\def\para#1{\vskip .4\baselineskip\noindent{\bf #1}}
\newtheorem{thm}{Theorem}[section]
\newtheorem{deff}{Definition}[section]
\newtheorem{lem}{Lemma}[section]
\newtheorem{prop}{Proposition}[section]
\theoremstyle{definition}
\newtheorem{hyp}[thm]{Hypotheses}
\theoremstyle{remark}
\newtheorem{rem}{Remark}
\numberwithin{equation}{section}
\newcommand{\eps}{\varepsilon}
\newcommand{\A}{\mathcal{A}}
\newcommand{\B}{\mathcal{B}}
\newcommand{\D}{\mathcal{D}}
\newcommand{\F}{\mathcal{F}}
\newcommand{\E}{\mathbb{E}}
\newcommand{\N}{\mathbb{N}}
\newcommand{\0}{\mathcal{O}}
\newcommand{\PP}{\mathbb{P}}
\newcommand{\R}{\mathbb{R}}
\newcommand{\abs}[1]{\left\vert#1\right\vert}
\newcommand{\norm}[1]{\left\Vert#1\right\Vert}
\numberwithin{equation}{section}
\newcommand{\lo}{\text{loc}}
\newcommand{\bed}{\begin{displaymath}}
\newcommand{\eed}{\end{displaymath}}
\newcommand{\bea}{\bed\begin{array}{rl}}
\newcommand{\eea}{\end{array}\eed}
\newcommand{\ad}{&\!\!\!\disp}
\newcommand{\barray}{\begin{array}{ll}}
\newcommand{\earray}{\end{array}}
\def\disp{\displaystyle}
\newcommand{\1}{\boldsymbol{1}}
\def\bar{\overline}
\def\hat{\widehat}
\def\a.s{\text{\;a.s.\;}}
\begin{document}
	
\begin{frontmatter}
	
	\title{Stochastic Lotka-Volterra Competitive Reaction-Diffusion Systems Perturbed by Space-Time White Noise: Modeling and
		Analysis\tnoteref{mytitlenote}}
	\tnotetext[mytitlenote]{This research was supported in part by the National Science Foundation under grant DMS-2114649.}
		
	\author[myaddress]{Nhu N. Nguyen}
	\ead{nguyen.nhu@uconn.edu}
	\author[myaddress]{George Yin\corref{mycorrespondingauthor}}
	\cortext[mycorrespondingauthor]{Corresponding author}
	\ead{gyin@uconn.edu}
	
	\address[myaddress]{Department of Mathematics, University of Connecticut, Storrs, CT
		06269, USA}

\begin{abstract}
Motivated by the traditional
Lotka-Volterra competitive models, this paper proposes and analyzes a class of stochastic reaction-diffusion partial differential equations. In contrast to the models in the literature, the new formulation enables spatial dependence of the species. In addition, the noise process is allowed to be space-time white noise.
In this work, well-posedness, regularity of solutions,
existence of density, and existence of an invariant measure for stochastic reaction-diffusion systems with non-Lipschitz and non-linear growth coefficients and multiplicative noise are considered. By combining the random field approach and infinite integration theory approach in SPDEs for mild solutions, analysis is carried out. Then this paper develops a Lotka-Volterra competitive system under general setting; longtime properties are studied with the help of newly developed tools in stochastic calculus.
	\end{abstract}
	
	\begin{keyword}
		Stochastic reaction-diffusion equation,
		population dynamics,
		stochastic partial differential equation,
		Lotka-Volterra competitive model,
		well postdness,
		regularity,
		invariant measure.
		\MSC[2010] 60H15, 60H30, 	60H40, 92D15, 92D25, 92D40.
	\end{keyword}
	
\end{frontmatter}

\section{Introduction}
Motivated by the classical Lotka-Volterra competitive model introduced  in 1925 by Lotka \cite{Lot25},
\begin{equation}\label{int-eq-1}
\begin{cases}
\dfrac{dU(t)}{d t}=U(t)\left(m_1-a_1 U(t)-b_1V(t)\right),\ t\geq 0,\\[1.5ex]
\dfrac{d V(t)}{d t}=V(t)\left(m_2-a_2 V(t)-b_2U(t)\right),\ t\geq 0,\\
U(0)=U_0, V(0)=V_0,
\end{cases}
\end{equation}
much effort has been devoted to studying
 and generalizing
 this type of equations in different directions.
In \eqref{int-eq-1},
$U(t),V(t)$ are the densities of competing species at time $t$;
$m_1,m_2$ are
the birth rates;
$a_1$, $a_2$
represent the rates of self-limitation,
 and
$b_1$, $b_2$ account for the rates of competition.
The motivation for the study comes from ecology and biology. For detailed biological and ecological background related to \eqref{int-eq-1} and its variants,
see \cite{Kes88,Smi68}.

If one takes into consideration of the
spatial inhomogeneity,
\eqref{int-eq-1} can be generalized to
\begin{equation}\label{int-eq-1-1}
\!\!\!\!\!\begin{cases}
\dfrac{\partial U(t,x)}{\partial t}=\Delta U(t,x) +U(t,x)(m_1(x)-a_1(x)U(t,x)-b_1(x)V(t,x)),
\ \R_+\times (0,1)
\\[2ex]
\dfrac{\partial V(t,x)}{\partial t}=\Delta V(t,x) + V(t,x)(m_2(x)-a_2(x)V(t,x)-b_2(x)U(t,x)),
\ \R_+\times(0,1)
\\[2ex]
\dfrac{\partial U}{\partial x}(t,0)=\dfrac{\partial U}{\partial x}(t,1)=\dfrac{\partial V}{\partial x}(t,0)=\dfrac{\partial V}{\partial x}(t,1)=0,\ t\geq 0,\\
U(0,x)=U_0(x), V(0,x)=V_0(x),\ 0< x< 1,
\end{cases}
\end{equation}
where
$U(t,x),V(t,x)$ represent the densities of species at time $t$ and location $x$,
{\color{blue}$m_i(x)$, $a_i(x)$, $b_i(x)$, for $i=1,2$ are functions defined on $[0,1]$, and
$\Delta$ is the Laplace operator.}
Such a
model is the so-called reaction-diffusion equation in PDEs community and has received increasing attention lately.
For instant,
 in \cite{CL84, GL94}, the authors considered the existence and uniqueness of the coexistence states; the works \cite{HLM02,LN12} aimed to understand completely the dynamics of the system;
the work  \cite{HLM05} studied small diffusion cases;
the works  \cite{HN13,HN13-2,HN16,HN16-2,HN17,LZZ19} treated variants of \eqref{int-eq-1-1}.

Along another direction,  noises are added to \eqref{int-eq-1} to capture the random factors in the environment. The corresponding stochastic system becomes
\begin{equation}\label{int-eq-1-2}
\begin{cases}
dU(t)=U(t)\left(m_1-a_1U(t)-b_1V(t)\right)dt+\sigma_1 U(t)dB_1(t),\quad t\geq 0,\\
dV(t)=V(t)\left(m_2-a_2V(t)-b_2U(t)\right)dt+\sigma_2 V(t)dB_2(t),\quad t\geq 0,\\
U(0)=U_0, V(0)=V_0,
\end{cases}
\end{equation}
where $B_1(t)$ and $B_2(t)$ are real-valued standard Brownian motions, and $\sigma_1,\sigma_2$ are intensities of the noises.
The system is modeled and studied under stochastic differential equations (SDEs) framework.
Much effort has been devoted to studying \eqref{int-eq-1-2} such as well-posedness, positivity of solution, Markov-Feller property,
longtime dynamic behavior such as
existence and uniqueness of stationary distribution, coexistence and extinction, and optimal harvesting strategy; see e.g., \cite{DD11,DDY14,HS98,DY17,TY15} and reference therein.

In this work,
we propose a model that
captures features of both the random factors and the spatial inhomogeneity.
Let
$\frac{\partial ^2W_1(t,x)}{\partial t\partial x}$, $\frac{\partial ^2W_2(t,x)}{\partial t\partial x}$ be space-time white noises, to be defined
rigorously
in the subsequent section;
$m_i(x)$, $a_i(x)$, $b_i(x)$, and $\sigma_i(x)$, for $i=1,2$ be twice continuously differentiable functions in $[0,1]$,
and suppose that $m_i(x)$, $a_i(x)$, and $b_i(x)$ are non-negative.
  Consider
\begin{equation}\label{eq-main}
\begin{cases}
\dfrac{\partial U(t,x)}{\partial t}=\Delta U(t,x) +U(t,x)\left(m_1(x)-a_1(x)U(t,x)-b_1(x)V(t,x)\right)\\[1ex]
\hspace{4cm}+ \sigma_1(x)U(t,x)\dfrac{\partial ^2W_1(t,x)}{\partial t\partial x},\quad 0\leq x\leq 1, t\geq 0,\\[1ex]
\dfrac{\partial V(t,x)}{\partial t}=\Delta V(t,x) + V(t,x)(m_2(x)-a_2(x)V(t,x)-b_2(x)U(t,x))\\[1ex]
\hspace{4cm}+\sigma_2(x)V(t,x)\dfrac{\partial^2 W_2(t,x)}{\partial t\partial x},\quad 0\leq x\leq 1, t\geq 0,\\[1ex]
\dfrac{\partial U}{\partial x}(t,0)=\dfrac{\partial U}{\partial x}(t,1)=\dfrac{\partial V}{\partial x}(t,0)=\dfrac{\partial V}{\partial x}(t,1)=0,\quad t\geq 0,\\
U(0,x)=U_0(x), V(0,x)=V_0(x), \quad 0\leq x\leq 1.
\end{cases}
\end{equation}
The use of Neumann boundary condition is motivated by applications in biology and ecology, namely, the population will not leave a finite domain.

Our results can be summarized as follows.
After modeling the system as a stochastic reaction-diffusion system perturbed by space-time white noise under a stochastic partially differential equation (SPDE) framework, we give a full analysis.
The well-posedness of the problem (existence, uniqueness, positivity, and continuous dependence on initial data of the solution)
is obtained first.
In contrast to many existing works,
we do not require
the coefficients being Lipschitz,
neither do we use linear growth condition. As a result, this part is also interesting in its own right from a SPDEs theory point of view.
Then, the regularity of the solution is investigated. It is shown that in any compact interval not including $0$, the solution satisfies the classical regularity, namely, H\"older continuous in the time variable with any exponent $<1/4$ and H\"older continuous in the space variable with any exponent $<1/2$, while on compact interval containing $0$, the H\"older continuity only holds with exponent $<1/2\wedge \alpha$ in space and with exponent $<1/4\wedge\alpha/2$ in time provided that the initial value is $\alpha$-H\"older continuous.
Analytic and probabilistic representations of heat kernel are used in the proof.
Next, using the Malliavin calculus, the absolute continuity with respect to Lebesgue measure of the law of the solution is proved and then the existence of density is obtained.
The longtime behavior is  also studied.
We prove the existence of an invariant measure.
Then, we consider an important problem in biology and ecology, namely, the coexistence and the extinction.
Some ideas and methods for the study
of this longtime property
are introduced and a first attempt is given by  using the newly developed mild stochastic calculus.
An overview of the results, ideas, and methods of this point in other (simpler) frameworks and the difficulties in our own system are also discussed carefully.
Finally,
we  extend our results to high dimensional setting by
injecting
``color'' (or correlation) into the space-time white noise for the trade off of the regularity of the noise and the dimension of space.
The noise driving the equation in higher dimensional space will be white in time and colored in space. Nevertheless, one  need not require the use of finite-trace covariance.

Regarding the novelty,
this paper is one of the first works on modeling and analysis of the competitive models in
biological system when both the spatial inhomogeneity and the random noises are taken into consideration.
Note that the systems in SPDEs setting cannot be investigated by simply
combining SDEs and PDEs.
For example, the stochastic integral with respect to space-time white noise requires
integrating over the time and space variables simultaneously. Roughly, if we frozen the time, it looks like a Bochner integral while if the space variable is frozen, it turns out to be an integral in the It\^o sense.
However, putting
them together will be different from considering and analyzing each of them separately. As a result, the analysis is much difficult compared with the existing results in either
SDEs or PDEs setting.

Our work contributes to both
 the development of stochastic reaction-diffusion equations and particular applications to Lotka-Volterra systems.
We consider well-posedness of the problem, regularity of the solutions, existence of density, existence of an invariant measure for a stochastic reaction-diffusion system with non-Lipschitz and non-linear growth coefficients and multiplicative noise.
Moreover, we use a unified approach by combining
the random
field
approach and the infinite integration theory approach in SPDEs for mild solutions.
Each of the approaches has its own advantage and is suitable for different purposes. From
 an application point of view, this paper models and analyzes the Lotka-Volterra competitive system in a  more general setting.
 The longtime properties are also studied with the help of newly developed tools in stochastic calculus.
We hope this work will open up a new window
 for studying biological systems as well as the applications of SPDEs in mathematical biology.

The rest of paper is organized as follows.
Section \ref{sec:for} provides the
formulation of our problem.
The well-posedness of the problem is given in Section \ref{sec:wel} while Section \ref{sec:reg} is devoted to the regularity of the solution.
The existence of the density of the law of the solution is obtained in Section \ref{sec:den}.
Section \ref{sec:inv} considers the existence of an invariant measure.
Section \ref{sec:lon} is devoted to the coexistence and extinction in the competitive model.
We extend our results to higher dimension in Section \ref{sec:hig}.
Section \ref{sec:con} concludes our paper. Finally, an appendix containing some notation
and  results together with relevant literature is provided at the end of the paper to help the reading, which includes
infinite-dimensional integration theory,
random field approach, equivalence of the two different approaches, and the Malliavin calculus.

\section{Formulation}\label{sec:for}
The driving noise  we consider has two parameters, space and time.
There are several ways to construct stochastic partial differential equations (SPDEs)
with respect to such noises.
The theory of SPDEs was developed  based on the random field approach by Walsh in \cite{Wal86}, and was dealt with
using
stochastic evolution in Hilbert space  by Da Prato and Zabczyk in \cite{PZ92}.
In the results developed by Walsh,  stochastic integrals are defined with respect to martingale measures, whereas in the work of Da Prato and Zabczyk, stochastic integrals are taken with respect to Hilbert space-valued Wiener processes.
These two approaches lead to the developments of two distinct schools of study for SPDEs, both of which have advantages in their own rights.

In this paper, we prove that the solutions in the two approaches
 for the systems that we are interested in are equivalent. Then we treat the solution in each sense exchangeably whichever is more convenient for us
 under different scenarios.
Unifying and using both approaches is one of our main ideas here and allows us to give a full analysis of the systems of interest.
For easy references on the aforementioned approaches, we collect some notation and preliminary results in the appendix.

To proceed,
let us formulate our problem.
Let $L^2((0,1),\R)$ be the Hilbert space with usual inner product and $C([0,1],\R)$ be the Banach space of continuous functions with the sup-norm.
 Denote by
 $H=L^2((0,1),\R^2)$ and $E=C([0,1],\R^2)$ the Hilbert space and Banach space, respectively, endowed with the inner product and the norm as follows
$$
\langle h,g\rangle_H=
\langle (h_1,h_2), (g_1,g_2)\rangle_{H}:=\sum_{i=1}^2 \langle h_i,g_i\rangle_{L^2((0,1),\R)},
$$
and
$$
|u|_E=|(u_1,u_2)|_{E}:=\sup_{x\in [0,1]}\sqrt{u_1^2(x)+u_2^2(x)}.
$$
Let $\big\{\Omega, \F,\{\F_t\}_{t\geq 0},\PP\big\}$ be a complete probability space and $L^p(\Omega;C([0,T],E))$ (resp. $L^p(\Omega;C([0,T],H))$) be the subspace of predicable process $u$, which take values in $C([0,T],E)$ (resp. $C([0,T],H)$) a.s. with the norm
$$\abs{u}^p_{L_{t,p}}:=\E \sup_{s\in [0,t]}\abs{u(s)}^p_{E},\quad (\text{resp. }\abs{u}^p_{L_{t,p}(H)}:=\E \sup_{s\in [0,t]}\abs{u(s)}^p_{H}).$$
For $\eps>0,p\geq 1$, denote by $W^{\eps,p}((0,1),\R^2)$ the Sobolev-Slobodeckij space (the Sobolev space with non-integer exponent) endowed with the norm
$$\abs{u}_{\eps,p}:=\abs{u}_{L^p((0,1),\R^2)}
+\sum_{i=1}^2\int_{(0,1)\times(0,1)}
\dfrac{\abs{u_i(x)-u_i(y)}^p}{\abs{x-y}^{\eps p+1}}dxdy.$$

\para{Neumann heat kernel and Neumann heat semi-group.}
Next, we denote by  $G_t(x,y)$ the fundamental solution of the heat equation on $\R_+\times (0,1)$ with the \textcolor{blue}{Neumann boundary condition}.
It is well known that $G_t(x,y)$ has an explicit form as follows
\begin{equation*}
\begin{aligned}
G_t(x,y)=\frac 1{\sqrt{4\pi t}}\sum_{n=-\infty}^{\infty}\Bigg[\exp\left(-\frac {(y-x-2n)^2}{4t}\right)+\exp\left(-\frac{(y+x-2n)^2}{4t}\right)\Bigg].
\end{aligned}
\end{equation*}
We  recall the following properties of the Neumann heat kernel; see e.g., \cite{BP98,Wal86}.
\begin{itemize}
\item There are some finite constants \textcolor{blue}{$c$} and \textcolor{blue}{$c'$} such that
\begin{equation}\label{prop-A1}
\textcolor{blue}{c} G_{t-s}(x,y)\leq \frac 1{\sqrt{2\pi (t-s)}}\exp\left(-\frac{|x-y|^2}{2(t-s)}\right)\leq \textcolor{blue}{c'}G_{t-s}(x,y),
\end{equation}
 where
 $$
\frac 1{\sqrt{2\pi (t-s)}}\exp\left(-\frac{|x-y|^2}{2(t-s)}\right)
$$
is the heat kernel.
\item
For each $q<3$, $T>0$, one has
\begin{equation}\label{prop-A2}
\sup_{(t,x)\in [0,T]\times [0,1]}\int_0^t \int_0^1 G_{t-s}^{q}(x,y)dyds<\infty.
\end{equation}
\end{itemize}
Moreover, let $e^{t\Delta_N}$ be a semigroup in $L^2((0,1),\R)$ defined by
$$
\left(e^{t\Delta_N}u\right)(x):=\int_0^1 G_t(x,y)u(y)dy.
$$
We recall some  properties of this semigroup as follows;  see \cite[Section 2.1]{Cer03} for more details.
\begin{itemize}
\item For any $t>0$, $\eps>0$, $p\geq 1$, $e^{t\Delta_N}$ maps $L^p((0,1),\R)$ into $W^{\eps,p}((0,1),\R)$ and
\begin{equation}\label{prop-S1}
\left|e^{t\Delta_N}u\right|_{\eps,p}\leq c(t\wedge 1)^{-\eps/2}|u|_{L^p((0,1),\R)},\quad \forall u\in L^p((0,1),\R),
\end{equation}
for some constant $c$ independent of $p$.
\item There is a constant $c$, independent of $u$ such that
\begin{equation}\label{prop-S2}
|e^{t\Delta_N}u|_{C([0,1],\R)}\leq c|u|_{C([0,1],\R)},\;\forall u\in C([0,1],\R).
\end{equation}
\end{itemize}
Moreover, we
often use the notation $e^{t\Delta_N}u$ for $u=(u_1,u_2)$,
in the following definition:
\begin{equation}\label{117-1}
e^{t\Delta_N}u:=\left(e^{t\Delta_N}u_1,e^{t\Delta_N}u_2\right).
\end{equation}
For simplicity of notation,
in the remaining of the paper, $e^{t\Delta_N}u$ with $u$ being a function taking $\R^2$ values, should be understood as in \eqref{117-1}.

\para{Space-time white driving noise.} Assume that $\{\beta_{1,k}(t)\}_{k=1}^\infty$, and $\{\beta_{2,i}(t)\}_{k=1}^\infty$ are two sequences of independent $\{\F_t\}_{t\geq 0}$-adapted one-dimensional standard Wiener processes. Now, let $\{e_k\}_{k=1}^{\infty}$ be a complete orthonormal system in $L^2((0,1),\R)$ including eigenfunctions of Neumann Laplace operator in $[0,1]$. It is seen that they are uniformly bounded. That is,
$$\sup_{k\in\N}\sup_{x\in[0,1]}\abs{e_k(x)}<\infty.$$
We define the standard cylindrical $Q$-Winner processes $W_i(t), i=1,2$  as follows
$$
W_i(t)=\sum_{k=1}^{\infty}\beta_{k,i}(t)e_k,\quad i=1,2.$$
In higher dimension, we will need to
use colored noise in space
to obtain more regularity
 but
do not need to require it be a finite-trace $Q$-Wiener process.
The detail
is discussed in Section \ref{sec:hig}.

\para{Definition of  solution.}
Now, we define a mild solution of \eqref{eq-main} as a process
$$\{Z(t,x):=(U(t,x), V(t,x)): t\geq 0,x \in (0,1)\}$$
 satisfying
\begin{equation}\label{eq-mild-random}
\begin{cases}
\displaystyle U(t,x)=\int_0^ 1G_t(x,y)U_0(y)dy\\
\hspace{1cm}+\displaystyle\int_0^t\int_0^1 G_{t-s}(x,y)U(s,y)\left(m_1(y)-a_1(y)U(s,y)-b_1(y)V(s,y)\right)dyds\\[1.5ex]
\hspace{1cm}+\displaystyle\int_0^t\int_0^1 G_{t-s}(x,y)\sigma_1(y)U(s,y)W_1(ds,dy),\\[1.5ex]
\displaystyle V(t,x)=\int_0^1G_t(x,y)V_0(y)dy\\
\hspace{1cm}+\displaystyle\int_0^t\int_0^1 G_{t-s}(x,y)V(s,y)\left(m_2(y)-a_2(y)V(s,y)-b_2(y)U(s,y)\right)dyds\\[1.5ex]
\hspace{1cm}+\displaystyle\int_0^t\int_0^1 G_{t-s}(x,y)\sigma_2(y)V(s,y)W_2(ds,dy),
\end{cases}
\end{equation}
where the stochastic integrals are in Walsh's sense with respect to the corresponding Brownian sheets of $W_1(t)$, $W_2(t)$ (denoted by $W_1(t,y), W_2(t,y)$ for simplicity of notation) as in Section \ref{sec-22} and \ref{sec-23};
or satisfying the following stochastic integral equation
\begin{equation}\label{eq-mild-infinite}
\begin{cases}
\displaystyle U(t)=e^{t\Delta_N}U_0+\int_0^t e^{(t-s)\Delta_N}U(s)\left(m_1-a_1U(s)-b_1V(s)\right)ds\\
\hspace{1.5cm}\displaystyle+\int_0^te^{(t-s)\Delta_N}\sigma_1U(s)dW_1(s),\\[2ex]
\displaystyle V(t)=e^{t\Delta_N}V_0+\int_0^t e^{(t-s)\Delta_N}V(s)
\left(m_2-a_2V(s)-b_2U(s)\right)
ds\\
\hspace{1.5cm}\displaystyle+\int_0^te^{(t-s)\Delta_N}\sigma_2V(s)dW_2(s),
\end{cases}
\end{equation}
where
the stochastic integrals, in which $\sigma_1U(s)$ and $\sigma_2V(s)$
as multiplication operators, are defined as in infinite-dimensional integration theory in Section \ref{sec-21}
and
$U(t)=U(t,x)$, $V(t)=V(t,x)$, $m_i=m_i(x)$, $a_i=a_i(x)$, $b_i=b_i(x)$, $\sigma_i=\sigma_i(x)$ ($i=1,2$) are understood as elements in a Hilbert space $L^2((0,1),\R)$.

As we discussed in Section \ref{sec-23}, these solutions (in the sense of \eqref{eq-mild-random} and of \eqref{eq-mild-infinite}) are equivalent if one of them exists uniquely and has continuous version and finite moment (it will be shown in Section \ref{sec:wel}).
Because of this equivalence, we will use these forms exchangeably depending on our purposes.
To prove the existence and uniqueness of the solutions, to examine their longtime behavior, or to obtain estimates in functional spaces, the solution in the sense of infinite-dimensional theory \eqref{eq-mild-infinite} will be used. To investigate the
the regularity of solution and its distribution or to estimate pointwise, we use the solution in the sense of random field approach \eqref{eq-mild-random}.

In the rest of paper, we often denote the functionals $F_1(U,V)$, $F_2(U,V)$ as the drift terms of \eqref{eq-main}.
For $u(x), v(x) \in L^2((0,1),\R)$, we say $u\geq 0$ if $u(x)\geq 0$ almost everywhere $x\in(0,1)$; and $u\geq v$ if $u(x)\geq v(x)$ almost everywhere $x\in (0,1)$.
Since we are treating a system motivated from ecological system and mathematical biology, we are only interested in ``non-negative mild solution", i.e., the mild solution $(U(t),V(t))$ satisfying $U(t)\ge 0$ and $V(t)\ge 0$ for all $t\geq 0$ \a.s
Moreover, operations with respect to vectors are understood in the usual sense although we will often write them in row instead of in column because of the simplicity of notations.
Throughout this paper, the letter $c$  denotes a generic finite positive constant whose values may change in
different occurrences. We will write the dependence of the constants on parameters explicitly when it is needed.

\section{Well-posedness}\label{sec:wel}
\para{Regularity of stochastic integral.}
To start,
it is similar to \cite{Cer03},
we need the following proposition, which shows the regularity of the stochastic integral.

\begin{prop}\label{s3-prop-1}
Denote by $\gamma$ the mapping
\begin{equation}\label{eq-gamma}
\gamma(u)(t):=\left(\int_0^t e^{(t-s)\Delta_N}\sigma_1u_1(s)dW_1(s);\int_0^t e^{(t-s)\Delta_N}\sigma_2u_2(s)dW_2(s)\right),
\end{equation}
for $u=(u_1,u_2)\in L^p(\Omega;C([0,T],E))$.
There is $p_*$ such that for all $p\geq p_*$, $\gamma$ maps $ L^p(\Omega;C([0,T],E))$ into itself and for any $u,v\in  L^p(\Omega;C([0,T],E))$
\begin{equation}\label{s3-eq-gamma}
|\gamma(u)-\gamma(v)|_{L_{T},p}\leq c_p(T)|u-v|_{L_{T,p}},
\end{equation}
for some function $c_p(T)$ satisfying $c_p(T)\downarrow 0$ as $T\downarrow 0$.
\end{prop}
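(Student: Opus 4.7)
The plan is to use the stochastic factorization method of Da Prato--Kwapie\'{n}--Zabczyk, which converts the stochastic convolution into a deterministic integral of an auxiliary random process and thereby reduces the control of $\sup_{t\in[0,T]}$ to standard H\"older/Sobolev estimates. By linearity of the stochastic integral, $\gamma(u)-\gamma(v)=\gamma(u-v)$, so with $w:=u-v$ it suffices to establish
\begin{equation*}
|\gamma(w)|_{L_{T,p}}\leq c_p(T)\,|w|_{L_{T,p}},
\end{equation*}
with $c_p(T)\downarrow 0$ as $T\downarrow 0$.

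First I would fix $\alpha\in(0,1/4)$ and invoke the factorization identity, writing each component of $\gamma(w)(t)$ as
\begin{equation*}
\frac{\sin(\pi\alpha)}{\pi}\int_0^t (t-s)^{\alpha-1}e^{(t-s)\Delta_N}Y_i(s)\,ds,\qquad
Y_i(s):=\int_0^s(s-r)^{-\alpha}e^{(s-r)\Delta_N}\sigma_i w_i(r)\,dW_i(r).
\end{equation*}
Next, using the Walsh random-field representation, the Burkholder--Davis--Gundy inequality, the bound $\int_0^1 G_{s-r}^2(x,y)\,dy\leq c(s-r)^{-1/2}$ (which follows from \eqref{prop-A1} and Gaussian integration), and boundedness of $\sigma_i$ on $[0,1]$, I would obtain the pointwise moment bound
\begin{equation*}
\E|Y_i(s,x)|^p\leq c_p\,\E\Bigl(\int_0^s (s-r)^{-2\alpha-1/2}|w(r)|_E^2\,dr\Bigr)^{p/2}\leq c_p\,s^{\beta}|w|_{L_{s,p}}^p,
\end{equation*}
with a positive exponent $\beta=\beta(\alpha,p)$ coming from H\"older's inequality in $r$; integrating in $x$ yields the same estimate for $\E|Y_i(s)|_{L^p((0,1),\R)}^p$. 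Applying the smoothing inequality \eqref{prop-S1} to map $Y_i(s)$ into $W^{\eps,p}((0,1),\R)$ at the cost of $((t-s)\wedge 1)^{-\eps/2}$, then the Sobolev embedding $W^{\eps,p}\hookrightarrow C([0,1],\R)$ (valid when $\eps p>1$), followed by H\"older's inequality in $s\in[0,t]$, leads to a bound on $\E\sup_{t\in[0,T]}|\gamma(w)(t)|_E^p$ of the form $c\,T^{\kappa}|w|_{L_{T,p}}^p$ with $\kappa>0$.

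The hard part will be balancing the four parameters $\alpha,\eps,p,\kappa$ simultaneously, since the chain of estimates forces (i) $\alpha<1/4$ so that the inner BDG integrand $(s-r)^{-2\alpha-1/2}$ is integrable, (ii) $\eps p>1$ for the Sobolev embedding into the sup-norm, (iii) $\alpha>\eps/2+1/p$ so that the outer factorization integral $\int_0^T s^{(\alpha-1-\eps/2)p/(p-1)}\,ds$ converges, and (iv) a strictly positive residual power of $T$ remains. Choosing $p\geq p_*$ large enough opens the non-empty window $\eps/2+1/p<\alpha<1/4$, which is precisely what defines the threshold $p_*$ in the statement; once these inequalities are made compatible, the remaining computations are routine and yield the required $c_p(T)\downarrow 0$.
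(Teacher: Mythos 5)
Your proposal is correct and follows essentially the same route as the paper: the Da Prato--Kwapie\'n--Zabczyk factorization, the BDG/Walsh estimate for the auxiliary process $Y_\alpha$ using $\int_0^1 G_{s-r}^2(x,y)\,dy\leq c(s-r)^{-1/2}$, the smoothing bound \eqref{prop-S1}, the Sobolev embedding $W^{\eps,p}\hookrightarrow C([0,1])$, and the same parameter window $1/p<\eps<2(\alpha-1/p)$, $\alpha<1/4$ for $p\geq p_*$. The only cosmetic difference is that you invoke linearity to reduce to $\gamma(u-v)$ at the outset, which the paper does implicitly.
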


\begin{proof}
Let $p_*$ be sufficiently large such that for any $p\geq p_*$,
we can choose simultaneously $\alpha,\eps>0$ satisfying
\begin{equation}\label{cond-epsbeta}
\frac 1p<\alpha<\frac 14\quad\text{and}\quad \frac 1p<\eps<2\big(\alpha-\frac 1p\big).
\end{equation}
By a factorization argument (see e.g., \cite[Theorem 8.3]{PZ92}), one has
\begin{equation}\label{s3-prop1-eq0}
\gamma(u)(t)-\gamma(v)(t)=\frac{\sin \pi\alpha}{\pi}\int_0^t(t-s)^{\alpha-1}
e^{(t-s)\Delta_N}Y_\alpha(u,v)(s)ds,
\end{equation}
where
$$
Y_\alpha(u,v)(s):=\int_0^s (s-r)^{-\alpha}e^{(s-r)\Delta_N}(u(r)-v(r))\sigma dW(r),
$$
and we shortened the notation by convention that for $u=(u_1,u_2)$
$$
\begin{aligned}
\int_0^t&e^{(t-s)\Delta_N} u(s)\sigma dW(s)\\
&:=\Big(\int_0^t e^{(t-s)\Delta_N}u_1(s)\sigma_1dW_1(s),\int_0^t e^{(t-s)\Delta_N}u_2(s)\sigma_2dW_2(s)\Big).
\end{aligned}
$$
Applying \eqref{prop-S1} and H\"oder's inequality to \eqref{s3-prop1-eq0} yields that for any $\eps<2(\alpha-1/p)$
\begin{equation}\label{s3-prop1-eq3}
\begin{aligned}
|\gamma(u)(t)&-\gamma(v)(t)|_{\eps,p}\\
&\leq c_\alpha\int_0^t ((t-s)\wedge 1)^{\alpha-1-\eps/2}|Y_\alpha(u,v)(s)|_{L^p((0,1),\R^2)}ds\\
&\leq c_\alpha\Big(\int_0^t (s\wedge 1)^{\frac{p}{p-1}(\alpha-1-\eps/2)}ds\Big)^{\frac{p-1}p}\Big(\int_0^t\left|Y_\alpha(u,v)(s)\right|^p_{L^p((0,1),\R^2)}ds\Big)^{\frac 1p}.
\end{aligned}
\end{equation}

We proceed to estimate $Y_{\alpha}(u,v)(s)$. First, let
$$
Y_{\alpha}^1(u,v)(s):=\int_0^s(s-r)^{-\alpha} e^{(s-r)\Delta_N}(u_1(r)-v_1(r))\sigma_1 dW_1(r).
$$
It is noted that
the stochastic convolution $\int_0^s e^{(s-r)\Delta_N}\Phi(r)dW(r)$ (for some process $\Phi$ such that the integral is well defined) is not a martingale with respect to $s$ in general.
However, if we frozen $s$ and consider the sequence $\int_0^{s'} e^{(s-r)\Delta_N}\Phi(r)dW(r)$ with respect to $s'\in [0,s]$, then it is a martingale.
Taking this idea, by Burkholder-Davis-Gundy inequality, we get
\begin{equation}\label{s3-prop1-eq1}
\begin{aligned}
\E&|Y_{\alpha}^1(u,v)(s,x)|^p\\
&\leq c\E\bigg(\int_0^s(s-r)^{-2\alpha}\sum_{k=1}^{\infty} \Big(\int_0^1 G_{s-r}(x,y)(u_1(r,y)-v_1(r,y))e_k(y)dy\Big)^2dr\bigg)^{\frac p2}\\
&= c\E\bigg(\int_0^s(s-r)^{-2\alpha}\sum_{k=1}^\infty \langle G_{s-r}(x,\cdot)(u_1(r,\cdot)-v_1(r,\cdot)), e_k(\cdot)\rangle_{L^2((0,1))}^2dr\bigg)^{\frac p2}\\
&=c\E\bigg(\int_0^s (s-r)^{-2\alpha}\left|G_{s-r}(x,\cdot)(u_1(r,\cdot)-v_1(r,\cdot))
\right|_{L^2((0,1))}^2dr\bigg)^{\frac p2}
\end{aligned}
\end{equation}
because of  Parseval's identity.
Moreover,
\begin{equation}\label{s3-prop1-eq2}
\begin{aligned}
\big|G_{s-r}(x,\cdot)&(u_1(r,\cdot)-u_2(r,\cdot))\big|_{L^2((0,1))}^2\\
&\leq |u_1(r)-v_1(r)|^2_{C([0,1],\R)}\int_0^1 G_{s-r}^2(x,y)dy\\
&\leq c|u_1(r)-v_1(r)|^2_{C([0,1],\R)}(s-r)^{-\frac 12}\quad(\text{due to \eqref{prop-A1}}).
\end{aligned}
\end{equation}
The second component $\E|Y_{\alpha}^2(u,v)(s,x)|^p$ is estimated similarly. Hence,
combining \eqref{s3-prop1-eq1} and \eqref{s3-prop1-eq2}
allows us to obtain that
\begin{equation}\label{s3-prop1-eq4}
\E|Y_{\alpha}(u,v)(s,x)|^p\leq c |u-v|^p_{L_{s,p}}\left(\int_0^s (s-r)^{-(2\alpha+\frac 12)}dr\right)^{\frac p2}.
\end{equation}
By the Sobolev embedding theorem, $W^{\eps,p}((0,1))$ is embedded into $C([0,1])$ if $\eps>1/p$. Hence, we deduce from \eqref{s3-prop1-eq3} and \eqref{s3-prop1-eq4} that  $\gamma$ maps $ L^p(\Omega;C([0,T],E))$ into itself,
 and
$$
|\gamma(u)-\gamma(v)|_{L_{t,p}}\leq c_p(t)|u-v|_{L_{t,p}},
$$
where
$$
c_p(t)=c_\alpha\Big(\int_0^t (s\wedge 1)^{\frac{p}{p-1}(\alpha-1-\eps/2)}ds\Big)^{\frac{p-1}p}\Big(\int_0^t\Big(\int_0^s (s-r)^{-(2\alpha+\frac 12)}dr\Big)^{\frac p2}ds\Big)^{\frac 1p}
$$
satisfying $c_p(t)\downarrow 0$ as $t\downarrow 0$ due to \eqref{cond-epsbeta}.
\end{proof}

\para{Existence and uniqueness of solutions.}
Since the coefficients are non-Lipschitz and non-linear growth, the existence and uniqueness of the mild solution are not obvious as usual. Although the existence of the mild solution of stochastic reaction-diffusion equations with non-Lipsschitz terms has been obtained in \cite{Cer03}, we cannot apply the results in this paper because
the coefficients in \cite{Cer03} are required to satisfy either some suitable growth conditions \cite[Hypothesis 4 and Theorem 5.3]{Cer03} or condition \cite[(5.17)]{Cer03}. That is, either the drift term has growth rate of power $m$ (for some $m>0$)
 resulting in the diffusion term having growth rate of at most power $\frac 1 m$ or the drift decays outside large balls, which
are needed to guarantee the (uniform) boundedness of the sequence of truncated solutions.
These conditions are not satisfied in our model since
the drift has polynomial growth of degree $2$,  while the diffusion term has linear growth. Moreover, the drift term in our own system only satisfies \cite[(5.17)]{Cer03} if we assume further conditions
 such as $a_1(x),a_2(x)$ are uniformly bounded below by positive numbers;
see Section \ref{sec:inv}.

Given the problem mentioned above,
 we proceed as follows. We use the truncation method as in \cite{Cer03} to truncate the coefficients in compact balls so that it is Lipschitz continuous and linear growth, and
we define the solution using the truncation.
The non-negativity of truncated solutions will be obtained
in the next step. Then, the uniform boundedness of sequence of truncated solutions is obtained by using the idea of ``ignoring negative terms in the drift".
The detail is in the next Theorem.

\begin{thm}\label{thm-exi}
For any initial data $0\leq U_0,V_0$ with $(U_0,V_0)\in E$, there exists a unique mild solution $Z(t)=(U(t),V(t))$ of \eqref{eq-mild-infinite} in $L^p(\Omega; C([0,T],E))$ for any $T>0$, $p\geq 1.$
Moreover, $U(t),V(t)\geq 0, \forall t\geq 0\a.s$
\end{thm}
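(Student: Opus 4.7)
The plan is to follow the three-step strategy indicated in the excerpt: construct local mild solutions to a truncated system that has globally Lipschitz and bounded coefficients, establish pathwise non-negativity of these truncated solutions, and then exploit the sign of the competitive drift to deduce a uniform-in-$n$ moment bound that allows the truncation to be removed.

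\emph{Step 1 (truncation and local existence).} For $n\in\N$ I would introduce a smooth cutoff $\theta_n:\R_+\to[0,1]$ equal to $1$ on $[0,n]$ and vanishing outside $[0,n+1]$, and set $F^{(n)}(u):=\theta_n(|u|_E)F(u)$, where $F=(F_1,F_2)$ is the drift of \eqref{eq-main}. Then $F^{(n)}:E\to E$ is bounded and globally Lipschitz. The mild map
$$
\Gamma_n(u)(t):=e^{t\Delta_N}Z_0+\int_0^te^{(t-s)\Delta_N}F^{(n)}(u(s))\,ds+\gamma(u)(t)
$$
is a contraction on $L^p(\Omega;C([0,T_0],E))$ for $p\geq p_*$ and $T_0$ sufficiently small: by \eqref{prop-S2} the deterministic-drift piece contributes at most $T_0\cdot\mathrm{Lip}(F^{(n)})\,|u-v|_{L_{T_0,p}}$, while the stochastic-convolution piece is handled by Proposition \ref{s3-prop-1} with $c_p(T_0)\downarrow 0$. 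Concatenating intervals of length $T_0$ yields a unique truncated solution $Z_n\in L^p(\Omega;C([0,T],E))$ for every $T>0$.

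\emph{Step 2 (non-negativity of $Z_n$).} Because each component $U_n,V_n$ appears as a factor in both the drift and the multiplicative noise of its own equation, $0$ is morally absorbing. To make this rigorous I would apply an infinite-dimensional It\^o formula to a smooth convex approximation $\varphi_\delta$ of $r\mapsto(r^-)^2$ acting on $\int_0^1\varphi_\delta(U_n(t,x))\,dx$, use the $U_n$ factor in both the reaction and the noise to obtain a closed linear energy estimate for $\int_0^1(U_n(t,x)^-)^2\,dx$, and conclude by Gronwall after taking $\delta\to 0$; an identical treatment handles $V_n$. An alternative route is via smooth-noise or Galerkin approximations where the classical parabolic comparison principle applies, followed by a passage to the limit using continuity of the mild solution map in its inputs. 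This step is the most delicate point, since mild solutions are not Hilbert-space semimartingales and the contribution of the Laplacian to the energy balance must be controlled carefully.

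\emph{Step 3 (uniform moment bound and globalization).} Once $U_n,V_n\geq 0$, the sign of $-a_i U^2-b_i UV$ is favorable. Since $a_i,b_i,m_i\geq 0$ and the Neumann heat semigroup preserves positivity, the mild formula yields the pointwise bound
$$
0\leq U_n(t,x)\leq e^{t\Delta_N}U_0(x)+\|m_1\|_\infty\int_0^te^{(t-s)\Delta_N}U_n(s)(x)\,ds+\gamma_1(Z_n)(t,x),
$$
with an analogous bound for $V_n$. Taking the supremum in $x$, using \eqref{prop-S2}, raising to the $p$-th power, taking $\E\sup_{t\leq T}$, and invoking Proposition \ref{s3-prop-1} with $v=0$ to absorb the stochastic term on a short interval, followed by Gronwall and time-iteration, produces
$$
\sup_n\E\sup_{t\in[0,T]}|Z_n(t)|_E^p\leq K_{T,p}<\infty.
$$
Setting $\tau_n:=\inf\{t\geq 0:|Z_n(t)|_E\geq n\}$, the truncation is inactive on $[0,\tau_n]$, so $Z_{n+1}=Z_n$ there by uniqueness of the truncated equations, and Chebyshev yields $\PP(\tau_n\leq T)\leq K_{T,p}/n^p\to 0$. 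Defining $Z(t):=Z_n(t)$ on $[0,\tau_n]$ delivers a global mild solution with the claimed moment bound and non-negativity. Pathwise uniqueness then follows by comparing any two solutions on $[0,\tau_n^{(1)}\wedge\tau_n^{(2)}]$, where each agrees with the unique $n$-truncated solution, and letting $n\to\infty$.
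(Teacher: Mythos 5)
Your proposal is correct and follows essentially the same route as the paper: truncate the drift to a globally Lipschitz map, solve the truncated equation by contraction using Proposition \ref{s3-prop-1}, establish non-negativity of the truncated solutions, derive a uniform-in-$n$ moment bound by discarding the non-positive terms $-a_iU^2-b_iUV$ in the drift and invoking positivity preservation of $e^{t\Delta_N}$, and remove the truncation via the stopping times $\zeta_n$ and Chebyshev. The only cosmetic differences are that the paper truncates by a pointwise radial retraction of $(u,v)$ in $\R^2$ (so $F_n$ stays a Nemytskii operator) rather than by a cutoff in the $E$-norm, and it delegates the non-negativity step to cited references rather than the energy/comparison argument you sketch; neither affects the structure of the argument.
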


\begin{proof}
First, we rewrite the coefficients by defining
\bea \ad f_1(x,u,v)=u\big(m_1(x)-a_1(x)u-b_1(x)v\big),\\
\ad f_2(x,u,v)=v\big(m_2(x)-a_2(x)v-b_2(x)u\big),\eea
where $f_i: [0,1] \times \R\times \R \rightarrow \R$.
For each $n\in\N$, $i=1,2$, we define
\[f_{n,i}(x,u,v):=
\begin{cases}
f_i(x,u,v)\quad \text{if}\quad\abs{(u,v)}_{\R^2}\leq n,\\
f_{i}\Big(x,\dfrac {nu}{\abs{(u,v)}_{\R^2}},\dfrac {nv}{\abs{(u,v)}_{\R^2}}\Big) \quad \text{if}\quad \abs{(u,v)}_{\R^2}> n.\\
\end{cases}
\]
For each $n$,
$f_n(x,\cdot,\cdot)=\big(f_{n,1}(x,\cdot,\cdot),f_{n,2}(x,\cdot,\cdot)\big):\R^2\rightarrow \R^2$ is Lipschitz continuous,
uniformly with respect to $x\in [0,1]$, so that the composition operator $F_n(z)$ associated to $f_n$ (with $z(x)=(u(x),v(x))$) defined by
$$F_n(z)(x)=:\big(F_{n,1}(z)(x),F_{n,2}(z)(x)\big):=
\big(f_{n,1}(x,z(x)),f_{n,2}(x,z(x))\big), x\in[0,1],$$
is Lipschitz continuous in both $L^2((0,1), \R^2)$ and $C([0,1],\R^2)$.

We proceed to consider the following problem
\begin{equation}\label{Z_n}
dZ_n(t)=\big[\Delta_N Z_n(t)+F_n(Z_n(t))\big]dt+\sigma Z_n(t)dW(t), \quad Z_n(0)=(U_0,V_0),
\end{equation}
where $Z_n(t)=\big(U_n(t),V_n(t)\big)$, $\Delta_N Z_n(t):=\big(\Delta_N U_n(t),\Delta_N V_n(t)\big)$, and $\Delta_N$ is the Laplacian
together with the Neumann boundary condition and
$$\sigma Z_n(t)dW(t):=
\big(\sigma_1U_n(t)dW_1(t),\sigma_2V_n(t)dW_2(t)\big).$$

\begin{lem}\label{exist-lipchitz}
For any initial condition $(U_0,V_0)\in E$,
\eqref{Z_n} has a unique mild solution;
the solution is in $ L^p(\Omega;C([0,T],E))$ for any $p\geq p_*$ and $T>0$.
\end{lem}

\begin{proof}
Since the coefficients in \eqref{Z_n} are Lipschitz continuous and because of Proposition \ref{s3-prop-1}, by contraction mapping argument  \cite[Proof of Theorem 3.1]{NY20} or \cite{PZ92}, we obtain that equation \eqref{Z_n} admits a unique mild solution $Z_n(t)=(U_n(t),V_n(t))\in L^p(\Omega; C([0,T_0],E))$ for some sufficiently small $T_0$. Therefore, for any finite $T>0$, there is a unique mild solution of \eqref{Z_n} in $L^p(\Omega; C([0,T],E))$ by repeating the arguments in $[T_0,2T_0]$, $[2T_0,3T_0]$, and so on.
\end{proof}

Next, we
prove the non-negativity of $U_n(t),V_n(t).$

\begin{lem}\label{positivity}
For any  initial condition $0\leq U_0,V_0$, $(U_0,V_0)\in E$, one has $U_n(t),V_n(t)\geq 0,\;\forall t\in[0,T] \a.s$
\end{lem}

\begin{proof}
The proof is similar to the proof of \cite[Lemma 3.1]{NY20} or \cite[Lemma 3.2]{NNY18}.
\end{proof}

 We are in a position to show that the sequence $\{Z_n\}_{n=1}^{\infty}$ is uniformly bounded. The result is in
 the following lemma.

\begin{lem}\label{lem-bounded}
For all $n\in\N$,
\begin{equation}\label{bounded}
\E \sup_{s\in [0,t]}\abs{Z_n(s)}_{E}^p\leq c_p(t)\big(1+\abs{Z_0}_{E}^p\big),
\end{equation}
where $c_{p}(t)$ is a positive constant depending on $p$ and $t$, but independent of $n$.
\end{lem}

\begin{proof}
 By the definition of mild solution, we have
\begin{equation*}
\begin{aligned}
U_n(t)(x)&=\left(e^{t\Delta_N}U_0\right)(x)+\left(\int_0^t e^{(t-s)\Delta_N}F_{n,1}(U_n(s),V_n(s))ds\right)(x)+W_{U_n}(t)(x),
\end{aligned}
\end{equation*}
where $W_{U_n}(t):=\int_0^t e^{(t-s)\Delta_N}\sigma_1U_n(s)dW_1(s).$
Since $e^{t\Delta_N}$  is positivity preserving and $U_n(t), V_n(t)$ are non-negative, by definition of $F_{n,1}$ and \eqref{prop-S2}, we obtain
\begin{equation}\label{U_n^3}
\begin{aligned}
&\abs{U_n(t)}_{C([0,1],\R)}
\\&=\sup_{x\in[0,1]}\Big[\left(e^{t\Delta_N}U_0\right)(x)+\left(\int_0^te^{(t-s)\Delta_N}F_{n,1}(U_n(s),V_n(s))ds\right)(x)+W_{U_n}(t)(x)\Big]
\\&\leq \sup_{x\in[0,1]}\Big[\left(e^{t\Delta_N}U_0\right)(x)+\left(\int_0^{t}e^{(t-s)\Delta_N}U_n(s)m_1ds\right)(x)+W_{U_n}(t)(x)\Big]
\\&\leq c(t)\Big(\big|U_0\big|_{C([0,1],\R)}+\int_0^t\Big|U_n(s)\Big|_{C([0,1],\R)}ds
+\Big|W_{U_n}(t)\Big|_{C([0,1],\R)}\Big),
\end{aligned}
\end{equation}
where $c(t)$ is a constant depending only on $t$ and independent of $n$.

There is a small $t_0>0$ such that
$$
c^\gamma_{p}(t_0)c_0(t_0)<\frac 12,
$$
where $c^\gamma_{p}(t_0)$ is the constant in \eqref{s3-eq-gamma} in Proposition \ref{s3-prop-1} and $c_0(t_0)$ is the constant in the last line of \eqref{U_n^3}.
Hence, we obtain from \eqref{U_n^3} and Proposition \ref{s3-prop-1} that
\begin{equation}
\E\sup_{s\in[0,t_0]}\abs{Z_n(s)}^p_{E}\leq c_p(t_0)\Big(\abs{Z_0}_{E}^p+\int_0^{t_0}\E\sup_{r\in [0,s]}\Big|Z_n(r)\Big|_{E}^pds\Big).
\end{equation}
Therefore, Gronwall's inequality implies that
$$\E\sup_{s\in[0,t_0]}\abs{Z_n(s)}^p_{E}\leq c_{p}(t_0)\big(1+\abs{Z_0}^p_{E}\big),$$
for some constant $c_{p}(t_0)$, independent of $n$.
To proceed, we can repeat the same arguments in the intervals $[t_0, 2t_0]$, $[2t_0,3t_0]$, and so on.
Thus the Lemma is proved.
\end{proof}

\para{Completion of the proof of Theorem \ref{thm-exi}.}
At this stage, we are able to define the solution using the truncation  \cite{Cer03} as follows.
For any $n\in\N$, we define
\begin{equation}\label{tau}
\zeta_n:=\inf\{t\geq 0: \abs{Z_n(t)}_{E} \geq n\},
\end{equation}
with the usual convention that $\inf \emptyset =\infty$, and define $\zeta=\sup_{n\in\N}\zeta_n.$ Then we have
$$\PP\{\zeta<\infty\}=\lim_{T\to\infty}\PP\{\zeta<T\},$$
and for each $T\geq 0$,
$$\PP\{\zeta\leq T\}=\lim_{n\to\infty}\PP\{\zeta_n\leq T\}.$$
For any fixed $n\in\N$ and $T\geq 0,$ it follows from Lemma \ref{lem-bounded} that
$$
\begin{aligned}
\PP\{\zeta_n\leq T\}&=\PP\Big\{\sup_{t\in [0,T]}\abs{Z_n(t)}^p_{E}\geq n^p\Big\}\\
&\leq \dfrac 1{n^p} \E \sup_{t\in [0,T]}\abs{Z_n(t)}^p_{E}\leq \dfrac{c_{p}(T)\big(1+\abs{Z_0}_{E}^p\big)}{n^p}.
\end{aligned}
$$
It leads to that $\PP\{\zeta_n\leq T\}$ goes to zero as $n\to\infty$ so $\PP\{\zeta=\infty\}=1.$ Hence, for any $t\geq 0$, and $\omega\in \{\zeta=\infty\}$, there exists an $n=n(\omega)\in \N$ such that $t\leq \zeta_n(\omega)$.
Thus we can define
\begin{equation}\label{wel-eq-def}
Z(t)(\omega):=Z_n(t)(\omega).
\end{equation}
We  need to show that
it
is well defined, i.e., for any $t\leq \zeta_n\wedge\zeta_m$,  $Z_n(t)=Z_m(t)$ a.s.
 This is because of the definitions of truncated coefficients and stopping times $\zeta_n,\zeta_m$.
The details of this argument can be found in
\cite[Theorem 5.3]{Cer03}.

Note that the process $Z(t)=(U(t),V(t))$ defined  above is a mild solution of \eqref{eq-main}.
Indeed, for any $t\geq 0$, $\omega \in \{\zeta=\infty\}$, there exists an $n\in\N$ such that $t\leq\zeta_n$ and
\begin{equation*}
\begin{aligned}
Z(t)&=Z_n(t)=e^{tA}Z_0+\int_0^t e^{(t-s)A}F_n(Z_n(s))ds+W_{Z_n}(t)
\\&=e^{tA}Z_0+\int_0^t e^{(t-s)A}F(Z(s))ds+W_Z(t),
\end{aligned}
\end{equation*}
{\color{blue}where $W_{Z_n}(t):=(W_{U_n(t)},W_{V_n}(t))$ and $W_Z(t)=(W_U(t),W_V(t))$.}
Moreover, if there exists another solution $\hat Z(t)$ of \eqref{eq-main},
it is not difficult to obtain that
$$Z(t\wedge \zeta_n)=\hat Z(t\wedge \zeta_n),\quad\forall n\in\N,t\geq 0.$$
Since $\zeta_n\to \infty$ as $n\to\infty$ a.s., we get $Z(t)=\hat Z(t).$
So, the solution is unique.
Finally, for any $p\geq 1, T>0$,
$$
\begin{aligned}
\sup_{t\in[0,T]}\abs{Z(t)}_{E}^p=\lim_{n\to\infty}\sup_{t\in[0,T]}\abs{Z(t)}_{E}^p \1_{\{T\leq\zeta_n\}}
=\lim_{n\to\infty}\sup_{t\in[0,T]}\abs{Z_n(t)}_{E}^p \1_{\{T\leq\zeta_n\}}.
\end{aligned}
$$
Hence, by the boundedness of $Z_n(t)$ in Lemma \ref{lem-bounded}, one has $Z(t)\in L^p(\Omega; C([0,T],E))$.
As a result,
we obtain that equation \eqref{eq-main} admits a unique mild solution $Z(t)=(U(t),V(t))\in L^p(\Omega; C([0,T],E))$.
The non-negativity of $U(t),V(t)$ follows
from that of $U_n(t),V_n(t)$.
\end{proof}

\para{Continuous dependence on initial data.}
To proceed, we prove that the solution depends continuously on initial data,
which is stated in the following Proposition.
This property  plays an important role in studying  the semigroup associated with the solution and its Feller property, which will be investigated in Section \ref{sec:inv}.

\begin{prop}\label{wel-prop-2}
The solution given in Theorem \ref{thm-exi}
depends continuously on initial data in the sense that
for any $T>0$, $p\geq 1$ the map
$
z\in E_+:=\{z=(u,v)\in E: u,v\geq 0\}\mapsto Z^z\in L^p(\Omega; C([0,T],E)),
$
$($where $Z^z(t)$ is the solution of \eqref{eq-main} with initial data $z$$)$
is continuous, uniformly on bounded sets of $E_+$.
\end{prop}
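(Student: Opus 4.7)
The plan is to mimic the localization scheme used in Theorem~\ref{thm-exi}: prove continuous dependence first for the truncated (globally Lipschitz) equation \eqref{Z_n}, then transfer it to the original equation by controlling the probability that either trajectory exits a large ball. Fix $T>0$, $p\geq 1$, and a bounded set $B\subset E_+$ with $\sup_{z\in B}|z|_E\leq R$. For each $n$ and $z\in B$, write $Z^z_n$ for the solution of \eqref{Z_n} with initial data $z$, and set $\zeta^z_n:=\inf\{t\geq 0:|Z^z_n(t)|_E\geq n\}$; by construction $Z^z(t)=Z^z_n(t)$ for $t\leq \zeta^z_n$.

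For the truncated equation, the composition operator $F_n$ is globally Lipschitz on $E$ with some constant $L_n$. Applying the mild formulation yields, for $z,z'\in B$,
\begin{equation*}
Z^z_n(t)-Z^{z'}_n(t)=e^{t\Delta_N}(z-z')+\int_0^t e^{(t-s)\Delta_N}\bigl[F_n(Z^z_n(s))-F_n(Z^{z'}_n(s))\bigr]ds+\gamma(Z^z_n)(t)-\gamma(Z^{z'}_n)(t).
\end{equation*}
Estimating the deterministic pieces via \eqref{prop-S2} and the Lipschitz bound $|F_n(u)-F_n(v)|_E\leq L_n|u-v|_E$, and the stochastic piece via Proposition~\ref{s3-prop-1}, gives on a sufficiently small interval $[0,t_0]$ (with $t_0$ chosen so that the constant in \eqref{s3-eq-gamma} is $<1/2$)
\begin{equation*}
|Z^z_n-Z^{z'}_n|_{L_{t_0,p}}^p\leq C_n|z-z'|_E^p+C_n\int_0^{t_0}|Z^z_n-Z^{z'}_n|_{L_{s,p}}^p\,ds.
\end{equation*}
Gronwall's inequality followed by the same iteration over $[kt_0,(k+1)t_0]$ as in Lemma~\ref{lem-bounded} yields $|Z^z_n-Z^{z'}_n|_{L_{T,p}}\leq K_n(T)|z-z'|_E$ for a constant $K_n(T)$ independent of $z,z'\in B$.

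To pass from $Z_n$ to $Z$, pick $p'>p$ and split
\begin{equation*}
\E\sup_{t\in[0,T]}|Z^z(t)-Z^{z'}(t)|_E^p=I_1+I_2,
\end{equation*}
where $I_1$ carries the indicator $\mathbf{1}_{\{T\leq \zeta^z_n\wedge \zeta^{z'}_n\}}$ and $I_2$ the complementary indicator. On the event in $I_1$ the two trajectories equal the truncated ones, so $I_1\leq K_n(T)^p|z-z'|_E^p$. For $I_2$, Hölder with exponents $p'/p$ and its conjugate gives
\begin{equation*}
I_2\leq \bigl(\E\sup_{t\in[0,T]}|Z^z(t)-Z^{z'}(t)|_E^{p'}\bigr)^{p/p'}\,\PP\bigl(\zeta^z_n\wedge \zeta^{z'}_n<T\bigr)^{1-p/p'},
\end{equation*}
and Lemma~\ref{lem-bounded} bounds the first factor by a constant depending only on $R,p',T$, while Chebyshev plus Lemma~\ref{lem-bounded} yields $\PP(\zeta^z_n<T)\leq n^{-p'}c_{p'}(T)(1+R^{p'})$, uniformly in $z\in B$. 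Given $\epsilon>0$, one first chooses $n$ so large that $I_2<\epsilon/2$ uniformly over $z,z'\in B$, then $\delta>0$ so that $K_n(T)^p\delta^p<\epsilon/2$; for $|z-z'|_E<\delta$ the full expectation is below $\epsilon$, which is the claimed uniform continuity.

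The main obstacle is the quadratic growth of the drift: the Lipschitz constant $L_n$ (and hence $K_n(T)$) diverges as $n\to\infty$, so one cannot simply let $n$ go to infinity in the truncated estimate. The two-scale argument sidesteps this by first fixing $n$ large enough (using the higher $p'$-moment bound from Lemma~\ref{lem-bounded}, which is uniform over $B$) to make the exit event negligible, and only then exploiting the Lipschitz estimate on a fixed truncation level.
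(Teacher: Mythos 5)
Your proposal is correct and follows essentially the same route as the paper: a Lipschitz/Gronwall estimate for the truncated equation, a split over the event that both trajectories stay below the truncation level, a Chebyshev bound on the exit probability via Lemma \ref{lem-bounded}, and the two-step choice of $n$ before $\delta$. The only cosmetic difference is that you use a general H\"older exponent $p'/p$ where the paper uses Cauchy--Schwarz with $L_{T,2p}$-moments; the substance is identical.
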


\begin{proof}
	With the help of \eqref{bounded}, the proof is similar to \cite[Proposition 5.6]{Cer03}. Thus we provide a sketch of the main ideas only.
Let $Z^{z_1}(t),Z^{z_2}(t)$ and $Z_n^{z_1}(t),Z_n^{z_2}(t)$ be the solutions of \eqref{eq-main} and \eqref{Z_n} with initial data $Z(0)=Z_n(0)=z_1$ and $Z(0)=Z_n(0)=z_2$, respectively. As in the  proof of the first part, because of the Lipschitz continuity of $F_n$, it is easy to obtain that
\begin{equation}\label{depend}
\abs{Z_n^{z_1}-Z_n^{z_2}}_{L_{T,p}}^p\leq c_{n,p}(T) \abs{z_1-z_2}_{E}^p.
\end{equation}
Consider the stopping times $\zeta_n^{z_1}$ and $\zeta_n^{z_2}$ as in \eqref{tau} corresponding to initial values $z_1, z_2$, respectively, we have
\begin{equation}\label{Zu-Zv}
\begin{aligned}
\abs{Z^{z_1}-Z^{z_2}}_{L_{T,p}}^p
\leq& \abs{Z_n^{z_1}-Z_n^{z_2}}_{L_{T,p}}^p \\
&+c_p\big(1+\abs{Z^{z_1}}_{L_{T,2p}}^p+\abs{Z^{z_2}}_{L_{T,2p}}^p\big)\big(\PP\{\zeta_n^{z_1}\wedge\zeta_n^{z_2}\leq T\}\big)^{1/2}.
\end{aligned}
\end{equation}
Moreover, it follows from \eqref{bounded} that
\begin{equation*}
\begin{aligned}
\PP\{\zeta_n^{z_1}\wedge\zeta_n^{z_2}\leq T\}
\leq \dfrac {c(T)}{n^2}\Big(1+\abs{z_1}^2_{E}+\abs{z_2}^2_{E}\Big).
\end{aligned}
\end{equation*}
Therefore, by applying  \eqref{bounded} again, we obtain from \eqref{Zu-Zv} and \eqref{depend} that
\begin{equation}\label{Z^u-Z^v}
\abs{Z^{z_1}-Z^{z_2}}_{L_{T,p}}^p\leq c_{n,p}(T)\abs{z_1-z_2}_{E}^p+\dfrac{c(T)}{n}\Big(1+\abs{z_1}_{E}^{p+1}+\abs{z_2}_{E}^{p+1}\Big).
\end{equation}
Now, for any $z_1,z_2$ in a bounded set of $E_+$ and arbitrary $\eps>0$, we first find $\bar n\in\N$ such that
$$\dfrac{c(T)}{\bar n}\Big(1+\abs{z_1}_{E}^{p+1}+\abs{z_2}_{E}^{p+1}\Big)<\dfrac{\eps}2,$$
where $c(T)$ is the constant in \eqref{Z^u-Z^v}.
Let $0<\delta<1$ be such that
$$c_{\bar n,p}(T)\abs{z_1-z_2}_{E}^p<\dfrac{\eps}2
\quad\text{whenever}\quad\abs{z_1-z_2}_{E}<\delta,$$
where $c_{\bar n,p}(T)$ is the constant in \eqref{Z^u-Z^v} corresponding to $\bar n$.
Therefore, continuous dependence of the solution on initial data is proved.
\end{proof}

\para{Positivity.}
We have obtained that the solutions are non-negative  provided the initial data are non-negative.
In fact, we expect that the solution to be positive under weak conditions on positivity of the initial data.
This property is also interesting in both SPDEs theory and different applications. Moreover, the results and techniques in this points will also be used to examine the existence of the density (of the law of solution) in Section \ref{sec:den}.
We have the following Proposition.

\begin{prop}\label{s3-prop-pos}
	Suppose $(U_0,V_0)\in E$ such that $U_0,V_0\geq 0$ but not identical to $0$. Then, $U(t),V(t)> 0, \forall t>0; \a.s$
\end{prop}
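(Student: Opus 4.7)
The plan is to localise by the stopping times $\zeta_n$ from the proof of Theorem \ref{thm-exi} and then bound $U$ from below by the solution of an auxiliary linear stochastic heat equation with multiplicative noise whose strict positivity is classical. By symmetry I focus on $U$. On $\{s\leq\zeta_n\}$ one has $Z(s)=Z_n(s)$ with $|Z_n(s)|_E\leq n$, so $U(s,x),V(s,x)\in[0,n]$ for every $x\in[0,1]$. Using $m_i,a_i,b_i\geq 0$, this yields the one-sided drift estimate
\[
F_1(U,V)(x)=U(x)\bigl(m_1(x)-a_1(x)U(x)-b_1(x)V(x)\bigr)\geq -K_nU(x)\quad\text{on }[0,\zeta_n],
\]
where $K_n:=n\bigl(\sup_xa_1(x)+\sup_xb_1(x)\bigr)$.

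Next I would introduce the linear SPDE $d\tilde U=(\Delta_N\tilde U-K_n\tilde U)\,dt+\sigma_1\tilde U\,dW_1$ with $\tilde U(0)=U_0$; its coefficients are globally Lipschitz, so existence and uniqueness of a mild solution $\tilde U_n\in L^p(\Omega;C([0,T],C([0,1],\R)))$ follows from Proposition \ref{s3-prop-1} and the argument in the proof of Lemma \ref{exist-lipchitz}. Setting $Y:=U-\tilde U_n$, a direct computation gives $Y(0)=0$ and, on $[0,\zeta_n]$,
\[
Y(t)=\int_0^te^{(t-s)\Delta_N}\bigl[-K_nY(s)+g(s)\bigr]ds+\int_0^te^{(t-s)\Delta_N}\sigma_1Y(s)\,dW_1(s),
\]
with source $g:=F_1(U,V)+K_nU\geq 0$. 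A comparison principle of Kotelenez type (obtained by applying It\^o's formula to a smooth approximation of $(Y^-)^2$ in $L^2((0,1),\R)$; crucially, the noise coefficient $\sigma_1 Y$ vanishes on $\{Y=0\}$ and the source $g$ is non-negative) then gives $Y\geq 0$, hence $U(t,x)\geq\tilde U_n(t,x)$ on $[0,\zeta_n]\times[0,1]$ a.s.

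Finally, the substitution $\hat U_n(t,x):=e^{K_nt}\tilde U_n(t,x)$ removes the damping and reduces $\tilde U_n$ to the standard stochastic heat equation with multiplicative noise, $d\hat U_n=\Delta_N\hat U_n\,dt+\sigma_1\hat U_n\,dW_1$ with $\hat U_n(0)=U_0$. A Mueller-type strict positivity theorem (whose proof uses only that $G_t(x,y)>0$ strictly, which is immediate from \eqref{prop-A1}) yields $\hat U_n(t,x)>0$ for every $t>0$ and $x\in[0,1]$ a.s.\ whenever $U_0\geq 0$ is not identically zero; hence $\tilde U_n>0$ and $U(t,x)>0$ on $(0,\zeta_n]\times[0,1]$. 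Since $\zeta_n\uparrow\infty$ a.s., letting $n\to\infty$ gives $U(t,x)>0$ for all $t>0$ a.s. The same argument with $(U,V)$ interchanged gives $V(t,x)>0$.

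The main obstacle lies in the comparison step and the invocation of strict positivity: both hinge on working with \emph{mild} solutions of an SPDE driven by space-time white noise with only locally Lipschitz drift, where classical strong-solution comparison and support theorems are not directly available. The localisation via $\zeta_n$ is essential because it is what reduces the drift to the one-sided bound $F_1(U,V)\geq -K_nU$ needed for the Kotelenez-type argument, while the Mueller-type positivity result, originally stated on the torus or the line, transfers to the Neumann interval $[0,1]$ essentially verbatim once the strict positivity of the Neumann heat kernel is invoked.
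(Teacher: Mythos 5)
Your route is genuinely different from the paper's. The paper's proof is much shorter: it passes to the truncated equations \eqref{Z_n}, whose coefficients are globally Lipschitz, invokes the strict positivity results of \cite[Theorem 2]{MN08} or \cite[Proposition 3.1]{PT93} directly for the (nonlinear, Lipschitz) truncated solutions $U_n(t),V_n(t)$, and then transfers positivity to $U(t),V(t)$ by the a.s.\ identification $Z(t)=Z_{n(\omega)}(t)$ over the countable family of truncations. You instead insert an intermediate comparison: bound $U$ below by the solution of a linear damped stochastic heat equation and invoke strict positivity only for the linear equation after removing the damping. What your approach buys is that the external positivity input becomes as elementary as possible (the linear multiplicative stochastic heat equation); what it costs is an additional external ingredient, the comparison principle for mild solutions, which the paper's route avoids entirely since the cited results of Mueller--Nualart and Pardoux--Zhang already cover the Lipschitz nonlinear drift.

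The one step of your argument that is not adequately justified as written is precisely that comparison step. You propose to apply It\^o's formula to a smooth approximation of $(Y^-)^2$ in $L^2((0,1),\R)$, but $Y$ is only given as a mild solution, and (as Section \ref{sec:lon} of the paper emphasizes) the classical It\^o formula is not available for mild solutions of SPDEs driven by space-time white noise. To make this rigorous you would need either to cite an established comparison theorem for mild solutions of the one-dimensional stochastic heat equation with Lipschitz coefficients (Kotelenez, Manthey--Zausinger, or Donati-Martin--Pardoux type), or to carry out a finite-dimensional/Galerkin approximation for which the It\^o computation is legitimate and then pass to the limit. A second, smaller point: the localisation is cleaner if you compare the globally defined truncated solution $U_n$ with $\tilde U_n$ rather than working on the stochastic interval $[0,\zeta_n]$, since the truncated drift $f_{n,1}$ satisfies $f_{n,1}(x,u,v)\geq -K_nu$ for \emph{all} $(u,v)$ with $u,v\geq 0$ (not just for $|(u,v)|_{\R^2}\leq n$), so no stopping is needed in the comparison itself; positivity of $U$ then follows exactly as in the paper by the countable patching over $n$. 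With those two repairs your argument is correct.
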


\begin{proof}
	Since $U(t),V(t)$ is  continuous a.s., it suffices to prove that for any fixed $t$, $U(t),V(t)>0$ a.s.
	We use truncation schemes as in Theorem \ref{thm-exi}.
	As in \cite[Theorem 2]{MN08} or \cite[Proposition 3.1]{PT93}, we
	 obtain the positivity of $ U_n(t), V_n(t)$, i.e., for any $t> 0$, $ U_n(t)$ and $V_n(t)>0$ a.s.
	Let $t>0$ be fixed but otherwise arbitrary. Since there are countable number of
	truncated equations, the set in which the positive property does not hold for some truncated solution is a null set.
	Therefore, because of the definitions of $U(t),V(t)$
 and $U_n(t),V_n(t)$,
 one can see that outside a null set, there is an $n=n(\omega)$ such that $U(t)=U_n(t),V(t)=V_n(t)$.
So, the positivity of $U(t), V(t)$ follows from the positivity of $U_n(t),V_n(t)$.	
\end{proof}

\para{Discussion on initial condition.}
To close this section, we discuss briefly conditions on the initial data for the existence and uniqueness of the mild solution.
In fact, the initial values are required to be in $E$, the space of continuous functions to guarantee the well-posedness of the problem in $E$.
If one only wants to obtain the well-posedness in the space of square integrable function $H$, the required continuity of initial condition is
not needed.
Let us state
this fact in the following theorem.

\begin{thm}\label{thm-exi-H}
For any initial data $0\leq U_0,V_0$, $(U_0,V_0)\in L^\infty((0,1),\R^2)$, there exists a unique mild solution $Z(t)=(U(t),V(t))$ of \eqref{eq-mild-infinite} belonging to $L^p(\Omega; C([0,T],H))$ for any $T>0, p\geq 1.$ The solution is non-negative, i.e., $U(t),V(t)\geq 0$
for any $t\geq 0$ \textcolor{blue}{\a.s} Moreover, the solution depends continuously on the initial data.
\end{thm}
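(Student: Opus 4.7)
The plan is to rerun the proof of Theorem \ref{thm-exi} with the Banach space $E$ replaced by the Hilbert space $H$: the continuity of the initial data was only needed to close sup-norm estimates, and since $L^\infty((0,1),\R^2) \hookrightarrow H$ and $e^{t\Delta_N}$ is a contraction on $H$, the same truncate--bound--patch scheme works once every estimate is taken in $H$. First I would establish the $H$-analog of Proposition \ref{s3-prop-1}: the map $\gamma$ sends $L^p(\Omega; C([0,T], H))$ into itself and satisfies the contraction \eqref{s3-eq-gamma} on small time intervals. This step is in fact easier than the $E$-version, since one can skip the factorization through $W^{\eps,p}$ and the Sobolev embedding into $C$; instead, the It\^o isometry in $H$, Parseval's identity, and the Hilbert--Schmidt bound $\int_0^1 G_{t-s}^2(x,y)\,dx \le c(t-s)^{-1/2}$ coming from \eqref{prop-A1} give directly
\[
\E |\gamma(u)(t) - \gamma(v)(t)|_H^2 \le c\, |\sigma|_\infty^2 \int_0^t (t-s)^{-1/2}\, \E |u(s) - v(s)|_H^2\, ds,
\]
and the sup-in-time version follows from the factorization \eqref{s3-prop1-eq0}.

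Next I would truncate the drift as in \eqref{Z_n} to obtain $F_n$ and use the contraction-mapping argument of Lemma \ref{exist-lipchitz}, now in $L^p(\Omega; C([0,T], H))$ instead of $L^p(\Omega; C([0,T], E))$, to produce a unique mild solution $Z_n = (U_n, V_n)$ of the truncated problem. Positivity of $Z_n$ follows from the approximation--comparison argument cited in Lemma \ref{positivity}, applied here to $L^\infty$ initial data (if needed, mollify $U_0, V_0$ into non-negative continuous functions converging in $H$, apply Theorem \ref{thm-exi} to the approximants, and pass to the $H$-limit using the regularity of $\gamma$ just established). For the uniform-in-$n$ bound I would mimic \eqref{U_n^3} in $H$: using $U_n, V_n \ge 0$ to discard the non-positive drift terms $-a_1 U_n^2 - b_1 U_n V_n$ gives the pointwise inequality
\[
0 \le U_n(t,x) \le (e^{t\Delta_N} U_0)(x) + \int_0^t \bigl(e^{(t-s)\Delta_N}(m_1 U_n(s))\bigr)(x)\, ds + W_{U_n}(t,x),
\]
which, since $U_n \ge 0$, implies $U_n \le |\,\text{RHS}\,|$ pointwise. $L^2$-contractivity of $e^{t\Delta_N}$ then yields
\[
|U_n(t)|_H \le |U_0|_H + |m_1|_\infty \int_0^t |U_n(s)|_H\, ds + |W_{U_n}(t)|_H,
\]
and the same estimate for $V_n$ combined with the $H$-regularity of $\gamma$ (to absorb the stochastic term for small $T$) and Gronwall's inequality produces $\E \sup_{s \le T}|Z_n(s)|_H^p \le c_p(T)(1 + |Z_0|_H^p)$ uniformly in $n$.

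With this uniform bound, set $\zeta_n := \inf\{t \ge 0 : |Z_n(t)|_H \ge n\}$, observe $\zeta_n \uparrow \infty$ a.s., and patch $Z(t) := Z_n(t)$ for $t \le \zeta_n$ as at the end of the proof of Theorem \ref{thm-exi}. Uniqueness and non-negativity pass from the truncated level to the limit, and continuous dependence on initial data is an $H$-norm rerun of Proposition \ref{wel-prop-2}. The main place where care is needed is lifting positivity from the $E$-valued truncated solutions of \cite{NY20, NNY18} to the $H$-valued setting, since their original arguments use the sup-norm; this is handled by the mollification-and-pass-to-limit step mentioned above, which is legitimate precisely because of the $H$-regularity of the stochastic convolution established in the first step.
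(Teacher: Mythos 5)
Your $H$-estimates for the stochastic convolution and the Gronwall argument are fine as far as they go, but the proposal breaks down at the localization step, and the breakdown is exactly the point the paper flags in the remark following this theorem. The truncation $f_{n,i}$ is a \emph{pointwise} truncation: $F_n(z)$ coincides with $F(z)$ only on the set where $|z(x)|_{\R^2}\le n$ for (almost) every $x$, i.e.\ where $|z|_{L^\infty((0,1),\R^2)}\le n$. If you define $\zeta_n:=\inf\{t: |Z_n(t)|_H\ge n\}$, then on $[0,\zeta_n]$ you only control the $L^2$-norm of $Z_n(t)$, which does not prevent $|Z_n(t,x)|$ from exceeding $n$ on a set of small measure; hence $F_n(Z_n(t))\ne F(Z_n(t))$ there, the consistency $Z_n=Z_m$ on $[0,\zeta_n\wedge\zeta_m]$ fails, the patched process $Z(t):=Z_n(t)$ is not well defined, and it need not solve the original equation. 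You also cannot repair this by truncating at the level of the $H$-norm, since $z\mapsto z(m-az-bz')$ does not map bounded sets of $H=L^2$ into $H$ (squares of $L^2$ functions need not be in $L^2$), so an $H$-truncated drift would not be Lipschitz on $H$.

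The paper's proof therefore keeps the localization in the sup-norm: the only change from Theorem \ref{thm-exi} is to replace Lemma \ref{lem-bounded} by its $L^\infty$-analogue (Lemma \ref{wel-lem4}), namely $\E\sup_{s\le t}|Z_n(s)|^p_{L^\infty((0,1),\R^2)}\le c_p(t)\big(1+|Z_0|^p_{L^\infty((0,1),\R^2)}\big)$ --- which is precisely why the hypothesis is $Z_0\in L^\infty((0,1),\R^2)$ and not merely $Z_0\in H$ --- and to define the stopping times $\zeta_n$ via the $L^\infty$-norm. With that, Chebyshev still gives $\zeta_n\uparrow\infty$ a.s., the patching, uniqueness, and non-negativity arguments go through verbatim, and the conclusion that $Z\in L^p(\Omega;C([0,T],H))$ follows. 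So you should run your uniform bound (your pointwise inequality $0\le U_n\le \mathrm{RHS}$ is the right starting point) in the $L^\infty$-norm rather than the $H$-norm; your $H$-version of Proposition \ref{s3-prop-1} and the mollification argument for positivity are reasonable supplements, but they do not substitute for the $L^\infty$ control that makes the truncation scheme close.
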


\begin{proof}
The proof of this Theorem is the same as that of Theorem \ref{thm-exi}.
The truncation functions are defined first and then the sequence of truncated solutions are obtained.
We need only take care the uniform boundedness of the sequence of truncated solutions.
By the same arguments as that of Lemma \ref{lem-bounded}, we have the following Lemma.

\begin{lem}\label{wel-lem4}
For all $n\in\N$ then
\begin{equation*}
\E \sup_{s\in [0,t]}\abs{Z_n(s)}_{L^{\infty}((0,1),\R^2)}^p\leq c_p(t)\big(1+\abs{Z_0}_{L^{\infty}((0,1),\R^2)}^p\big),
\end{equation*}
where $c_{p}(t)$ is a positive constant depending on $p$ and $t$ but is independent of $n$.
\end{lem}

With this boundedness, we can mimic the remaining  proof of Theorem \ref{thm-exi}
to obtain the desired results.
\end{proof}

\begin{rem}{\rm
One may expect that to obtain the well-posedness in $H$, the initial condition $Z_0$ is required only to be in $H$.
However, this does not seem possible to us now.
The truncation process may be unavoidable in non-Lipschitz cases. Then, the uniform boundedness of sequence of truncated solutions in $L^\infty((0,1),\R^2)$ is needed to guarantee the solution to be
 well defined.
The uniform boundedness
in $H$
is
not enough
and thus,
the initial conditions need to be almost everywhere bounded.
}\end{rem}

\section{Regularity of solution}\label{sec:reg}
In Section \ref{sec:wel}, we have proved the existence and uniqueness of the solution belonging to the space of continuous functions.
In this section, we obtain additional regularities of the solution.

\para{H\"older continuity on $t>0$.}
We consider the H\"older continuity of the solution on intervals excluding $0$ first.
On these intervals, the solution satisfies the classical regularity, namely, H\"older continuity with exponent $<1/2$ in space and  exponent $<1/4$ in time.

\begin{thm}\label{reg-thm1}
Let $Z(t)$ be the solution of \eqref{eq-main} with initial value $Z_0=(U_0,V_0)\in E$, $U_0,V_0\geq 0$.
On compact set of $\{t>0\}$, the function $Z(t,x)$ is H\"older continuous in space with any exponent $<\frac 12$ and H\"older continuous in time with any exponent $<\frac 14$.
That is,
for any $0<t_0<T<\infty$, and $\beta_1\in (0,1/2)$, $\beta_2\in (0,1/4)$, there is a
finite random variable $C_H=C_H(t_0,T,\beta_1,\beta_2)$ a.s. such that
$$
|Z(t,x)-Z(s,y)|\leq C_H\left(|x-y|^{\beta_1}+|t-s|^{\beta_2}\right),\quad\forall x,y\in [0,1], s,t\in [t_0,T]\a.s
$$
\end{thm}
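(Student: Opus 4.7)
I would decompose the mild solution in \eqref{eq-mild-random} as $Z(t,x) = I_0(t,x) + I_1(t,x) + I_2(t,x)$, where $I_0$ is the initial-data term, $I_1$ is the drift convolution, and $I_2$ is the stochastic convolution, prove H\"older regularity for each piece separately, and then invoke a two-parameter Kolmogorov continuity criterion on $[t_0,T] \times [0,1]$. The critical exponents $1/2$ and $1/4$ will come from $I_2$; the other two pieces are strictly smoother.

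\textbf{The easy pieces.} For $t \geq t_0 > 0$, analyticity of the Neumann heat semigroup makes $I_0(t,x) = (e^{t\Delta_N} Z_0)(x)$ jointly $C^\infty$ in $(t,x)$, deterministically. For $I_1$, Theorem \ref{thm-exi} gives $\E \sup_{s \leq T}|Z(s)|_E^p < \infty$ for every $p \geq 1$, so there is an almost surely finite random variable $M$ with $|F_i(Z(s,y))| \leq M$ on $[0,T] \times [0,1]$; combined with the $L^1$-type bounds on increments of the Neumann kernel (which follow from \eqref{prop-A1}), this produces pathwise
\begin{equation*}
|I_1(t,x) - I_1(s,y)| \leq c M \big( |x-y| + |t-s|^{1/2} \big),
\end{equation*}
far better than the claimed H\"older exponents.

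\textbf{The stochastic convolution.} Write the integrand of $I_2$ as $\Phi_i(s,y) = \sigma_i(y) U(s,y)$ or $\sigma_i(y) V(s,y)$; the a priori bound gives $|\Phi_i(s,y)| \leq c M$ almost surely. The Burkholder-Davis-Gundy inequality for Walsh-type stochastic integrals (see Section \ref{sec-22}), applied separately to a pure space increment at fixed $t$ and to a pure time increment at fixed $x$, reduces matters to the classical Neumann-heat-kernel estimates
\begin{equation*}
\int_0^T \!\!\! \int_0^1 [G_r(x,z) - G_r(y,z)]^2 dz\, dr \leq c |x - y|,
\end{equation*}
\begin{equation*}
\int_0^{s} \!\!\! \int_0^1 [G_{t-r}(x,z) - G_{s-r}(x,z)]^2 dz\, dr + \int_{s}^{t} \!\!\! \int_0^1 G_{t-r}^2(x,z) dz\, dr \leq c |t - s|^{1/2},
\end{equation*}
which follow from \eqref{prop-A1} and \eqref{prop-A2}. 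These yield, for every $p \geq 2$,
\begin{equation*}
\E |I_2(t,x) - I_2(s,y)|^p \leq c_p \big( |x-y|^{p/2} + |t-s|^{p/4} \big).
\end{equation*}
Letting $p$ grow and applying the two-parameter Kolmogorov continuity theorem on $[t_0,T] \times [0,1]$ produces a version that is H\"older continuous with any exponents $\beta_1 < 1/2$ in space and $\beta_2 < 1/4$ in time, yielding the desired finite random constant $C_H$.

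\textbf{Main obstacle.} The only genuinely technical step is the time-increment kernel bound, where the outer integral must be split at $s \wedge t$ in order to absorb the short-time singularity of $\partial_t G_t$; the rest is a standard BDG-plus-Kolmogorov argument, made clean by the a priori moment control from Theorem \ref{thm-exi}, which converts the nonlinearities $F_i(Z)$ and $\sigma_i Z$ into almost surely bounded integrands on any fixed $[0,T] \times [0,1]$.
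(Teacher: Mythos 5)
Your proposal is correct and follows the same skeleton as the paper's proof: the identical three-term decomposition of the mild solution, the same Neumann-kernel increment estimates (these are exactly \eqref{reg-eq-1}, \eqref{reg-eq-2}, \eqref{reg-eq-2-11} of Lemma \ref{reg-lem1}), the same use of the a priori moment bounds from Theorem \ref{thm-exi} to control the nonlinearities, and a Kolmogorov-type criterion to conclude. The one place where you genuinely diverge is the spatial regularity of the stochastic convolution: you run a direct Burkholder--Davis--Gundy estimate on the spatial increment $I_2(t,x)-I_2(t,y)$ via $\int_0^t\int_0^1[G_r(x,z)-G_r(y,z)]^2\,dz\,dr\le c|x-y|$ and then apply Kolmogorov, whereas the paper recycles the factorization argument of Proposition \ref{s3-prop-1} to place $\gamma(Z)(t,\cdot)$ in $W^{\eps,p}((0,1))$ and invokes the Sobolev embedding $W^{\eps,p}\hookrightarrow C^{\theta}$ with $\theta<\eps-1/p$; both routes yield any exponent $<1/2$, yours being the classical Walsh-style argument and the paper's having the advantage of reusing machinery already established for well-posedness. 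Your treatment of the deterministic pieces is also slightly different but unobjectionable ($C^\infty$ smoothing for $e^{t\Delta_N}Z_0$ away from $t=0$, and a pathwise $L^1$-kernel bound for the drift convolution rather than the paper's $L^2$ bound plus Kolmogorov). One small imprecision: the kernel \emph{increment} estimates do not follow from the pointwise comparison \eqref{prop-A1} alone, which only controls $G$ itself; the paper derives them from the eigenfunction expansion of $G_t(x,y)$ and Parseval's identity (proof of Lemma \ref{reg-lem1}), and you should cite or reproduce that computation rather than \eqref{prop-A1}--\eqref{prop-A2}.
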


We need the following auxiliary results to prove Theorem \ref{reg-thm1}.

\begin{prop}\label{reg-prop1} $($Kolmogov's test; see e.g., {\rm\cite[Theorem 3.5]{PZ92}}$)$
Let $\0\subset\R^d$ be a bounded domain. There are $C$, $\delta$, and $\eps>0$ such that
$$
\E \left|X(\xi)-X(\eta)\right|^\delta\leq C|\xi-\eta|^{d+\eps}.
$$
Then $X(\cdot)$ has a H\"older continuous modification $($with any exponent $<\eps/\delta)$.
\end{prop}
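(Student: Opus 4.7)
The plan is to prove Kolmogorov's test by a standard dyadic chaining argument combined with Borel--Cantelli. Without loss of generality I will assume $\0=[0,1]^d$; the general bounded domain case follows by covering with finitely many cubes. First I would apply Markov's (Chebyshev's) inequality to the moment bound: for any $\alpha\in(0,\eps/\delta)$ and any pair $\xi,\eta$,
\begin{equation*}
\PP\bigl(|X(\xi)-X(\eta)|\geq |\xi-\eta|^{\alpha}\bigr)
\leq \frac{\E|X(\xi)-X(\eta)|^\delta}{|\xi-\eta|^{\alpha\delta}}
\leq C|\xi-\eta|^{d+\eps-\alpha\delta}.
\end{equation*}
Introduce the dyadic grids $D_n:=\{k\,2^{-n}:k\in\{0,1,\dots,2^n\}^d\}$ and set $D:=\bigcup_{n\geq 1} D_n$, which is dense in $[0,1]^d$. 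The key event to control is
$$A_n:=\Bigl\{\max_{\xi,\eta\in D_n,\ |\xi-\eta|_\infty=2^{-n}}|X(\xi)-X(\eta)|\geq 2^{-n\alpha}\Bigr\}.$$

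Next I would count adjacent pairs in $D_n$ (there are $O(2^{nd})$ of them) and use the bound above with $|\xi-\eta|\leq \sqrt d\,2^{-n}$ to obtain
$$\PP(A_n)\leq C'\,2^{nd}\cdot 2^{-n(d+\eps-\alpha\delta)}=C'\,2^{-n(\eps-\alpha\delta)}.$$
Because $\alpha<\eps/\delta$ makes the exponent strictly positive, $\sum_n \PP(A_n)<\infty$, so by Borel--Cantelli there exists an a.s.\ finite random index $N(\omega)$ such that for all $n\geq N$ every pair of adjacent points in $D_n$ satisfies $|X(\xi)-X(\eta)|<2^{-n\alpha}$. Then a chaining argument shows that for any $\xi,\eta\in D$ with $|\xi-\eta|\leq 2^{-N}$, one can connect them through a telescoping sequence of adjacent dyadic neighbors at successively finer scales, yielding
$$|X(\xi)-X(\eta)|\leq C_H(\omega)\,|\xi-\eta|^{\alpha},$$
for a random constant $C_H$ depending only on $\alpha,d,N(\omega)$ and the geometric series $\sum_n 2^{-n\alpha}$.

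Finally, since $X$ is a.s.\ $\alpha$-H\"older continuous on the dense countable set $D$, it extends uniquely by uniform continuity to a H\"older continuous function $\widetilde X$ on $[0,1]^d$ with the same exponent. To conclude that $\widetilde X$ is a \emph{modification} of $X$, I would use the moment bound one more time: for each fixed $\xi\in[0,1]^d$, taking any sequence $\xi_k\in D$ with $\xi_k\to\xi$, the hypothesis gives $X(\xi_k)\to X(\xi)$ in $L^\delta$, while the H\"older property on $D$ gives $X(\xi_k)\to \widetilde X(\xi)$ a.s., so $\widetilde X(\xi)=X(\xi)$ a.s.\ for every fixed $\xi$. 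Varying $\alpha$ over a countable sequence $\alpha_j\uparrow \eps/\delta$ yields the statement for \emph{any} exponent strictly less than $\eps/\delta$.

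The main obstacle is bookkeeping in the chaining step: one has to verify that two arbitrary points of $D$ at distance comparable to $2^{-n}$ can indeed be connected by $O(1)$ edges at each dyadic scale $\geq n$, so that the resulting telescoping sum converges geometrically and produces a constant independent of the pair. The probabilistic content is elementary; the only nontrivial ingredient beyond Markov and Borel--Cantelli is this deterministic combinatorial chaining on dyadic grids in $\R^d$.
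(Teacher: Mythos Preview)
Your proposal is correct and follows the standard dyadic chaining proof of Kolmogorov's continuity criterion. Note, however, that the paper does not actually prove this proposition: it is stated as a known auxiliary result with a reference to \cite[Theorem 3.5]{PZ92} and used as a black box in the proofs of Theorems \ref{reg-thm1} and \ref{reg-thm2}. So there is no ``paper's own proof'' to compare against; your argument simply supplies the classical proof that the paper omits.
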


\begin{lem}\label{reg-lem1}
We have the following basic property of Neumann heat kernel. For any $0<s<t<T$, $x,y\in [0,1]$,
one has
\begin{equation}\label{reg-eq-1-00}
\int_0^1\Big(G_{t}(x,\xi)-G_t(y,\xi)\Big)^2d\xi \leq \frac{C|x-y|^2}{t^\frac 32},
\end{equation}
\begin{equation}\label{reg-eq-1}
\int_0^t\left(\int_0^1\Big(G_{s}(x,\xi)-G_s(y,\xi)\Big)^2d\xi\right)ds\leq C|x-y|,
\end{equation}
\begin{equation}\label{reg-eq-2}
\int_s^t \left(\int_0^1 G^2_{t-r}(x,\xi)d\xi\right)dr\leq C |t-s|^{\frac 12},
\end{equation}
\begin{equation}\label{reg-eq-2-11}
\int_0^s\int_0^1\Big(G_{t-r}(x,\xi)-G_{s-r}(x,\xi)\Big)^2d\xi dr\leq C|t-s|^{\frac 12}.
\end{equation}
Moreover, for any $0<T_1<T_2$, there is a $C=C(T_1,T_2)$ such that
\begin{equation}\label{reg-eq-2-00}
\int_0^1 \left(G_t(x,\xi)-G_s(x,\xi)\right)^2d\xi\leq C|t-s|^{\frac 12},\;\forall s,t\in [T_1,T_2], x\in [0,1].
\end{equation}
\end{lem}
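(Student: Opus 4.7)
The plan is to reduce each of the five inequalities to a direct computation, using two tools interchangeably: the two-sided bound~\eqref{prop-A1} comparing $G_t$ to the Gaussian kernel on $\R$, and the Neumann eigenfunction expansion
\begin{equation*}
G_t(x,\xi)=1+2\sum_{n\geq 1}e^{-n^2\pi^2 t}\cos(n\pi x)\cos(n\pi\xi),
\end{equation*}
combined with Parseval's identity in $\xi\in[0,1]$ against the orthonormal cosine basis to turn each $L^2_\xi$ integral into a scalar series in $n$.

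For the two pointwise-in-time estimates: \eqref{reg-eq-2} follows at once from $\int_0^1 G_{t-r}^2(x,\xi)\,d\xi\leq C(t-r)^{-1/2}$ (a consequence of \eqref{prop-A1} together with the standard Gaussian $L^2$ computation) and integrating in $r\in[s,t]$. For \eqref{reg-eq-1-00} I would write $G_t(x,\xi)-G_t(y,\xi)=\int_y^x\partial_z G_t(z,\xi)\,dz$, apply Cauchy--Schwarz in $z$, integrate in $\xi$, and observe via Parseval that
\begin{equation*}
\int_0^1\bigl(\partial_z G_t(z,\xi)\bigr)^2\,d\xi=2\sum_{n\geq 1} n^2\pi^2 e^{-2n^2\pi^2 t}\sin^2(n\pi z)\leq C t^{-3/2},
\end{equation*}
where the final bound comes from a Riemann-sum comparison with $\int_0^\infty u^2 e^{-2\pi^2 u^2 t}\,du\leq Ct^{-3/2}$.

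For the two integrated differences \eqref{reg-eq-1} and \eqref{reg-eq-2-11}: in each case, Parseval reduces the $L^2_\xi$ integrand to a sum $2\sum_{n\geq 1} e^{-2n^2\pi^2 s}D_n^2$, where $D_n=\cos(n\pi x)-\cos(n\pi y)$ in \eqref{reg-eq-1} and $D_n=1-e^{-n^2\pi^2(t-s)}$ (with $s$ replaced by $s-r$) in \eqref{reg-eq-2-11}. The remaining $s$- or $r$-integration produces a factor $(2n^2\pi^2)^{-1}$, leaving a scalar sum $\sum_{n\geq 1}n^{-2}D_n^2$. Using the elementary bounds $|\cos(n\pi x)-\cos(n\pi y)|^2\leq C\min(1,n^2|x-y|^2)$ and $(1-e^{-n^2\pi^2\tau})^2\leq\min(n^4\pi^4\tau^2,1)$ respectively, I would split each sum dyadically at the threshold $n\sim 1/|x-y|$ (resp.\ $n\sim\tau^{-1/2}$), and verify that the small-$n$ and large-$n$ pieces each contribute $C|x-y|$ (resp.\ $C(t-s)^{1/2}$).

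Finally, \eqref{reg-eq-2-00} is handled by the same Parseval expansion, $\int_0^1(G_t-G_s)^2\,d\xi\leq 2\sum_{n\geq 1} e^{-2n^2\pi^2 s}(1-e^{-n^2\pi^2(t-s)})^2$; here the constraint $s\geq T_1>0$ supplies a uniform factor $e^{-2n^2\pi^2 T_1}$ that absorbs any polynomial growth in $n$, so that $(1-e^{-x})^2\leq\min(x^2,1)$ together with a dyadic split yields $C(T_1,T_2)(t-s)^{1/2}$. The most delicate point I expect is the book-keeping for the dyadic splits in \eqref{reg-eq-1} and \eqref{reg-eq-2-11}: the small-$n$ and large-$n$ pieces must each be estimated sharply, with no logarithmic loss, for the exponents $|x-y|^1$ and $(t-s)^{1/2}$ to come out cleanly; once that split is done, the remaining work is elementary.
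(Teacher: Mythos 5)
Your proposal is correct and follows essentially the same route as the paper: the eigenfunction expansion $G_t(x,\xi)=1+2\sum_{n\ge1}e^{-n^2\pi^2t}\cos(n\pi x)\cos(n\pi\xi)$ plus Parseval reduces each $L^2_\xi$ integral to a scalar series, which is then bounded using $|\cos(n\pi x)-\cos(n\pi y)|^2\le C(4\wedge n^2|x-y|^2)$ and $1-e^{-n^2\tau}\le 1\wedge n^2\tau$ and a split at the natural threshold (the paper compares the sum to an integral $\int_1^\infty e^{-t\xi^2}(4\wedge\xi^2|x-y|^2)\,d\xi$ where you split dyadically, which is the same computation). Your minor variants --- differentiating the kernel and using Cauchy--Schwarz for \eqref{reg-eq-1-00}, and invoking \eqref{prop-A1} with the Gaussian $L^2$ computation for \eqref{reg-eq-2} --- are cosmetic and both sound.
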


\begin{proof}
The proof is standard and can be found in \cite{Wal86}.
For example, one can obtain these results by using the eigenfunction expansions \cite{Wal86} of $G_t(x,y)$ in the form
$$
1+\sum_{n=1}^\infty 2e^{-n^2\pi^2t}\cos(n\pi x)\cos(n\pi y).
$$
Therefore, we have
\begin{equation}\label{reg-eq-1234}
\begin{aligned}
\int_0^1&\Big(G_{t}(x,\xi)-G_t(y,\xi)\Big)^2d\xi \\&=\sum_{n=1}^\infty 2e^{-2n^2\pi^2t}|\cos(n\pi x)-\cos(n\pi y)|^2,\quad(\text{due to Parseval's identity})\\
&\leq C\sum_{n=1}^\infty e^{-2n^2\pi^2t}\left(4\wedge n^2|x-y|^2\right)
\leq C\int_1^\infty e^{-t\xi^2}\left(4\wedge \xi^2|x-y|^2\right)d\xi\\
&\leq C|x-y|^2\int_1^\infty e^{-t\xi^2}\xi^2d\xi\leq \frac{C|x-y|^2}{t^{\frac 32}}.
\end{aligned}
\end{equation}
As a result, \eqref{reg-eq-1-00} is proved.
Moreover, it follows from \eqref{reg-eq-1234} that
$$
\begin{aligned}
\int_0^t \int_0^1\Big(G_{s}(x,\xi)-G_s(y,\xi)\Big)^2d\xi ds&\leq
C\int_0^t \int_1^\infty e^{-s\xi^2}\Big(4\wedge \xi^2|x-y|^2\Big)d\xi ds\\
&\leq C\int_1^\infty\Big(\frac 4{\xi^2}\wedge |x-y|^2\Big)d\xi
\leq C|x-y|.
\end{aligned}
$$
As a consequence, \eqref{reg-eq-1} is proved. Similarly, inequalities  \eqref{reg-eq-2} and \eqref{reg-eq-2-11} are obtained.
Finally, \eqref{reg-eq-2-00} can be proved by the same way as that of \eqref{reg-eq-1234} and using the fact $1-e^{-n^2(t-s)}\leq 1\wedge n^2(t-s)$.
\end{proof}

\begin{proof}[Proof of Theorem \ref{reg-thm1}]
It is known that
\begin{equation}\label{reg-eq1}
\begin{aligned}
Z(t,x)=&\left(e^{t\Delta_N}Z_0\right)(x)+\Big(\int_0^te^{(t-s)\Delta_N}F(Z(s))ds\Big)(x)+\gamma(Z)(t,x)\\
=:&A_0(t,x)+A_1(t,x)+\gamma(Z)(t,x),
\end{aligned}
\end{equation}
where $\gamma$ is the mapping defined as in \eqref{eq-gamma}.

First, by \eqref{reg-eq-1-00} and \eqref{reg-eq-2-00} in Lemma \ref{reg-lem1},
it can be seen that for any $k\in\N$,
\begin{equation}\label{reg-eq2}
\E|A_0(t,x)-A_0(t,y)|^{2k}\leq \frac{c_k|Z_0|_{E}^{2k}}{t^{\frac {3k}2}}|x-y|^k,\quad\forall t\geq 0, x,y\in[0,1],
\end{equation}
\begin{equation}\label{reg-eq2'}
\E|A_0(t,x)-A_0(s,x)|^{4k}\leq c_{k,t_0,T}|Z_0|_{E}^{4k}|t-s|^k,\quad\forall t,s\in [t_0,T], x,y\in [0,1].
\end{equation}
Combining \eqref{reg-eq2}, \eqref{reg-eq2'}, and Proposition \ref{reg-prop1}, we obtain the H\"older continuity of $A_0(t,x)$ in space with any exponent $<\frac 12$ and in time with any exponent $<\frac 14$.

Second, we have that $A_1(t,x)-A_1(t,y)$ consists of two components with one of them being
\begin{equation}\label{reg-eq3}
\begin{aligned}
\int_0^t \int_0^1 &G_{t-s}(x,\xi)F_1(Z(s,\xi))d\xi ds-\int_0^t \int_0^1 G_{t-s}(y,\xi)F_1(Z(s,\xi))d\xi ds\\
&=\int_0^t \left(\int_0^1 \left(G_{t-s}(x,\xi)-G_{t-s}(y,\xi)\right)F_1(Z(s,\xi))d\xi \right)ds,
\end{aligned}
\end{equation}
 Because $F_1(Z)$ has polynomial growth with the boundedness of solutions in the sense of
for any $p\geq 1$
$$\E \sup_{t\in [0,T]}|Z(t)|^p_E<\infty,$$
we have that for all $t\leq T$, $x,y\in [0,1]$, $k\in\N$,
\begin{equation}\label{reg-eq4}
\begin{aligned}
&\E\Big|\int_0^t \Big(\int_0^1 \left(G_{t-s}(x,\xi)-G_{t-s}(y,\xi)\right)F_1(Z(s,\xi))d\xi \Big)ds\Big|^{2k}\\
&\quad\leq c_k(T)\Big|\int_0^t \Big(\int_0^1 |G_{t-s}(x,\xi)-G_{t-s}(y,\xi)|^2d\xi\Big)ds\Big|^k\\
&\quad\leq c_k(T)|x-y|^k\quad\text{due to \eqref{reg-eq-1}}.\\
\end{aligned}
\end{equation}
Combining \eqref{reg-eq3} and \eqref{reg-eq4} implies that
\begin{equation}\label{reg-eq5}
\E |A_1(t,x)-A_1(t,y)|^{2k}\leq c_k(T)|x-y|^k,\quad\forall t\in [0,T], x,y\in[0,1], k\in\N.
\end{equation}
Similarly, using the boundedness of
{\color{blue}$\E \sup_{t\in [0,T]}\sup_{x\in[0,1]}|Z(t,x)|$}
	 and \eqref{reg-eq-2}, we obtain
\begin{equation}\label{reg-eq6}
\E |A_1(t,x)-A_1(s,x)|^{4k}\leq c_{k}(t_0,T)|t-s|^k,\quad\forall s,t\in[t_0,T],x,y\in[0,1], k\in\N.
\end{equation}
Proposition \ref{reg-prop1},  \eqref{reg-eq5}, and \eqref{reg-eq6} allow us to obtain the desired H\"older continuity with any exponent $<1/2$ in space and  any exponent $<1/4$ in time of $A_2(t,x)$.

Finally, by the Burkholder-Davis-Gundy inequality and a similar process as above (the calculation is similar to proof of Proposition \ref{s3-prop-1}), we obtain similar regularity for $\gamma(Z)(t,x)$.
In more detailed, as in Proposition \ref{s3-prop-1}, we have seen that $\gamma(Z)(t,x)\in W^{\eps,p}((0,1))$ for any $p$ and $\eps$ satisfying \eqref{cond-epsbeta}.
Moreover, $W^{\eps,p}((0,1))$ embeds continuously to the H\"older space with any exponent $\theta <\eps-1/p$. On the other hand, for any $\theta <1/2$, we can choose $p$ large and then $\eps$ satisfying \eqref{cond-epsbeta} and $\eps-1/p>\theta$.
The proof for the regularity in time for stochastic integral turns out to be similar to that of \eqref{reg-eq6} after using the Burkholder-Davis-Gundy inequality and then using \eqref{reg-eq-2}.
Because of the above H\"older continuity of $A_1(t,x),A_2(t,x)$ and $\gamma(Z)(t,x)$,
the Theorem is proved.
\end{proof}

\para{H\"older continuity on intervals containing $t=0$.}
In contrast to the case of considering compact interval in $\{t>0\}$, the regularity of $Z(t,x)$ in a compact set containing $t=0$ is more subtle.
The difficulty comes from the singularity of Neumann heat semigroup $e^{t\Delta_N}$ generated by Neumann heat kernel $G_{t}(x,y)$ when $t\downarrow 0$.
First, we have the following properties for $e^{t\Delta_N}$ on interval containing $t=0$ as follows.

\begin{lem}\label{reg-lem2}
If $z_0$ is $\alpha$-H\"older continuous, then there exists $L=L(T)>0$ such that
$$
\left|(e^{t\Delta_N}z_0)(x)-(e^{t\Delta_N}z_0)(y)\right|\leq L|x-y|^{\alpha},\quad \forall t\in [0,T], (x,y)\in [0,1],
$$
and
$$
\left|(e^{t\Delta_N}z_0)(x)-(e^{s\Delta_N}z_0)(x)\right|\leq L|t-s|^{\alpha/2},\quad\forall s,t\in [0,T], x\in [0,1].
$$
\end{lem}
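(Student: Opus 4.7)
\para{Plan.} The strategy is to reduce both H\"older bounds to a single ``near-identity'' estimate
$$
(\ast)\quad \sup_{x\in[0,1]}\bigl|(e^{t\Delta_N}z_0)(x) - z_0(x)\bigr| \leq C\,[z_0]_{C^\alpha}\,t^{\alpha/2}, \quad t\in[0,T],
$$
and then deduce the temporal bound from the semigroup property and the spatial bound from a case split on $|x-y|$ versus $\sqrt t$. To prove $(\ast)$, I first observe that the Neumann kernel preserves constants, $\int_0^1 G_t(x,\xi)\,d\xi = 1$ (readable from the eigenfunction expansion used in \eqref{reg-eq-1234}, or probabilistically via reflecting Brownian motion). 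This lets me center the integrand,
$$
(e^{t\Delta_N}z_0)(x) - z_0(x) = \int_0^1 G_t(x,\xi)\bigl(z_0(\xi) - z_0(x)\bigr)\,d\xi,
$$
after which $|z_0(\xi)-z_0(x)|\leq [z_0]_{C^\alpha}|\xi-x|^\alpha$ combined with the Gaussian upper bound from \eqref{prop-A1} dominates the right-hand side by the moment integral $\int_{\R}\tfrac{1}{\sqrt{2\pi t}}\,e^{-|x-\xi|^2/(2t)}\,|\xi-x|^\alpha\,d\xi = C_\alpha\, t^{\alpha/2}$.

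For the temporal estimate, for $0\leq s\leq t\leq T$ the semigroup identity gives $e^{t\Delta_N}z_0 - e^{s\Delta_N}z_0 = e^{s\Delta_N}\bigl(e^{(t-s)\Delta_N}z_0 - z_0\bigr)$, so the $C([0,1])$-contractivity \eqref{prop-S2} together with $(\ast)$ applied at time $t-s$ yields the required bound of order $|t-s|^{\alpha/2}$. For the spatial estimate, the easy case $|x-y|\geq\sqrt t$ follows from the triangle inequality plus $(\ast)$ applied at both $x$ and $y$, since then $t^{\alpha/2}\leq|x-y|^\alpha$. In the remaining range $|x-y|<\sqrt t$, I use mass conservation again to write
$$
(e^{t\Delta_N}z_0)(x) - (e^{t\Delta_N}z_0)(y) = \int_0^1 \bigl(G_t(x,\xi)-G_t(y,\xi)\bigr)\bigl(z_0(\xi)-z_0(x)\bigr)\,d\xi,
$$
and apply a gradient-type bound $|G_t(x,\xi) - G_t(y,\xi)| \leq C\,\tfrac{|x-y|}{\sqrt t}\,\widetilde G_t(\cdot,\xi)$ obtained by differentiating the image series term by term and using $\bigl|\partial_x e^{-(x-\xi)^2/(4t)}\bigr| = \tfrac{|x-\xi|}{2t}\,e^{-(x-\xi)^2/(4t)}$, where $\widetilde G_t$ is a Gaussian-type kernel of comparable total mass. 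Combined with the same moment computation used in $(\ast)$, this controls the integral by $C[z_0]_{C^\alpha}|x-y|\,t^{(\alpha-1)/2}$, which is $\leq C[z_0]_{C^\alpha}|x-y|^\alpha$ under $|x-y|<\sqrt t$.

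The main obstacle is the gradient estimate on $G_t$ used in this second spatial case: the termwise differentiation of the image series is routine, but care is needed so that the resulting constant remains uniform in $t\in[0,T]$ after summing the Gaussian tails (this is where the restriction $t\leq T$ enters, giving the $T$-dependence of $L$). A clean alternative that avoids the gradient estimate altogether is the probabilistic one: $G_t$ is the transition density of a reflecting Brownian motion on $[0,1]$, and the synchronous coupling of two such motions started from $x$ and $y$ satisfies $|X^x_t - X^y_t|\leq|x-y|$ almost surely, whence $|(e^{t\Delta_N}z_0)(x) - (e^{t\Delta_N}z_0)(y)| = \bigl|\E[z_0(X^x_t)-z_0(X^y_t)]\bigr|\leq [z_0]_{C^\alpha}|x-y|^\alpha$ directly; this would also sharpen the constant $L$ in the spatial bound to $[z_0]_{C^\alpha}$ itself.
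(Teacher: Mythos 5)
Your proof is correct, but it takes a genuinely different route from the paper's. The paper disposes of Lemma \ref{reg-lem2} by pointing to two options: redo the eigenfunction-expansion computations of Lemma \ref{reg-lem1} (as in \eqref{reg-eq-1234}, using $|1-e^{-n^2\pi^2(t-s)}|\le 1\wedge n^2(t-s)$), or use the probabilistic representation \eqref{reg-eq11} of $e^{t\Delta_N}$ through reflecting Brownian motion together with the H\"older regularity of its paths and of $z_0$. You instead argue directly on the kernel: the conservation-of-mass identity $\int_0^1 G_t(x,\xi)\,d\xi=1$ lets you center the integrand, the Gaussian comparison \eqref{prop-A1} yields the near-identity estimate of order $t^{\alpha/2}$, and the two H\"older bounds then follow from the semigroup property plus the contractivity \eqref{prop-S2} (in time) and from the dichotomy $|x-y|\gtrless\sqrt t$ plus a gradient bound on $G_t$ obtained by differentiating the image series (in space). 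This is self-contained where the paper defers to \cite{Wal86}, \cite{Kho16} and \cite{BCS04}, and it makes explicit where the dependence of $L$ on $T$ enters (uniformity of the summed Gaussian tails for $t\le T$); the price is the extra kernel gradient estimate, which the paper's two routes avoid. Your closing remark on the synchronous coupling is essentially the paper's probabilistic argument for the spatial bound made precise: the paper only invokes H\"older continuity of the reflecting Brownian motion, whereas you identify the non-expansiveness $|X^x_t-X^y_t|\le|x-y|$ of the coupled reflections as the mechanism, which indeed gives the sharper constant $[z_0]_{C^\alpha}$ there. All steps check out; in particular the case split correctly converts $|x-y|\,t^{(\alpha-1)/2}$ into $|x-y|^{\alpha}$ when $|x-y|<\sqrt t$.
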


\begin{proof}
A proof can be obtained by the same approach as that of Lemma \ref{reg-lem1} by using the eigenfunction expansion formula for Neumann heat kernel $G_t(x,y)$.
The reader can find similar details in \cite{Wal86}.
We provide a sketch of an alternative proof, which is interesting in its own right.
The proof is based on the relationship between heat dynamics and Brownian motion or the probabilistic solution of a PDE problem (specifically, heat equation with Neumann boundary condition).

Let us recall some facts for the case of Dirichlet boundary condition first.
Let $G^D_t(x,y)$ be the fundamental solution of heat equation on $(0,1)$ with Dirichlet boundary condition and $e^{t\Delta_D}$ be the Dirichlet heat semigroup, defined by
$$
\left(e^{t\Delta_D}u\right)(x):=\int_0^1 G_{t}^D(x,y)u(y)dy.
$$
Then $G_t^D(x,y)$ describes the transition densities of a
Brownian motion killed upon reaching $\{0, 1\}$ and for each $u$, $e^{t\Delta_D}u(x)$ will be the expectation of a functional of a Brownian motion killed on boundary given the initial condition $x$.
More precisely, if we let $B(t)$ be a standard one-dimensional Brownian motion and $\tau$ be the first time $t$ that $B(2t)$ exists $(0,1)$, then we have (see e.g., \cite[Chapter 2, Section 7]{Bas98})
\begin{equation}\label{reg-eq10}
e^{t\Delta_D}u(x)=\E_x\left(u(B_{2t});\tau<t\right),
\end{equation}
\textcolor{blue}{where $\E_x$ denotes the expectation with initial value $x$}.
 It is well known  that the fundamental solution of heat equation in $\R$ is the density of a Gaussian distribution.
When the dynamics is restricted in a domain with zero boundary condition, it should be associated with the Brownian motion killed on the boundary.
Note that the
operator $\frac12\Delta$
corresponds
to the standard Brownian motion.
That is why we have to scale the time index as above.
Because of \eqref{reg-eq10} and the (local) H\"older continuity with any exponent $<\frac 12$ of Brownian motion and $\alpha$-H\"older continuity of $z_0$, we  obtain  the conclusion in Lemma \ref{reg-lem2} for Dirichlet boundary condition case.
The detail of these arguments can be found  for example, in \cite[Proofs of Lemma 4.4 and Lemma 4.5]{Kho16}.

Coming back to our own case,
thanks to \cite[Theorem 2.5]{BCS04}, a similar expression to \eqref{reg-eq10} is obtained for the case of Neumann boundary condition by replacing the Brownian motion killed on boundary by the Brownian motion reflecting on boundary, namely, reflecting Brownian motion (RBM).
By \cite[Theorem 2.5]{BCS04}, for example, if we let $B_R$ be a one-dimensional reflecting Brownian motion in $(0,1)$ (\cite[Theorem 2.1]{BCS04} for definition), then
\begin{equation}\label{reg-eq11}
e^{t\Delta_N}u(x)=\E_x\left(u(B_R(2t)\right).
\end{equation}
Moreover, it is also noted that
$B_R(t)$ is local H\"older continuous with any exponent $<\frac 12$; see e.g., \cite[Section 2]{Tan79} and also \cite{LS84}.
Therefore, the expression \eqref{reg-eq11}, the local H\"older continuity of RBM, and the $\alpha$-H\"older continuity of $z_0$ yields the results in the Lemma as in the case of Dirichlet boundary condition.
\end{proof}

\begin{thm}\label{reg-thm2}
Assume that the non-negative initial value $Z_0$ is $\alpha$-H\"older continuous, for some $\alpha\in (0,1]$. On a compact set of time containing $t=0$, the solution $Z(t,x)$ of \eqref{eq-main} is H\"older continuous in space with exponent $<\alpha\wedge\frac 12$ and is H\"older continuous in time with exponent $<\frac{\alpha}2\wedge\frac 14$.
That is, for any $0<T<\infty$, and $\beta_1\in (0,\alpha\wedge1/2)$, $\beta_2\in (0,\alpha/2\wedge1/4)$, there is a finite random variable $C_{H}=C_H(T,\beta_1,\beta_2)$ such that
$$
|Z(t,x)-Z(s,y)|\leq C_H\left(|x-y|^{\beta_1}+|t-s|^{\beta_2}\right),\quad\forall x,y\in [0,1], s,t\in [0,T]\a.s
$$
\end{thm}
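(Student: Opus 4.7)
The plan is to decompose the mild solution as $Z(t,x) = A_0(t,x) + A_1(t,x) + \gamma(Z)(t,x)$ exactly as in \eqref{reg-eq1} and analyze each piece separately on a compact time interval that includes $t=0$. The key observation is that among the three summands, only the initial-data term $A_0(t,x)=(e^{t\Delta_N}Z_0)(x)$ degenerates at $t=0$ in the argument of Theorem \ref{reg-thm1}; the other two terms, being time integrals starting at $0$, are already well behaved down to the origin. Hence the job splits into re-using Theorem \ref{reg-thm1}'s estimates for $A_1$ and $\gamma(Z)$ on $[0,T]$ and giving a fresh (but now easy) argument for $A_0$ that exploits the assumed $\alpha$-Hölder continuity of $Z_0$.

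For $A_0$ I would invoke Lemma \ref{reg-lem2} directly. Since $Z_0$ is $\alpha$-Hölder continuous, the Lemma yields deterministic pointwise bounds
$$|A_0(t,x)-A_0(t,y)|\leq L|x-y|^{\alpha},\qquad |A_0(t,x)-A_0(s,x)|\leq L|t-s|^{\alpha/2},$$
uniformly in $s,t\in[0,T]$ and $x,y\in[0,1]$. This single step produces Hölder exponent $\alpha$ in space and $\alpha/2$ in time for $A_0$, and is the sole source of the degraded exponents appearing in the statement.

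For $A_1$ I would reuse the computations from the proof of Theorem \ref{reg-thm1}: the spatial increment is controlled through \eqref{reg-eq-1}, while the temporal increment splits into a piece over $[s,t]$ handled by \eqref{reg-eq-2} and a kernel-difference piece handled by \eqref{reg-eq-2-11}. Crucially, none of these three bounds involves a constant that blows up as the lower time endpoint goes to $0$. Combined with the moment bound $\E\sup_{[0,T]}|Z|_E^p<\infty$ from Theorem \ref{thm-exi} and Kolmogorov's criterion (Proposition \ref{reg-prop1}), this gives Hölder exponents $<1/2$ in space and $<1/4$ in time for $A_1$ on the whole $[0,T]\times[0,1]$. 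A parallel computation using Burkholder-Davis-Gundy together with the factorization argument and Sobolev embedding from Proposition \ref{s3-prop-1} gives the same exponents for $\gamma(Z)$, and again is uniform down to $t=0$.

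Adding the three contributions, the solution $Z$ inherits the worst Hölder regularity among them, namely $\beta_1<\alpha\wedge(1/2)$ in space and $\beta_2<(\alpha/2)\wedge(1/4)$ in time, which is exactly the conclusion of the theorem. The only real obstacle is bookkeeping: one must carefully check that every kernel estimate recycled from Theorem \ref{reg-thm1} avoids the comparison \eqref{reg-eq-2-00} (whose constant $C=C(T_1,T_2)$ diverges as $T_1\downarrow 0$) and uses only the $t=0$-admissible bounds \eqref{reg-eq-1}, \eqref{reg-eq-2}, \eqref{reg-eq-2-11}. Once that is verified, Lemma \ref{reg-lem2} absorbs all of the delicate near-$t=0$ behavior of the semigroup and the remainder of the argument transfers from Theorem \ref{reg-thm1} essentially verbatim.
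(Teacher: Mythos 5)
Your proposal is correct and follows exactly the route the paper intends: the paper's own proof simply invokes Lemma \ref{reg-lem2} to absorb the singularity of the semigroup at $t=0$ and then declares the rest "similar to Theorem \ref{reg-thm1}", which is precisely your decomposition into $A_0$, $A_1$, and $\gamma(Z)$ with the initial-data term handled by the Lemma and the other two terms recycled. Your explicit bookkeeping remark --- that one must use only \eqref{reg-eq-1}, \eqref{reg-eq-2}, \eqref{reg-eq-2-11} and avoid \eqref{reg-eq-2-00} near $t=0$ --- is a useful detail the paper leaves implicit.
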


\begin{proof}[Proof of Theorem \ref{reg-thm2}]
Once we have Lemma \ref{reg-lem2} in hand, we are able to take care of the singularity of $e^{t\Delta}$ at $0$.
Hence, the proof of this Theorem is similar to that of Theorem \ref{reg-thm1} and is left to the reader.
Note that as in proof of Theorem \ref{reg-thm1},
we have already established the boundedness (on compact interval of time) of solutions on the space of continuous function with any order.

\end{proof}

\section{Existence of density}\label{sec:den}
This section is devoted to  the existence of densities of $U(t,x),V(t,x)$.
By using Malliavin calculus, we prove that
for any $t>0$, $x\in (0,1)$, $U(t,x)$ and $V(t,x)$ have absolutely continuous laws with respect to the Lebesgue measure and hence
possess densities.
It is noted that the coefficients
 for our system
 are neither Lipschitz continuous nor having linear growth.
Some notation and preliminary results in the Malliavin calculus used in this Section are given in Section \ref{sec:Amal}.

\begin{prop}\label{den-prop3}
Suppose that $(U_0,V_0)\in E$, $U_0,V_0\geq 0$.
For any $t>0$ and $x\in (0,1)$, $U(t,x)$ and $V(t,x)$ belong to $\D^{1,2}_{\text{loc}}$.
\end{prop}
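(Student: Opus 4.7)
The plan is to use the truncation scheme from the proof of Theorem \ref{thm-exi} to reduce the problem to a globally Lipschitz setting, establish Malliavin differentiability at the truncated level, and then localize via the stopping times $\zeta_n$ introduced in \eqref{tau}. More precisely, I will proceed as follows.

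First, recall the sequence of truncated solutions $Z_n(t) = (U_n(t), V_n(t))$ to \eqref{Z_n}, in which the drift $F_n$ is globally Lipschitz continuous (uniformly in $x \in [0,1]$) and the diffusion multiplier $(u,v) \mapsto (\sigma_1 u, \sigma_2 v)$ is linear, hence globally Lipschitz. For fixed $n$, these two features put us inside the classical Malliavin framework for mild solutions to SPDEs driven by a space-time white noise in the Walsh sense; by the standard arguments of Nualart--Pardoux type (see, e.g., Chapter 2 of Nualart's book and Section \ref{sec:Amal}), each random variable $U_n(t,x)$ and $V_n(t,x)$ belongs to $\mathbb{D}^{1,2}$ (in fact to $\mathbb{D}^{1,p}$ for every $p \ge 1$), and their Malliavin derivatives $D^{(i)}_{r,y} U_n(t,x)$, $D^{(i)}_{r,y} V_n(t,x)$ (for $i=1,2$ indicating which of $W_1,W_2$ one differentiates with respect to) satisfy a linear system of random-field equations obtained by formally differentiating the mild formulation \eqref{eq-mild-random}. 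Because $F_n$ is only Lipschitz and not $C^1$, this step is carried out by convolving $F_n$ with a mollifier $\rho_\delta$ to obtain a smooth $F_n^\delta$, applying the chain rule for $\mathbb{D}^{1,2}$ to the corresponding mild solution $Z_n^\delta$, and then sending $\delta \downarrow 0$; the uniform bound in $\mathbb{D}^{1,2}$ comes from a Gronwall argument analogous to the one used in Proposition \ref{s3-prop-1} and Lemma \ref{lem-bounded}, while the a.s.\ convergence $Z_n^\delta \to Z_n$ is inherited from the Lipschitz contraction.

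Next, I localize. Define $\Omega_n := \{\zeta_n > t\} \in \F_t$. By the construction of $Z$ in the proof of Theorem \ref{thm-exi}, the argument following \eqref{wel-eq-def} shows that for $\omega \in \Omega_n$ one has $U(t,x)(\omega) = U_n(t,x)(\omega)$ and $V(t,x)(\omega) = V_n(t,x)(\omega)$, and moreover
\[
\PP\Big(\bigcup_{n=1}^\infty \Omega_n\Big) = \PP\{\zeta = \infty\} = 1,
\]
so that $(\Omega_n, U_n(t,x))_{n\ge 1}$ and $(\Omega_n, V_n(t,x))_{n\ge 1}$ are localizing sequences in the sense of the definition of $\mathbb{D}^{1,2}_{\text{loc}}$ recalled in Section \ref{sec:Amal}. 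Combined with the step above, this yields $U(t,x), V(t,x) \in \mathbb{D}^{1,2}_{\text{loc}}$.

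The main obstacle is the Malliavin differentiability of the truncated solution $Z_n$: although $F_n$ is globally Lipschitz, it has corners on the sphere of radius $n$ in $\R^2$, so the chain rule cannot be applied directly. The resolution is the mollification $F_n^\delta = F_n \ast \rho_\delta$, followed by a careful passage to the limit $\delta \downarrow 0$ using the uniform (in $\delta$) $L^p(\Omega;C([0,T],E))$-estimate from Proposition \ref{s3-prop-1} and Lemma \ref{lem-bounded}, together with the closability of the Malliavin derivative on bounded subsets of $\mathbb{D}^{1,2}$. The non-Lipschitz, superlinear nature of the original drift $F$ never enters the Malliavin estimates; it is entirely absorbed by the localization via $\zeta_n$, which is exactly why only $\mathbb{D}^{1,2}_{\text{loc}}$ (and not $\mathbb{D}^{1,2}$) can be expected here.
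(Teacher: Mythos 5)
Your proposal is correct and follows essentially the same route as the paper: reduce to a globally Lipschitz truncated system, invoke Malliavin differentiability of the truncated mild solutions via \cite[Proposition 2.4]{PT93}-type arguments, and localize using the stopping times $\zeta_n$ from Theorem \ref{thm-exi}. The only (cosmetic) divergence is that the paper takes the truncation to be smooth with bounded derivatives from the outset, so the derivative equation \eqref{den-eq-Dh} applies directly and membership in $\D^{1,2}$ is checked by summing $\E|D_{h_k}\cdot|^2$ over an orthonormal basis with a Gronwall argument, whereas you keep the radial truncation of Theorem \ref{thm-exi} and repair its corners by mollification plus closability of $D$ --- both are fine.
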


\begin{proof}
In Section \ref{sec:wel}, we have already known that for any $T>0$, $p\geq 1$,
\begin{equation}\label{s6-eq-bounded}
\E \Big(\sup_{t\in[0,T]}\sup_{x\in[0,1]}|U(t,x)|^p+|V(t,x)|^p\Big)<\infty.
\end{equation}
Therefore, by Proposition \ref{den-prop1}, we can assume that the coefficients together with their derivatives are bounded
in the proof of this Proposition (with
$U(t,x),V(t,x)\in\D^{1,2}$).
In fact,
we can truncate the coefficients similarly to the process in the proof of Theorem \ref{thm-exi} such that the truncated functions are bounded together with their derivatives and coincide with the original coefficients in finite balls.

It is shown in \cite[Proposition 2.4]{PT93} that
by approximating PDEs
using finite elements method and then, approximating SDEs driven by infinite dimensional noise by SDEs driven by finite dimensional noise (alternatively, see  \cite[Section 4]{BP98}), one has that
for any $h(t,x)=\rho(t)e_i(x)$, where $\rho(t)$ is some function satisfying \textcolor{blue}{$\rho(t)\in L^2(\R_+)$},
$U(t,x),V(t,x)\in \D^h$ and
$D_hU(t,x)$,  $D_hV(t,x)$ is
the solution of the following SPDE
\begin{equation}\label{den-eq-Dh}
\begin{cases}
\displaystyle D_hU(t,x)=\int_0^t\int_0^1 G_{t-s}(x,y)\sigma_1(y)U(s,y)h(s,y)dyds\\[2ex]
\displaystyle\hspace{0.5cm}+\int_0^t\int_0^1 G_{t-s}(x,y)\Big(\frac{\partial F_1}{\partial U}(U,V)D_hU(s,y)+\frac{\partial F_1}{\partial V}(U,V)D_hV(s,y)\Big)dyds\\[2ex]
\displaystyle\hspace{0.5cm}+\int_0^t\int_0^1 G_{t-s}(x,y)\sigma_1(y)D_hU(s,y)W_1(ds,dy)\\[2ex]
\displaystyle D_hV(t,x)=\int_0^t\int_0^1 G_{t-s}(x,y)\sigma_2(y)V(s,y)h(s,y)dyds\\[2ex]
\displaystyle\hspace{0.5cm}+\int_0^t\int_0^1 G_{t-s}(x,y)\Big(\frac{\partial F_2}{\partial U}(U,V)D_hU(s,y)+\frac{\partial F_2}{\partial V}(U,V)D_hV(s,y)\Big)dyds\\[2ex]
\displaystyle\hspace{0.5cm}+\int_0^t\int_0^1 G_{t-s}(x,y)\sigma_2(y)D_hV(s,y)W_2(ds,dy).
\end{cases}
\end{equation}
It remains to show that if $\{h_k\}_{k=1}^\infty$ is an orthonormal
basis of $L^2(\R_+\times(0,1))$, then
$$
\sum_{k=1}^\infty \E\left(|D_{h_k}U(t,x)|^2\right)<\infty,\quad
\sum_{k=1}^\infty \E\left(|D_{h_k}V(t,x)|^2\right)<\infty.
$$
Using the
assumption on boundedness of $F_1,F_2$ as well as their derivatives in this Proposition, we have that
\begin{equation}\label{eq-7-27-11}
\begin{aligned}
\E \Big(&|D_{h_k}U(t,x)|^2+|D_{h_k}V(t,x)|^2\Big)\\
\leq&\; c\; \E\int_0^t\int_0^t G_{t-s}^2(x,y)\Big((D_{h_k}U(s,y))^2+(D_{h_k}U(s,y))^2\Big)dyds\\
&+c \;\E\left(\int_0^t\int_0^t G_{t-s}(x,y)\Big(U(s,y)+V(s,y)\Big)dyds\right)^2.
\end{aligned}
\end{equation}
In the above, the stochastic integral is estimated with the help of the Burkholder-Davis-Gundy inequality as in Proposition \ref{s3-prop-1}.
Let $$N_m(t):=\sup_{x\in[0,1]} \E\sum_{k=1}^m \Big(|D_{h_k}U(t,x)|^2+|D_{h_k}V(t,x)|^2\Big).$$
The boundedness of $U(s,y),V(s,y)$ developed in Section \ref{sec:wel} (see also \eqref{s6-eq-bounded}) and property \eqref{prop-A1} of $G_{t}(x,y)$ allows us to obtain from \eqref{eq-7-27-11} that
$$
\begin{aligned}
N_m(t)\leq& c\int_0^t\int_0^1 G_{t-s}^2(x,y)N_m(s)dyds+c\int_0^t\int_0^1 G_{t-s}^2(x,y)dyds\\
\leq & c\Big(1+\int_0^t \frac{N_m(s)}{\sqrt{t-s}}ds\Big)\leq c\Big(1+\int_0^tN_m(s)ds\Big),
\end{aligned}
$$
where $c$ is a finite constant, independent of $m$.
As a consequence, $N_m(t)\leq ce^{ct}, \forall m$ and thus,
$$\sup_{x\in[0,1]} \E\sum_{k=1}^\infty \Big(|D_{h_k}U(t,x)|^2+|D_{h_k}V(t,x)|^2\Big)<\infty.$$
 The proof is complete.
\end{proof}

\begin{thm}\label{den-thm1}
Suppose that the initial value $(U_0,V_0)\in E$, $U_0,V_0\geq 0$ such that $\sigma_1U_0,\sigma_2V_0\geq 0$ but not identical to $0$.
For each $t>0$ and $x\in (0,1)$, the law of $U(t,x)$ and $V(t,x)$ are absolutely continuous with respect to the Lebesgue measure.
\end{thm}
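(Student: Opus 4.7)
The plan is to invoke the Bouleau--Hirsch absolute continuity criterion recalled in Section~\ref{sec:Amal}: a random variable lying in $\D^{1,2}_{\text{loc}}$ whose Malliavin derivative has strictly positive norm in the Cameron--Martin space almost surely admits a density with respect to Lebesgue measure. Proposition~\ref{den-prop3} supplies the required differentiability, so the whole task reduces to showing that almost surely
\begin{equation*}
\sum_{i=1}^{2}\int_{0}^{t}\!\!\int_{0}^{1}\bigl|D^{i}_{s,y}U(t,x)\bigr|^{2}\,dy\,ds>0,
\end{equation*}
and analogously for $V(t,x)$.

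From equation~\eqref{den-eq-Dh}, the Malliavin derivative with respect to the first noise decomposes as
\begin{equation*}
D^{1}_{s,y}U(t,x)=G_{t-s}(x,y)\sigma_{1}(y)U(s,y)+R(s,y;t,x),
\end{equation*}
where $R$ collects the drift correction and the stochastic-integral term containing $D^{1}U$ and $D^{1}V$ themselves. I would localize the analysis to a small window $s\in[t-\varepsilon,t]$ on which the kernel concentrates near $x$. Using \eqref{prop-A1}--\eqref{prop-A2}, the joint continuity of $U$ provided by Theorem~\ref{reg-thm1}, and the strict positivity $U(t,x)>0$ for $t>0$ from Proposition~\ref{s3-prop-pos}, the leading term obeys almost surely
\begin{equation*}
\int_{t-\varepsilon}^{t}\!\!\int_{0}^{1}G_{t-s}^{2}(x,y)\sigma_{1}^{2}(y)U^{2}(s,y)\,dy\,ds\;\geq\;c\,\sigma_{1}^{2}(x)\,U^{2}(t,x)\,\varepsilon^{1/2}
\end{equation*}
for all sufficiently small $\varepsilon$; the hypothesis $\sigma_{1}U_{0}\not\equiv 0$, combined with the strong-positivity argument already used in Proposition~\ref{s3-prop-pos}, guarantees $\sigma_{1}(x)U(t,x)>0$ on a set of full measure in $x$.

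The remaining step is to show that the remainder is of strictly smaller order:
\begin{equation*}
\mathbb{E}\!\int_{t-\varepsilon}^{t}\!\!\int_{0}^{1}|R(s,y;t,x)|^{2}\,dy\,ds=o(\varepsilon^{1/2})\quad\text{as }\varepsilon\downarrow 0.
\end{equation*}
This is obtained by applying the Burkholder--Davis--Gundy inequality to the stochastic term in \eqref{den-eq-Dh}, feeding in the moment bounds on $DU$ and $DV$ established inside the proof of Proposition~\ref{den-prop3}, and exploiting the integrability \eqref{prop-A2} of $G^{q}$ for $q<3$. Combining the two estimates produces (a possibly random) $\varepsilon_{0}>0$ for which the Malliavin norm restricted to $[t-\varepsilon_{0},t]\times(0,1)$ is strictly positive, so Bouleau--Hirsch applies and yields the absolute continuity of the law of $U(t,x)$. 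The argument for $V(t,x)$ is symmetric, using $W_{2}$ and the hypothesis $\sigma_{2}V_{0}\not\equiv 0$.

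The hard part will be the sharp $o(\varepsilon^{1/2})$ control of $R$: because the linear system for $(D^{1}U,D^{1}V)$ is coupled through the \emph{non-Lipschitz} drift $F_{i}$, one must carry along the truncation already used in Proposition~\ref{den-prop3} (working on the event that $|Z(t)|_{E}$ stays bounded, which has probability tending to one as the truncation level grows), and iterate a Gronwall estimate driven by \eqref{prop-A2} to beat the $\varepsilon^{1/2}$ scale set by the dominant term before removing the truncation.
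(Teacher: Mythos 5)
Your overall strategy (Proposition~\ref{den-prop3} plus the local absolute-continuity criterion of Proposition~\ref{den-prop2}, so that everything reduces to $\|DU(t,x)\|_{L^2(\R_+\times(0,1))}>0$ a.s.) matches the paper, but the way you try to establish the non-degeneracy is the classical small-time argument of Bally--Pardoux/Nualart, and it has a genuine gap here: your lower bound for the leading term is
$\int_{t-\eps}^{t}\int_0^1 G_{t-s}^2(x,y)\sigma_1^2(y)U^2(s,y)\,dy\,ds \geq c\,\sigma_1^2(x)U^2(t,x)\,\eps^{1/2}$,
which is useless precisely when $\sigma_1(x)=0$ at the fixed spatial point $x$. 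The hypothesis of the theorem is only that $\sigma_1 U_0\not\equiv 0$; it does not force $\sigma_1$ to be nonvanishing at the given $x$ (take $\sigma_1$ supported in $[0,1/2]$ and $x\in(1/2,1)$). At such a point the continuity of $U$ makes the ``leading'' term itself $o(\eps^{1/2})$, so it can no longer dominate the remainder and the whole comparison collapses, even though the theorem still asserts existence of a density there. Your fallback to ``$\sigma_1(x)U(t,x)>0$ on a set of full measure in $x$'' is both unjustified (it would require $\sigma_1\neq 0$ a.e., which does not follow from $\sigma_1U_0\not\equiv 0$) and strictly weaker than the stated ``for each $x\in(0,1)$''. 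A secondary, fixable issue: an expectation bound $\E\int_{t-\eps}^t\int_0^1|R|^2=o(\eps^{1/2})$ does not by itself produce a random $\eps_0$ on which the pathwise inequality holds; one must instead use the inclusion $\{\|DU(t,x)\|^2=0\}\subseteq\{I_1(\eps)\leq 2I_2(\eps)\}$ for every $\eps$ and pass to the limit with Chebyshev.

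The paper avoids the degeneracy problem entirely by integrating the Malliavin derivative over the space variable: setting $U_D(\theta;t,x)=\int_0^1 D_{\theta,\xi}U(t,x)\,d\xi$, one finds from \eqref{den-eq-duv} that $(U_D,V_D)$ solves the linear SPDE \eqref{den-eq-9}, which has the same structure as \eqref{eq-main} with non-negative, not identically vanishing data $\sigma_1 U(\theta,\cdot)$ at time $\theta$. The strict positivity result for such linear equations (as in Proposition~\ref{s3-prop-pos}, following Pardoux--Zhang and Mueller--Nualart) then gives $U_D(\theta;t,x)>0$ for every $x$ and every $t>\theta$, because the heat kernel spreads the positivity of $\sigma_1 U(\theta,\cdot)$ instantaneously to all of $(0,1)$; this yields $\int_0^t\int_0^1|D_{\theta,\xi}U(t,x)|\,d\xi\,d\theta>0$ without any pointwise non-degeneracy of $\sigma_1$. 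If you want to salvage your route, you would have to add the hypothesis $\sigma_1(x)\neq 0$ at the point under consideration (in which case your argument, with the Chebyshev step repaired, does go through under the localization to bounded derivatives); to prove the theorem as stated you need the paper's positivity mechanism or an equivalent substitute.
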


\begin{proof}
Let $t>0$ and $x\in (0,1)$ be fixed. Using a standard localization procedure,
it suffices to prove the results under the assumption that $F_1$ and $F_2$ have bounded derivatives.
We deduce from the proof of Proposition \ref{den-prop3} (or see \cite[Remark 4.2]{BP98}) that if $\theta<t$,
 $D_{\theta,\xi}U(t,x),D_{\theta,\xi}V(t,x)$ is the solution of
\begin{equation}\label{den-eq-duv}
\begin{cases}
\displaystyle D_{\theta,\xi}U(t,x)=G_{t-\theta}(x,\xi)\sigma_1(\xi)U(\theta,\xi)\\[1.5ex]
\displaystyle\hspace{0.5cm}+\int_\theta^{t}\int_0^1 G_{t-s}(x,y)\Big(\frac{\partial F_1}{\partial U}(U,V)D_{\theta,\xi}U(s,y)+\frac{\partial F_1}{\partial V}(U,V)D_{\theta,\xi}V(s,y)\Big)dyds\\[1.5ex]
\displaystyle\hspace{0.5cm}+\int_\theta^t\int_0^1 G_{t-s}(x,y)\sigma_1(y)D_{\theta,\xi}U(s,y)W_1(ds,dy)\\
\displaystyle D_{\theta,\xi}V(t,x)=G_{t-\theta}(x,\xi)\sigma_2(\xi)V(\theta,\xi)\\
\displaystyle\hspace{0.5cm}+\int_\theta^t\int_0^1 G_{t-s}(x,y)\Big(\frac{\partial F_2}{\partial U}(U,V)D_{\theta,\xi}U(s,y)+\frac{\partial F_2}{\partial V}(U,V)D_{\theta,\xi}V(s,y)\Big)dyds\\
\displaystyle\hspace{0.5cm}+\int_\theta^t\int_0^1 G_{t-s}(x,y)\sigma_2(y)D_{\theta,\xi}V(s,y)W_2(ds,dy).
\end{cases}
\end{equation}
and if $\theta>t$ , $D_{\theta,\xi}U(t,x)=D_{\theta,\xi}V(t,x)=0$.
It can be seen that
$$
\|DU(t,x)\|>0\iff \int_0^t\int_0^1 |D_{\theta,\xi}U(t,x)|d\xi d\theta>0.
$$
Let
$$
U_D(\theta;t,x):= \int_0^1 D_{\theta,\xi}U(t,x)d\xi,\quad V_D(\theta;t,x):= \int_0^1 D_{\theta,\xi}V(t,x)d\xi.
$$
Then, $U_D(\theta;t,x)$, $V_D(\theta;t,x)$ is the solutions to
\begin{equation}\label{den-eq-9}
\begin{cases}
\displaystyle U_D(\theta;t,x)=\int_0^1 G_{t-\theta}(x,y)\sigma_1(y)U(\theta,y)dy\\[1.5ex]
\displaystyle\hspace{0.5cm}+\int_\theta^{t}\int_0^1 G_{t-s}(x,y)\Big(\frac{\partial F_1}{\partial U}(U,V)U_D(\theta;s,y)+\frac{\partial F_1}{\partial V}(U,V)V_D(\theta;s,y)\Big)dyds\\[1.5ex]
\displaystyle\hspace{0.5cm}+\int_\theta^t\int_0^1 G_{t-s}(x,y)\sigma_1(y)U_D(\theta;s,y)W_1(ds,dy)\\[1.5ex]
\displaystyle V_D(\theta;t,x)=\int_0^1 G_{t-\theta}(x,y)\sigma_2(y)V(\theta,y)dy\\
\displaystyle\hspace{0.5cm}+\int_\theta^t\int_0^1 G_{t-s}(x,y)\Big(\frac{\partial F_2}{\partial U}(U,V)U_D(\theta;s,y)+\frac{\partial F_2}{\partial V}(U,V)V_D(\theta;s,y)\Big)dyds\\
\displaystyle\hspace{0.5cm}+\int_\theta^t\int_0^1 G_{t-s}(x,y)\sigma_2(y)V_D(\theta;s,y)W_2(ds,dy),
\end{cases}
\end{equation}
It is noted again that as  at the beginning of the proof of this Proposition, we can assume that $F_1$ and $F_2$ are smooth functions with bounded derivatives.
Moreover, under this assumption, as in \cite[Proposition 3.1]{PT93}
or \cite[Theorem 2]{MN08} (see also Proposition \ref{s3-prop-pos} in Section \ref{sec:wel}), one has that for any
  $t>\theta$, $x\in(0,1)$, $U_D(\theta;t,x)>0$, $V_D(\theta;t,x)>0\a.s$
Therefore, applying Proposition \ref{den-prop2} yields the desired result.
\end{proof}

\section{Existence of invariant measure}\label{sec:inv}
 From an application point of view in general and a biological point of view in particular,
the longtime behavior is one of the most important properties.
 A fundamental question in investigating the longtime behavior is whether an invariant measure exists.
This section is devoted to answering this question.

In what follows, we will consider the process $Z^z(t)$ on
$E_+:=\{(u,v)\in E: u,v\geq 0\}$ ($E_+$ is a Polish space since it is closed subset of $E$).
Non-negativity of solutions (Theorem \ref{thm-exi}) guarantees that $Z^z(t)\in E_+,\forall t\geq 0\a.s$ provided $z\in E_+$.
We first recall some notation and preliminaries as in \cite{Cer03} as follows.
Let $B_b(E_+)$ be the Banach space of bounded measurable functions $\varphi:E_+\to\R$ endowed with the sup-norm
$$
\|\varphi\|_0=\sup_{z\in \textcolor{blue}{E_+}}|\varphi(z)|,
$$
and $C_b(E_+)$ be the subspace of $B_b(E_+)$ containing continuous functions,
and $C_b^1(E_+)$ be the Banach space of differentiable functions $\varphi:E_+\to \R$ having continuous and bounded derivatives.
We define the transition semigroup $P_t$ associated with system \eqref{eq-main} as follow.
For any $z\in E_+$, $t\geq 0$, $\varphi\in B_b(E_+)$, define
$$
P_t\varphi(z):=\E\varphi(Z^z(t)),
$$
where $Z^z(t)=(U(t),V(t))$ is the solution of \eqref{eq-main} with initial condition $Z(0)=z$.

\begin{prop}
	The transition semigroup $P_t$ is Feller.
\end{prop}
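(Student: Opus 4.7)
The plan is to verify the Feller property directly from the definition, namely that $P_t$ maps $C_b(E_+)$ into itself. The main tool is the continuous dependence on initial data already established in Proposition \ref{wel-prop-2}, which hands us essentially all we need with very little further work.

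Fix $t \geq 0$ and $\varphi \in C_b(E_+)$. Since $P_t\varphi$ is clearly bounded by $\|\varphi\|_0$, we only need to show continuity of $z \mapsto P_t\varphi(z)$. Take a sequence $z_n \to z$ in $E_+$. Because convergent sequences are bounded, the points $z_n$ and $z$ lie in a common bounded subset of $E_+$, so Proposition \ref{wel-prop-2} applies uniformly on this bounded set and yields, for any chosen $p \geq 1$,
\begin{equation*}
\mathbb{E}\sup_{s \in [0,T]} |Z^{z_n}(s) - Z^z(s)|_E^p \longrightarrow 0 \quad \text{as } n \to \infty.
\end{equation*}

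First I would extract from this $L^p$ convergence in $C([0,T],E)$ the pointwise-in-time convergence $Z^{z_n}(t) \to Z^z(t)$ in $L^p(\Omega; E)$, and hence in probability. By passing to a subsequence if necessary, we may assume $Z^{z_{n_k}}(t) \to Z^z(t)$ almost surely in $E$. The continuity of $\varphi: E_+ \to \mathbb{R}$ then gives $\varphi(Z^{z_{n_k}}(t)) \to \varphi(Z^z(t))$ a.s., and the uniform bound $|\varphi| \leq \|\varphi\|_0$ lets me invoke dominated convergence to conclude
\begin{equation*}
P_t\varphi(z_{n_k}) = \mathbb{E}\varphi(Z^{z_{n_k}}(t)) \longrightarrow \mathbb{E}\varphi(Z^z(t)) = P_t\varphi(z).
\end{equation*}
A standard subsequence-of-subsequence argument upgrades this to convergence of the whole sequence $P_t\varphi(z_n)$, proving continuity of $P_t\varphi$ at the arbitrary point $z$.

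There is essentially no hard step here, since the heavy lifting was done in Proposition \ref{wel-prop-2}; the only thing to be mindful of is the uniformity requirement in that proposition, which is why I stressed that $\{z_n\} \cup \{z\}$ sits in a bounded subset of $E_+$. Conceptually, the Feller property is merely the statement that the map $z \mapsto \mathbb{E}\varphi(Z^z(t))$ inherits continuity from the continuity of $z \mapsto Z^z(t)$ in probability and the continuity and boundedness of $\varphi$; the measurability side (that $P_t$ preserves boundedness and measurability) is automatic.
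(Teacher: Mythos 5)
Your proof is correct, and it takes a genuinely different route from the paper's. The paper first establishes continuity of $P_t\varphi$ for $\varphi\in C^1_b(E_+)$ via the Lipschitz-type estimate $|P_t\varphi(z_1)-P_t\varphi(z_2)|\leq \|\varphi\|_{C^1_b}\,\E|Z^{z_1}(t)-Z^{z_2}(t)|$, and then passes to general $\varphi\in C_b(E_+)$ by approximating $\varphi$ uniformly by a sequence in $C_b^1(E_+)$. You instead work with an arbitrary $\varphi\in C_b(E_+)$ directly: continuous dependence on initial data (Proposition \ref{wel-prop-2}) gives convergence of $Z^{z_n}(t)$ to $Z^z(t)$ in probability, and you conclude by extracting an a.s.\ convergent subsequence, applying continuity of $\varphi$ together with dominated convergence, and then using the subsequence-of-subsequences principle. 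Both arguments lean on Proposition \ref{wel-prop-2} as the sole substantive input, but yours has a technical advantage: it avoids the claim that $C_b^1(E_+)$ is dense in $C_b(E_+)$ for the sup-norm, which is a delicate matter on an infinite-dimensional Banach space (uniform approximation of bounded continuous functions by differentiable ones is not automatic there), whereas your convergence-in-probability argument needs only the continuity of $\varphi$ and is therefore more robust. The one point to keep explicit is that $Z^{z_n}(t)$ takes values in $E_+$ (by the non-negativity established in Theorem \ref{thm-exi}), so that $\varphi$ may indeed be evaluated along the subsequence; you use this implicitly and it is fine.
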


\begin{proof}
It follows immediately from Proposition \ref{wel-prop-2} that for $\varphi\in C^1_b(E_+)$
$$
\begin{aligned}
|P_t\varphi(z_1)-P_t\varphi(z_2)|
&\leq \E |\varphi(Z^{z_1}(t))-\varphi(Z^{z_2}(t))|\\
&\leq \|\varphi\|_{C^1_b}\E| Z^{z_1}(t)-Z^{z_2}(t)|\to 0\text{ as }|z_1-z_2|_E\to 0.
\end{aligned}
$$
Moreover, if $\varphi\in C_b(E_+)$, we can approximate $\varphi$ (in the sup-norm) by a sequence $\{\varphi_n\}$, $\varphi_n\in C_b^1(E_+)$. Therefore, we can obtain $P_t\varphi\in C_b(E_+)$ for any $\varphi\in C_b(E_+)$ and then, the proof is complete.
\end{proof}

A probability measure $\mu$ on $(E_+,\B(E_+))$ is an invariant measure of $P_t$ if for any $t\geq 0$, $\varphi\in C_b(E_+)$,
$$
\int_{E_+} P_t\varphi(z)d\mu(z)=\int_{E_+} \varphi(z)d\mu(z).
$$
Our aim in this section is to prove that $P_t$ has an invariant measure.

The challenges come from the infinite-dimensional space, in which, a bounded set is not necessarily
relatively compact.
Since the H\"older space $C^\theta([0,1],\R^2)$ (space of $\theta$-H\"older continuous function) is compactly embedded to $E$ for any $\theta>0$, if we can prove that
\begin{equation}\label{inv-eq-test}
\sup_{t\geq t_0}\E|Z^z(t)|_{C^\theta([0,1],\R^2)}<\infty,
\end{equation}
for some $z\in E_+$, $\theta>0$, $t_0\geq 0$ then the family of (probability) measures $\{P_t(z,\cdot)\}$ is tight.
Therefore, the Krylov-Bogoliubov theorem (see e.g., \cite[Section 3.1]{PZ96}) implies the existence of an invariant measure of $P_t$.
Hence, the remaining of this Section is devoted to proof of \eqref{inv-eq-test}.

\begin{prop}\label{inv-prop2}
Assume that
\begin{equation}\label{inv-eq-1}
\inf_{x\in [0,1]}a_1(x)>0\text{ and }\inf_{x\in[0,1]}a_2(x)>0.
\end{equation}
Let $z=(U_0,V_0)\in E$, $U_0,V_0\geq 0$.
For any $p\geq 1$,
$$
\E \sup_{t\geq 0}|Z^z(t)|_E^p\leq c_p(1+|z|_E^p).
$$
\end{prop}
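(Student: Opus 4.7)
The plan is to exploit the strong dissipativity coming from $\inf_x a_i(x) > 0$: since $U, V \geq 0$,
\begin{equation*}
F_i(U,V) \;\leq\; m_i^* U - \underline{a}_i U^2 \;\leq\; (m_i^*)^2/(4\underline{a}_i),
\end{equation*}
with $m_i^* := \sup m_i$ and $\underline{a}_i := \inf a_i > 0$. Inserting this constant upper bound directly into the mild formulation yields only a $t$-linear estimate, because $e^{t\Delta_N}\mathbf{1} = \mathbf{1}$, so one must introduce an exponential tilt. For $\lambda > 0$ to be chosen large, I would use the identity $\Delta_N = (\Delta_N - \lambda) + \lambda$ to rewrite
\begin{equation*}
U(t) \;=\; e^{-\lambda t}e^{t\Delta_N}U_0 \;+\; \int_0^t e^{-\lambda(t-s)}e^{(t-s)\Delta_N}G_\lambda(U,V)(s)\,ds \;+\; W_U^\lambda(t),
\end{equation*}
where $G_\lambda(U,V) := F_1(U,V) + \lambda U = U[(\lambda + m_1) - a_1 U - b_1 V]$ and $W_U^\lambda(t) := \int_0^t e^{-\lambda(t-s)}e^{(t-s)\Delta_N}\sigma_1 U(s)\,dW_1(s)$. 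The same dissipativity gives $G_\lambda(U,V) \leq (\lambda+m_1^*)^2/(4\underline{a}_1) =: K_\lambda$, so by positivity preservation and $\int_0^t e^{-\lambda(t-s)}\,ds \leq 1/\lambda$, the deterministic part is uniformly bounded by $|U_0|_E + K_\lambda/\lambda$, independent of $t$.

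The remaining task is to bound the moments of $W_U^\lambda$ by a \emph{strict contraction} of the moments of $U$ itself. For this I would repeat the factorization argument of Proposition \ref{s3-prop-1} verbatim but carry the weight $e^{-\lambda(t-s)}$ through each step (H\"older on the outer convolution, Sobolev embedding $W^{\eps,p}((0,1)) \hookrightarrow C([0,1])$, Burkholder--Davis--Gundy on the tilted martingale, and Parseval). The key improvement appears in the BDG estimate, where the integral $\int_0^s (s-r)^{-2\alpha - 1/2}\,dr$ becomes $\int_0^s (s-r)^{-2\alpha - 1/2} e^{-2\lambda(s-r)}\,dr \asymp \lambda^{2\alpha - 1/2}$ (finite since $\alpha < 1/4$), and similar gains of size $\lambda^{\eps/2-\alpha}$ appear at the outer convolution steps. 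Collecting these factors yields an estimate of the form
\begin{equation*}
\E\sup_{s \leq t}|W_U^\lambda(s)|_E^p \;\leq\; c_p\,\lambda^{p(\eps/2 - 1/4)}\,\E\sup_{s \leq t}|U(s)|_E^p,
\end{equation*}
in which the admissible range \eqref{cond-epsbeta} forces $\eps < 1/2$, so $\eps/2 - 1/4 < 0$ and the prefactor can be made arbitrarily small by taking $\lambda$ large.

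To close the bootstrap legitimately, I would first apply the argument to the stopped process $Z(\cdot \wedge \zeta_N)$ with $\zeta_N := \inf\{t : |Z(t)|_E \geq N\}$, so that the right-hand side $\E\sup_{s\leq t}|Z(s \wedge \zeta_N)|_E^p$ is a priori finite (bounded by $N^p$). On $[0,\zeta_N]$ the solution coincides with the original and the dissipativity bound for $G_\lambda$ holds, so all the constants in the estimates are independent of $N$. Performing the analogous analysis for $V$, and choosing $\lambda$ so that $c_p\lambda^{p(\eps/2 - 1/4)} \leq 1/2$, gives
\begin{equation*}
\E\sup_{s\leq t}|Z(s\wedge\zeta_N)|_E^p \;\leq\; c_p(1 + |z|_E^p) + \tfrac12\,\E\sup_{s\leq t}|Z(s\wedge\zeta_N)|_E^p,
\end{equation*}
which may be absorbed to yield $\E\sup_{s\leq t}|Z(s\wedge\zeta_N)|_E^p \leq 2c_p(1+|z|_E^p)$, uniformly in $N$ and $t$. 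Passing $N\to\infty$ using $\PP\{\zeta_N \to \infty\} = 1$ from Theorem \ref{thm-exi}, and then $t\to\infty$ by monotone convergence, delivers the stated bound.

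The main obstacle is the careful accounting in the weighted factorization: one must verify that the several $\lambda$-gains produced by carrying $e^{-\lambda(t-s)}$ through the H\"older, Sobolev, and BDG steps combine to a genuinely negative net exponent on $\lambda$, and that the contraction happens on the full $\sup$-in-$s$ moment (not merely on $\E|U(t)|_E^p$). The bookkeeping works precisely within the parameter range $1/p < \alpha < 1/4$, $1/p < \eps < 2(\alpha - 1/p)$ of \eqref{cond-epsbeta}, so no new restrictions are introduced, but without the assumption $\inf a_i > 0$ the constant $K_\lambda$ fails to exist and the whole scheme collapses.
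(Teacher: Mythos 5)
Your overall strategy (exponential tilt plus weighted factorization) is genuinely different from the paper's, which simply verifies Cerrai's ``decay outside a large ball'' condition \eqref{inv-eq-cd} for the modified nonlinearity $F_+(U,V)=F(\cdot\,,U\vee 0,V\vee 0)$ and invokes \cite[Proposition 6.1]{Cer03}, where the stochastic convolution is subtracted off and the resulting random PDE is handled by a subdifferential estimate in $E$. Your treatment of the drift is sound and encodes the same dissipativity: for non-negative $U,V$ one indeed has $G_\lambda(U,V)\leq (\lambda+m_1^*)^2/(4\underline{a}_1)$, and positivity preservation of $e^{t\Delta_N}$ makes the deterministic part of the tilted mild formulation uniformly bounded in $t$.

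The gap is in the key contraction step. What the weighted factorization actually yields is, for each \emph{fixed} $t$,
\begin{equation*}
\E\,|W^\lambda_U(t)|_E^p\;\leq\; c_p\,\lambda^{p(\eps/2-1/4)}\,\E\sup_{s\leq t}|U(s)|_E^p,
\end{equation*}
that is, a bound on $\sup_{t'\leq t}\E|W^\lambda_U(t')|_E^p$; it does not give $\E\sup_{t'\leq t}|W^\lambda_U(t')|_E^p$ with a $t$-independent constant. After the outer H\"older step the second factor is $\bigl(\int_0^{t'}e^{-\lambda p(t'-s)/2}|Y^\lambda_\alpha(s)|^p_{L^p}\,ds\bigr)^{1/p}$; its expectation is bounded uniformly in $t'$, but the expectation of its supremum over $t'\in[0,t]$ necessarily grows with $t$: for non-degenerate noise the running maximum of a stochastic convolution of this type diverges almost surely as $t\to\infty$ (compare the Ornstein--Uhlenbeck process, for which $\sup_{t\geq 0}|X(t)|=\infty$ a.s.\ even though $\sup_{t}\E|X(t)|^p<\infty$). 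Consequently the absorption inequality cannot hold uniformly in $N$ and $t$, and the final monotone-convergence passage to $\E\sup_{t\geq 0}$ fails --- indeed $\E\sup_{t\geq 0}|Z(t)|_E^p$ is expected to be infinite for this ergodic-type dynamics, so no repair can reach the literal target. Your scheme does prove the statement that the subsequent Krylov--Bogoliubov/tightness argument actually uses, namely $\sup_{t\geq 0}\E|Z^z(t)|_E^p\leq c_p(1+|z|_E^p)$ (which is also what \cite[Proposition 6.1]{Cer03} provides), provided you additionally replace $\E\sup_{r\leq s}|U(r)|_E^p$ on the right of the Burkholder--Davis--Gundy step by $\sup_{r\leq s}\E|U(r)|_E^p$ --- possible by applying H\"older's inequality with respect to the finite measure $(s-r)^{-2\alpha-1/2}e^{-2\lambda(s-r)}dr$ before taking expectations --- and then run the bootstrap and the stopping-time localization on $\phi(t):=\sup_{s\leq t}\E|Z(s\wedge\zeta_N)|_E^p$ rather than on $\E\sup_{s\leq t}$.
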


\begin{proof}
This Proposition is proved by applying \cite[Proposition 6.1]{Cer03}.
The validity of \cite[Proposition 6.1]{Cer03} for our system in this Proposition is shown as follows.

As mentioned earlier, the condition on growth rates of coefficients used in \cite{Cer03} is not satisfied in our setting.
However,
we have already established some ``nice" properties for the solution.
Therefore, these conditions are not needed in this section since we still guarantee necessary properties used in \cite[Proof of Proposition 6.1]{Cer03}.

There is one condition that we need to verify, which is essentially, a
``decaying outside a large ball" condition for the reaction term \cite[Condition (5.17) or (5.19)]{Cer03}, namely
\begin{equation}\label{inv-eq-cd}
\langle F(z+h)-F(z),\delta_h\rangle_{E}\leq -a|h|_E^2+b(1+|z|_E^2),\;\forall z,h\in E,
\end{equation}
for some constants $a,b>0$.
In the above, $\delta_h$ and $\langle \cdot,\delta_h\rangle_E$ are defined as follows.
For any $\delta\in E^*$ having norm 1, $\delta_h\in E^*$ defined for any $z\in E$ by
$$
\langle z,\delta_h\rangle_E:=
\begin{cases}
\frac 1{|h|_E}\sum_{i=1}^2 z_i(\xi_i)h_i(\xi_i),\text{ if }h\neq 0\\
\langle \delta,z\rangle_{E^*}\text{ if }h=0,
\end{cases}
$$
where $\xi_i\in [0,1], i=1,2$ is such that $|h_i(\xi_i)|=\max_{x\in[0,1]}|h_i(x)|$.

The original coefficient $F=(F_1,F_2)$ of \eqref{eq-main} does not satisfy this condition.
However, we make use of the non-negativity of solutions by considering
$$
F_+(U,V)(x):=F(x,U(x)\vee 0,V(x)\vee 0).
$$
and let $U_+(t)$, $V_+(t)$ be the solution to \eqref{eq-main} when $F$ is replaced by $F_+$ with the same initial condition.
Thanks to \eqref{inv-eq-1}, it is easy to verify that $F_+$ satisfies the condition \eqref{inv-eq-cd}.
Therefore, the conclusion holds for $Z_+=(U_+,V_+)$.

Finally, since the initial data $z=(U_0,V_0)$ satisfy $U_0,V_0\geq 0$, by the non-negativity of the solution in Section \ref{sec:wel}, we have $U_+(t),V_+(t)\geq 0$ for all $t\geq 0$. As a consequence, $U_+(t)=U(t)$, $V_+=V(t), \forall t\geq 0$.
Therefore, the proof is complete.
\end{proof}

\begin{prop}
Assume that $\inf_{x\in [0,1]}a_1(x)>0$ and $\inf_{x\in[0,1]}a_2(x)>0$.
Let $z=(U_0,V_0)\in E$, $U_0,V_0\geq 0$.
There are $\theta>0$ and $t_0>0$ such that
$$
\sup_{t\geq t_0}\E |Z^z(t)|_{C^\theta([0,1],\R^2)}<\infty.
$$
\end{prop}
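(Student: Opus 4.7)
The strategy is to exploit the smoothing action of the Neumann heat semigroup on a sliding time window of fixed length, so as to promote the uniform $E$-moment bound of Proposition~\ref{inv-prop2} into a uniform H\"older-moment bound. Fix any $t_0>0$ and let $t\geq t_0$. Using the mild formulation \eqref{eq-mild-infinite} together with the semigroup property, I would write
\begin{equation*}
Z(t)=e^{t_0\Delta_N}Z(t-t_0)+\int_{t-t_0}^{t}e^{(t-s)\Delta_N}F(Z(s))\,ds+\widetilde{\gamma}_{t_0}(Z)(t),
\end{equation*}
where $\widetilde{\gamma}_{t_0}(Z)(t)$ denotes the stochastic convolution over the window $[t-t_0,t]$ (constructed analogously to \eqref{eq-gamma}). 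I then estimate the three pieces separately in a Sobolev--Slobodeckij norm $W^{\eps,p}((0,1),\R^2)$ with $\eps>1/p$, and invoke the embedding $W^{\eps,p}\hookrightarrow C^{\theta}([0,1],\R^2)$ for $\theta<\eps-1/p$ to conclude.

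For the semigroup part, \eqref{prop-S1} gives $|e^{t_0\Delta_N}Z(t-t_0)|_{\eps,p}\leq c(t_0\wedge 1)^{-\eps/2}|Z(t-t_0)|_{L^p((0,1),\R^2)}$; since $L^p((0,1),\R^2)$ is continuously embedded in $E$, Proposition~\ref{inv-prop2} yields $\E|Z(t-t_0)|_{L^p}^p\leq c_p(1+|z|_E^p)$, a bound independent of $t\geq t_0$. For the drift term I apply \eqref{prop-S1} inside the integral and H\"older's inequality (as in the proof of Proposition~\ref{s3-prop-1}) to control
\begin{equation*}
\Big|\int_{t-t_0}^t e^{(t-s)\Delta_N}F(Z(s))\,ds\Big|_{\eps,p}\leq c\int_{t-t_0}^t((t-s)\wedge 1)^{-\eps/2}|F(Z(s))|_{L^p}\,ds,
\end{equation*}
and use the quadratic growth $|F(z)|_E\leq c(1+|z|_E^2)$ together with the uniform moment bound $\sup_{s\ge 0}\E|Z(s)|_E^{2p}\leq c_p(1+|z|_E^{2p})$ from Proposition~\ref{inv-prop2}. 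Choosing $\eps<2$ makes the kernel integrable over the window of length $t_0$, and the resulting bound is independent of $t$.

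For the stochastic convolution $\widetilde{\gamma}_{t_0}(Z)(t)$, I would follow the factorization argument of Proposition~\ref{s3-prop-1}: choose $\alpha,\eps$ satisfying \eqref{cond-epsbeta} and rewrite the convolution as an integral in $s$ of $(t-s)^{\alpha-1}e^{(t-s)\Delta_N}Y_\alpha(Z)(s)$ with $Y_\alpha(Z)(s)$ the auxiliary Itô integral on $[t-t_0,s]$. Burkholder--Davis--Gundy combined with \eqref{prop-A1} and Parseval (exactly as in \eqref{s3-prop1-eq1}--\eqref{s3-prop1-eq4}) controls $\E|Y_\alpha(Z)(s,x)|^p$ by $c\,t_0^{p(\tfrac14-\tfrac\alpha2)}\sup_{r\in[t-t_0,s]}\E|Z(r)|_E^p$, and the latter is uniformly bounded in $t$ by Proposition~\ref{inv-prop2}. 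Plugging back and using \eqref{prop-S1} gives a bound on $\E|\widetilde{\gamma}_{t_0}(Z)(t)|_{\eps,p}^p$ that depends only on $t_0$, $p$, $\eps$, $\alpha$ and $|z|_E$, uniformly for $t\geq t_0$.

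Adding the three estimates and choosing $p$ large and $\eps\in(1/p,2(\alpha-1/p))$, I obtain $\sup_{t\geq t_0}\E|Z^z(t)|_{\eps,p}^p<\infty$ and hence, via the Sobolev embedding, $\sup_{t\geq t_0}\E|Z^z(t)|_{C^\theta([0,1],\R^2)}<\infty$ for every $\theta<\eps-1/p$. The main obstacle I anticipate is bookkeeping: the stochastic convolution $\gamma(Z)$ on the original window $[0,t]$ is not a martingale in $t$, so I must be careful to set up the shifted window $[t-t_0,t]$ and verify that the uniform-in-$s$ bound $\sup_{s\geq 0}\E|Z(s)|_E^{2p}<\infty$ transfers through both the factorization identity and BDG without hidden $t$-dependence. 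Given the previous proposition this is only a matter of tracking constants.
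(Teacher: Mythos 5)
Your proposal is correct and takes essentially the same route as the paper, which simply defers to Cerrai's Theorem 6.2: restart the mild equation on a sliding window $[t-t_0,t]$, promote the uniform $E$-moment bound of Proposition \ref{inv-prop2} to a uniform $W^{\eps,p}$-bound via the semigroup smoothing \eqref{prop-S1} and the factorization estimate of Proposition \ref{s3-prop-1}, and conclude with the embedding $W^{\eps,p}\hookrightarrow C^\theta$ for $\theta<\eps-1/p$. One wording slip: what you need is $E=C([0,1],\R^2)\hookrightarrow L^p((0,1),\R^2)$ (not the reverse) to dominate $|Z(t-t_0)|_{L^p}$ by $|Z(t-t_0)|_E$, which is of course what your displayed inequality actually uses.
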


\begin{proof}
Once we have Proposition \ref{inv-prop2}, the proof is similar to \cite[Proof of Theorem 6.2]{Cer03}.
 Because Proposition \ref{inv-prop2} has already established the uniformly bounded in space of continuous function $E$ of the solution,
the remaining task is to
obtain this property in $C^\theta$, the space of H\"older continuous function, with some sufficiently small $\theta$.
In fact, we can take care of the stochastic integral by using the Proposition \ref{s3-prop-1} and the fact $W^{\eps,p}$ is embedded into $C^\theta$, for some $\theta<\eps-1/p$.
The convolution of initial condition and the drift term can be handled by using Proposition \ref{inv-prop2} and \cite[property (2.6) and Theorem 2.6]{Cer03}.
\end{proof}

We state the results we have just proved to close this section.

\begin{thm}
Assume that $\inf_{x\in [0,1]}a_1(x)>0$ and $\inf_{x\in[0,1]}a_2(x)>0$.
The transition semigroup $P_t$ associated to $Z^z(t)$ admits an invariant measure in $E_+$.
\end{thm}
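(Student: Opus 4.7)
The plan is to invoke the Krylov–Bogoliubov theorem, for which the two essential ingredients have already been assembled in the preceding discussion: the Feller property of $P_t$ and the uniform-in-time bound
\[
\sup_{t\geq t_0}\E|Z^z(t)|_{C^\theta([0,1],\R^2)}<\infty
\]
obtained from the previous proposition under the standing assumption $\inf a_i>0$. So the only remaining work is to convert this moment estimate into tightness of a suitable family of measures on $E_+$ and then to pass to the limit using the Feller property.

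First I would fix an initial datum $z\in E_+$ and introduce the Cesàro averages
\[
\mu_T(A)\;:=\;\frac{1}{T-t_0}\int_{t_0}^{T} P_t(z,A)\,dt,\qquad A\in\mathcal{B}(E_+),
\]
for $T>t_0$. Using that the closed ball
\[
K_R:=\bigl\{w\in E_+:\,|w|_{C^\theta([0,1],\R^2)}\le R\bigr\}
\]
is compactly embedded in $E_+$ (Arzelà–Ascoli, since $C^\theta\hookrightarrow E$ is compact and $E_+$ is closed in $E$), Markov's inequality and Fubini give
\[
\mu_T(E_+\setminus K_R)\le \frac{1}{R}\sup_{t\ge t_0}\E|Z^z(t)|_{C^\theta([0,1],\R^2)}.
\]
The right-hand side is uniform in $T$ and tends to $0$ as $R\to\infty$, so $\{\mu_T\}_{T>t_0}$ is tight on $E_+$. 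By Prokhorov's theorem there is a subsequence $T_n\to\infty$ and a probability measure $\mu$ on $(E_+,\mathcal{B}(E_+))$ such that $\mu_{T_n}\Rightarrow\mu$.

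It remains to verify that $\mu$ is invariant for $P_t$. For any $\varphi\in C_b(E_+)$ and any fixed $s\ge 0$, the Feller property of $P_t$ already established implies $P_s\varphi\in C_b(E_+)$; combining this with the semigroup property $P_t(P_s\varphi)=P_{t+s}\varphi$ yields
\[
\int_{E_+}P_s\varphi\,d\mu_{T_n}
=\frac{1}{T_n-t_0}\int_{t_0}^{T_n}P_{t+s}\varphi(z)\,dt
=\int_{E_+}\varphi\,d\mu_{T_n}
+\frac{1}{T_n-t_0}\Bigl(\int_{T_n}^{T_n+s}-\int_{t_0}^{t_0+s}\Bigr)P_t\varphi(z)\,dt.
\]
The two boundary terms are bounded by $2s\|\varphi\|_0/(T_n-t_0)$, hence vanish as $n\to\infty$, while weak convergence and $P_s\varphi,\varphi\in C_b(E_+)$ let us pass to the limit on both sides to obtain $\int P_s\varphi\,d\mu=\int\varphi\,d\mu$. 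Since $s\ge 0$ and $\varphi\in C_b(E_+)$ are arbitrary, $\mu$ is an invariant probability measure supported on $E_+$, completing the proof.

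No step here is technically hard once the previous proposition is in hand; the only subtlety worth double-checking is that $K_R\subset E_+$ is actually compact in the topology of $E$ (not merely relatively compact in $C^\theta$), but this is immediate because $E_+$ is closed in $E$ and the embedding $C^\theta\hookrightarrow E$ is compact, so bounded sets in $C^\theta$ intersected with $E_+$ are compact in $E$. The main conceptual obstacle was already overcome earlier in establishing the uniform $C^\theta$-bound under the non-degeneracy assumption on $a_i$; this final step is a clean application of the classical Krylov–Bogoliubov machinery.
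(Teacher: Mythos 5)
Your proposal is correct and follows exactly the route the paper takes: the paper derives tightness of the laws $P_t(z,\cdot)$ from the uniform $C^\theta$-moment bound (established in the preceding proposition under $\inf a_i>0$) and then invokes the Krylov--Bogoliubov theorem together with the Feller property, which is precisely the argument you have written out in full (Ces\`aro averages, Prokhorov, passage to the limit). The only difference is that the paper cites Krylov--Bogoliubov as a black box while you unpack it; the content is the same.
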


\para{Remark on uniqueness of invariant measure.}
In contrast to the existence of invariant measure, the uniqueness is more subtle.
Compared with stochastic differential equations (SDEs), the strong Feller property of the solutions of SPDEs is not easy to obtain, and
without strong Feller property
 Doob’s method
 cannot be used
to prove uniqueness of the invariant measure.
Much effort has been devoted to proving the uniqueness of invariant measure for the solutions of SPDEs in the literature.
In \cite{PZ95}, Peszat and Zabczyk proved that if the coefficients are Lipschitz continuous and the diffusion term is non-degenerate, the transition semigroup is strong Feller and irreducible, and then, the uniqueness of invariant measure is ensured.
Moreover, the class of SPDEs with non-Lipschitz and bounded drift but additive noise (constant diffusion) was
investigated in \cite{CG95}.
The reader can gain more insights by consulting the book \cite{PZ96}.
To the best of our knowledge, with non-Lipschitz coefficients and multiplicative noise, proving
the strong Feller property and/or
the uniqueness of invariant measure
still remains to be an open question.

\section{Coexistence and extinction}\label{sec:lon}

One of the most important questions studied widely in mathematical biology is whether a species under consideration is extinct or not.
Sufficient conditions for coexistence and extinction of the species in stochastic population in general and  competitive system in particular is
interesting and attractive to biologists.

This section presents some ideas and methods for this problem in our setting as well as the first attempt in providing sufficient condition for
extinction for stochastic Lotka-Volterra competitive reaction-diffusion system perturbed by space-time white noise.
The study of longtime properties of deterministic and/or stochastic populations in more simple frameworks has a long history.
An overview of that and the difficulties in our own system are discussed carefully in the Section \ref{sec:lon:dis}.

\subsection{Mild stochastic calculus}

One of the main difficulties in studying longtime properties of the system in our setting is the lack of machinery to handle the change of variables. For ODEs and PDEs, the usual calculus tools can be used. In
stochastic differential equations (SDEs)
and stochastic functional differential equations (SFDEs),  It\^o  rule and/or functional It\^o rules enable us to change the variable
relatively easily.
However,
the classical It\^o formula is no longer valid for mild solutions of SPDEs.

In this section, we  recall briefly the mild stochastic calculus and the mild It\^o formula developed recently by Da Prato, Jentzen, and R\"ocker in \cite{PJR19}; see also
 the construction and results in the paper.

Let $\check H\subseteq H\subseteq \hat H$ and $U$ be real Hilbert spaces, $W$ be cylindrical $Q$-Wiener process on $\{\Omega,\F,\{\F_t\},\PP\}$ with covariance operator $Q$ and $U_0:=Q^{1/2}(U)$, and $HS(U_0,\hat H)$ be the space of Hilbert-Schmidt operator from $U_0$ to $\hat H$.

\begin{deff}{\rm
	We say that $X$ is a mild It\^o process on $\{\Omega,\F,\{\F_t\},\PP,W,\check H,H,\hat H\}$ with evolution family $S$, mild drift $F$ and mild diffusion $G$ if and only if the followings hold
	\begin{itemize}
		\item [\rm{(i)}] $X:[0,\infty)\times \Omega \to H$ is an $\F_t$-predictable stochastic process,
		\item [\rm{(ii)}] $F: [0,\infty]\times\Omega\to \hat H$ is an $\F_t$-predictable stochastic process,
		\item [\rm{(iii)}] $G: [0,\infty)\times\Omega \to HS(U_0,\hat H)$ is an $\F_t$-predictable stochastic process,
		\item [\rm{(iv)}] $S: \{(t_1,t_2): 0\leq t_1<t_2\}\to L(\hat H,\check H)$ is a measurable function (see \cite[Section 2.1]{PJR19} for detailed construction of the $\sigma$-algebra on $L(\hat H,\check H)$) satisfying that for all $t_1<t_2<t_3$, $S_{t_2,t_3}S_{t_1,t_2}=S_{t_1,t_3}$,
		\item [\rm{(v)}] for all $t>0$, it holds \a.s that
		$$
		\int_0^t \|S_{s,t}F_s\|_{\check H}+\|S_{s,t}G_s\|^2_{HS(U_0,\check H)}ds<\infty,
		$$
		and
		$$
		X_t=S_{0,t}X_0+\int_0^t S_{s,t}F_sds+\int_0^t S_{s,t}G_sdW_s.
		$$
	\end{itemize} }
\end{deff}

\begin{thm}\label{s7-thm-mild-ito} {\rm(\cite[Theorem 1, Section 2]{PJR19})} {\rm(The mild It\^o formula).}
	Let $X:[0,\infty)\times\Omega\to H$ be a mild It\^o formula with evolution family $S:\{(t_1,t_2): 0\leq t_1<t_2\}\to L(\hat H,\check H)$, mild drift $F: [0,\infty]\times\Omega\to \hat H$ and mild diffusion $G: [0,\infty)\times\Omega \to HS(U_0,\hat H)$.
	Let $V$ be a real separable Hilbert space and $\mathbb U\subset U_0$ be an arbitrary orthonormal basis of $U_0$.
	Then, for all $\varphi\in C^{1,2}([0,\infty)\times \check H,V)$, $t_0<t\in [0,\infty)$, it holds \a.s that
	$$
	\int_0^t \left\|\frac{\partial\varphi}{\partial X}(s,S_{s,t}X_s)S_{s,t}F_s\right\|_V+\left\| \frac{\partial \varphi}{\partial X}(s,S_{s,t}X_s)S_{s,t}G_s\right\|^2_{HS(U_0,V)}ds<\infty,
	$$
	and
	$$
	\int_0^t \left\| \frac{\partial\varphi}{\partial t}(s,S_{s,t}X_s)\right\|_V+\left\|\frac{\partial^2\varphi}{\partial X^2}(s,S_{s,t}X_s)\right\|\left\|S_{s,t}G_s\right\|^2_{HS(U_0,\check H)}ds<\infty,
	$$
	and
	$$
	\begin{aligned}
	\varphi(t,X_t)=&\varphi(t_0,S_{t_0,t}X_{t_0})+\int_{t_0}^t \frac{\partial\varphi}{\partial t}(s,S_{s,t}X_s)ds+\int_{t_0}^t \frac{\partial\varphi}{\partial X}(s,S_{s,t}X_s)S_{s,t}F_sds\\
	&+\frac 12\sum_{u\in\mathbb U}\int_{t_0}^t\frac{\partial^2\varphi}{\partial X^2}(s,S_{s,t}X_s)(S_{s,t}G_su,S_{s,t}G_su)ds\\
	&+\int_{t_0}^t \frac{\partial \varphi}{\partial X}(s,S_{s,t}X_s)S_{s,t}Z_sdW_s.
	\end{aligned}
	$$
\end{thm}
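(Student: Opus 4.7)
The plan is to reduce the mild It\^o formula to the classical Hilbert-space It\^o formula through a reparametrization that exploits the two-parameter semigroup property. Fix an endpoint $t\in(t_0,\infty)$ and, for $s\in[t_0,t]$, introduce the auxiliary process
$$
Y^{t}_s := S_{s,t} X_s .
$$
Using the mild It\^o representation of $X_s$ together with the composition identity $S_{s,t}\circ S_{r,s}=S_{r,t}$ for $r<s<t$, one pulls the deterministic linear operator $S_{s,t}$ inside the Bochner and stochastic integrals to obtain
$$
Y^{t}_s = S_{t_0,t} X_{t_0} + \int_{t_0}^{s} S_{r,t} F_r\,dr + \int_{t_0}^{s} S_{r,t} G_r\,dW_r .
$$
With the endpoint $t$ held fixed, this displays $Y^{t}$ as a genuine It\^o process in $\check H$ driven by $W$, with drift $r\mapsto S_{r,t}F_r$ and diffusion $r\mapsto S_{r,t}G_r$; moreover $Y^{t}_{t_0}=S_{t_0,t}X_{t_0}$ and $Y^{t}_t=X_t$.

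The second step is to apply the classical It\^o formula in Hilbert space (e.g.\ Da Prato--Zabczyk \cite{PZ92}) for $\varphi\in C^{1,2}([0,\infty)\times\check H,V)$ to the process $s\mapsto \varphi(s,Y^{t}_s)$ on $[t_0,t]$. Substituting the drift and diffusion of $Y^{t}$ and rewriting the quadratic-variation term through the orthonormal basis $\mathbb U$ of $U_0$ as $\sum_{u\in\mathbb U}\tfrac{\partial^2\varphi}{\partial X^2}(s,Y^{t}_s)(S_{s,t}G_s u,S_{s,t}G_s u)$ reproduces the claimed identity term by term. The a.s.\ finiteness of the three integrals in the conclusion follows from hypothesis (v) together with a localization argument: the derivatives of $\varphi$ are bounded on bounded sets of $\check H$, and $s\mapsto Y^{t}_s$ admits an a.s.\ continuous modification on $[t_0,t]$ so that its range is bounded.

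The main obstacle, and the reason this does not reduce to applying the classical It\^o rule directly to $X_t$, is the first step: justifying that $S_{s,t}$ can be passed through the stochastic integral and that the resulting process actually takes values in the smaller space $\check H$, even though $F$ and $G$ only take values in $\hat H$. The standard route is to approximate $G$ by simple processes, apply the deterministic identity on each simple step, and pass to the limit via a stochastic-Fubini--type identity of the form $S_{s,t}\int_{t_0}^{s} S_{r,s}G_r\,dW_r=\int_{t_0}^{s}S_{r,t}G_r\,dW_r$. The potential singularity of $S_{r,t}$ as $r\uparrow t$ is precisely controlled by the integrability assumption $\int_0^t\|S_{s,t}G_s\|^2_{HS(U_0,\check H)}\,ds<\infty$; a parallel argument handles $F$ using $\int_0^t\|S_{s,t}F_s\|_{\check H}\,ds<\infty$. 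Once $Y^{t}$ is identified as a $\check H$-valued continuous It\^o process, the classical machinery applies without further modification and the result follows.
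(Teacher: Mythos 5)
The paper does not prove this theorem; it is quoted verbatim from \cite[Theorem 1, Section 2]{PJR19}, so there is no internal proof to compare against. Your argument — fix the endpoint $t$, reparametrize via $Y^t_s=S_{s,t}X_s$, use the composition law and the boundedness of $S_{s,t}\in L(\hat H,\check H)$ to pull the operator through the Bochner and stochastic integrals, and then apply the classical Hilbert-space It\^o formula to $s\mapsto\varphi(s,Y^t_s)$ — is exactly the proof strategy of \cite{PJR19}, and it is sound. The only point worth tightening is the identification $Y^t_t=X_t$: since $S$ is defined only for $t_1<t_2$, the process $Y^t$ should be taken at $s=t$ via its integral representation $S_{t_0,t}X_{t_0}+\int_{t_0}^tS_{r,t}F_r\,dr+\int_{t_0}^tS_{r,t}G_r\,dW_r$, which equals $X_t$ by the very definition of a mild It\^o process, rather than by invoking $S_{t,t}=\mathrm{Id}$.
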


\subsection{A first result}
In this section, we provide a first result on sufficient condition for the extinction (and equivalently, of course, necessary conditions for permanence) of the Lotka-Volterra competitive model in SPDEs setting.

\begin{thm}\label{lon-ext-thm1}
	Assume that $\sup_{x\in [0,1]}m_1(x)<\frac 12\inf_{x\in[0,1]}\sigma_1^2(x)$.
	For any initial $(U_0,V_0)\in E$, $U_0,V_0\geq 0$, one has that
	$$
	\limsup_{t\to\infty}\E \ln\int_0^1 U(t,x)dx=-\infty.
	$$
	Similarly, if $\sup_{x\in [0,1]}m_2(x)<\frac 12\inf_{x\in[0,1]}\sigma_2^2(x)$ then
	$$
	\limsup_{t\to\infty}\E \ln\int_0^1 V(t,x)dx=-\infty.
	$$
\end{thm}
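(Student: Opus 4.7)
The key reduction is to pass from the infinite-dimensional mild solution $U(t,\cdot)$ to the scalar process $\bar U(t) := \int_0^1 U(t,x)\,dx$, exploiting the mass-preservation property of the Neumann heat semigroup: $\int_0^1 (e^{\tau\Delta_N}f)(x)\,dx = \int_0^1 f(x)\,dx$ for every $\tau\geq 0$ and every $f$ (a consequence of $\int_0^1 G_\tau(x,y)\,dx = 1$). I would apply the mild It\^o formula, Theorem \ref{s7-thm-mild-ito}, with the linear functional $\varphi(u) = \int_0^1 u(x)\,dx$: the Hessian term vanishes because $\varphi$ is linear, and the evolution factors $S_{s,t}=e^{(t-s)\Delta_N}$ drop out by mass preservation, yielding
$$
\bar U(t) = \bar U_0 + \int_0^t\int_0^1 U(s,y)\bigl(m_1(y)-a_1(y)U(s,y)-b_1(y)V(s,y)\bigr)\,dy\,ds + M(t),
$$
where $M$ is a continuous local martingale with $\langle M\rangle_t = \int_0^t\int_0^1 \sigma_1^2(y)U^2(s,y)\,dy\,ds$. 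The same identity can alternatively be obtained by integrating the random-field mild form \eqref{eq-mild-random} in $x$ and invoking a stochastic Fubini argument for the Walsh integral.

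By Proposition \ref{s3-prop-pos}, if $U_0\not\equiv 0$ (else the claim is trivial) then $U(t,x)>0$ for all $t>0$ and $x\in(0,1)$ a.s., so $\bar U(t)>0$. Apply the classical one-dimensional It\^o formula to $\ln\bar U(t)$ to obtain $\ln\bar U(t)=\ln\bar U_0 + A(t) + N(t)$, where $N(t):=\int_0^t \bar U(s)^{-1}\,dM(s)$ is a continuous local martingale and
$$
A(t) = \int_0^t \frac{\int_0^1 U(m_1-a_1 U-b_1 V)\,dy}{\bar U(s)}\,ds - \frac12 \int_0^t \frac{\int_0^1 \sigma_1^2 U^2\,dy}{\bar U(s)^2}\,ds.
$$
The non-negativity of $a_1, b_1, V$ gives the drift bound $\bar U(s)^{-1}\int_0^1 U\,m_1\,dy \leq \sup_y m_1(y)$, while the Cauchy--Schwarz inequality $\bigl(\int_0^1 U\,dy\bigr)^2\leq \int_0^1 U^2\,dy$ yields $\bar U(s)^{-2}\int_0^1 \sigma_1^2 U^2\,dy \geq \inf_y \sigma_1^2(y)$. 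Consequently $A(t)\leq \lambda t$ with $\lambda := \sup_y m_1(y) - \tfrac12 \inf_y \sigma_1^2(y) < 0$ by hypothesis.

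The principal technical step is eliminating the local martingale $N$ upon taking expectations. I would localize via $\tau_n := \inf\{t : \bar U(t)\notin(1/n,n)\}$, on which $\bar U^{-1}$ is bounded by $n$ and $N(\cdot\wedge\tau_n)$ is a true martingale, producing $\E\ln\bar U(t\wedge\tau_n)\leq \ln\bar U_0 + \lambda\,\E(t\wedge\tau_n)$. Passing $n\to\infty$ requires $\tau_n\uparrow\infty$ a.s., i.e., that $\bar U$ neither explodes nor reaches $0$ in finite time. Non-explosion follows immediately from the $L^p$ moment bounds of Section \ref{sec:wel}; non-attainability of $0$ is the main obstacle. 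I expect to recover it from the geometric-Brownian-motion-like linear degeneracy of both drift and diffusion of $\bar U$ near the origin, either via a Feller-type test on a suitable comparison scalar SDE or by a direct supermartingale argument applied to $\bar U(t)^{-\theta}$ for small $\theta>0$. With that in hand, monotone convergence gives $\E(t\wedge\tau_n)\uparrow t$, and a Fatou argument on the positive part of $\ln\bar U(t\wedge\tau_n)\to\ln\bar U(t)$ delivers $\E\ln\bar U(t)\leq \ln\bar U_0 + \lambda t \to -\infty$. The conclusion for $V$ follows verbatim after interchanging indices $1\leftrightarrow 2$.
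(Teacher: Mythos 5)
Your argument is correct in substance but follows a genuinely different route from the paper's. The paper applies the mild It\^o formula (Theorem \ref{s7-thm-mild-ito}) directly to the regularized nonlinear functional $\varphi_\eta(v)=\ln\big(\eta+\int_0^1 v\big)$ with $\eta>0$; the semigroup factors $e^{(t-s)\Delta_N}$ are carried along throughout, the quadratic-variation term is bounded below by a Parseval/Cauchy--Schwarz computation giving $\lim_{\eta\to0}M_\eta\geq 1$, the drift is bounded by $\sup m_1$ using positivity, expectations are taken (killing the stochastic term for each fixed $t$), and finally $\eta\downarrow 0$. You instead apply the mild It\^o formula only to the \emph{linear} functional $u\mapsto\int_0^1 u$, use mass conservation $\int_0^1 G_\tau(x,y)\,dx=1$ to strip off every semigroup factor, and thereby exhibit $\bar U(t)=\int_0^1U(t,x)\,dx$ as an honest scalar It\^o process with a true local-martingale noise term; classical one-dimensional It\^o calculus then applies to $\ln\bar U$. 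The two key inequalities (drift $\leq\sup m_1$ from non-negativity, and $\big(\int_0^1U\big)^2\leq\int_0^1U^2$ from Cauchy--Schwarz, giving the $-\tfrac12\inf\sigma_1^2$ term) are identical in both proofs. What your reduction buys is that the noise term becomes a genuine martingale, sidestepping the paper's own complaint (in the remark following the theorem) that stochastic convolutions are not martingales; the price is that you must justify taking $\ln$ of $\bar U$, which the paper's $\eta$-regularization avoids entirely.

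On that price: you flag non-attainability of $0$ by $\bar U$ as ``the main obstacle'' and propose to derive it from a Feller test or a supermartingale bound on $\bar U^{-\theta}$. That machinery is unnecessary here: Proposition \ref{s3-prop-pos} already gives $U(t,x)>0$ for all $t>0$ a.s.\ whenever $U_0\not\equiv 0$, so $\bar U$ is a.s.\ continuous and strictly positive on every compact time interval, whence $\tau_n\uparrow\infty$ a.s.\ without further work. With that substitution your localization, the dominated-convergence bound $\ln^+\bar U(t\wedge\tau_n)\leq\sup_{s\leq t}\bar U(s)\in L^1$ (from the moment bounds of Section \ref{sec:wel}), and the Fatou argument on the negative part close the proof; note also that $\ln\bar U(t\wedge\tau_n)$ is bounded on $\{\bar U\in[1/n,n]\}$, so all expectations in the stopped identity are finite and the martingale term drops out cleanly. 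The derivation of the scalar equation for $\bar U$ itself is sound and, as you observe, can equally be obtained by integrating \eqref{eq-mild-random} in $x$ with a stochastic Fubini argument.
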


\begin{proof}
	For arbitrarily fixed $\eta>0$,
	directed calculations show that at $v\in L^2((0,1),\R)$ satisfying $v\geq 0$, the first and second Fr\'echet derivative of the functional $\varphi_\eta(v):=\ln \left(\int_0^1 v(y)dy+\eta\right)$, denoted by $\frac{\partial \varphi_\eta}{\partial X}(v)$ and
	$\frac{\partial^2\varphi_\eta}{\partial X^2}(v)$, are  as follows
	$$
	\frac{\partial \varphi_\eta}{\partial X}(v)h=\frac{\int_0^1 h(y)dy}{\eta+\int_0^1 v(y)dy},\;h\in L^2((0,1),\R),
	$$
	and
	$$
	\frac{\partial^2\varphi_\eta}{\partial X^2}(v)(h_1,h_2)=-\frac{\int_0^1 h_1(y)dy\int_0^1h_2(y)dy}{\left(\eta+\int_0^1 v(y)dy\right)^2},\;h_1,h_2\in L^2((0,1),\R).
	$$
	By the mild It\^o formula (see  Theorem \ref{s7-thm-mild-ito}), we have that
	\begin{equation}\label{ext-eq-1}
	\begin{aligned}
	\ln&\Big(\eta+ \int_0^1 U(t,x)dx\Big)\\
	=&\ln \Big(\eta+\int_0^1 \left(e^{t\Delta_N} U_0\right)(x)dx\Big)\\
	&+\int_0^t\frac{\int_0^1 \big(e^{(t-s)\Delta_N}U(s)(m_1-a_1U(s)-b_1V(s))\big)(x)dx}{\eta+\int_0^1 \left(e^{(t-s)\Delta_N}U(s)\right)(x)dx}ds\\
	&-\frac 12\int_0^t \sum_{k=1}^\infty \frac{\left(\int_0^1 \big(e^{(t-s)\Delta_N}U(s)e_k\sigma_1\big)(x)dx\right)^2}{\left(\eta+\int_0^1 e^{(t-s)\Delta_N}U(s)(x)dx\right)^2}+\int_0^t J_\eta(t,s)dW_1(s),
	\end{aligned}
	\end{equation}
	where $J_\eta(t,s)$ is linear operator from $L^2((0,1),\R)$ to $\R$ defined by
	$$
	J_\eta(t,s)(h):=\frac{\int_0^1 \big(e^{(t-s)\Delta_N}U(s)h\sigma_1\big)(x)dx}{\eta+\int_0^1 \big(e^{(t-s)\Delta_N}U(s)\big)(x)dx},\;h\in L^2((0,1),\R).
	$$
	Set
	\begin{equation}\label{ext-eq-2}
	M_\eta(U,t,s)=\sum_{k=1}^\infty \frac{\Big(\int_0^1 \big(e^{(t-s)\Delta_N}U(s)e_k\big)(x)dx\Big)^2}{\Big(\eta+\int_0^1 e^{(t-s)\Delta_N}U(s)(x)dx\Big)^2}.
	\end{equation}
	Parseval's identity and H\"older's inequality show that
	\begin{equation}\label{ext-eq-3}
	\begin{aligned}
	\sum_{k=1}^\infty& \Big(\int_0^1 \big(e^{(t-s)\Delta_N}U(s)e_k\big)(x)dx\Big)^2\\
	&=\sum_{k=1}^\infty \Big(\int_0^1\int_0^1 G_{t-s}(x,y)U(s,y)e_k(y)dydx\Big)^2\\
	&=\sum_{k=1}^\infty
	\Big\langle \int_0^1 G_{t-s}(x,\cdot)U(s,\cdot)dx,e_k(\cdot)\Big\rangle_{L^2((0,1),\R)}^2\\
	&=\int_0^1 \Big(\int_0^1 G_{t-s}(x,y)U(s,y)dx\Big)^2dy\\
	&\geq  \Big(\int_0^1\int_0^1 G_{t-s}(x,y)U(s,y)dxdy\Big)^2.
	\end{aligned}
	\end{equation}
	We deduce from \eqref{ext-eq-2} and \eqref{ext-eq-3}  that
	\begin{equation}\label{ext-eq-4}
	\lim_{\eta\to0}M_\eta(U,t,s)\geq 1.
	\end{equation}
	It is seen that
	\begin{equation}\label{ext-eq-5}
	\frac{\disp\int_0^1 \big(e^{(t-s)\Delta_N}U(s)(m_1-a_1U(s)-b_1V(s))\big)(x)dx}{\disp\eta+\int_0^1 \left(e^{(t-s)\Delta_N}U(s)\right)(x)dx}\leq \sup_{x\in [0,1]}m_1(x).
	\end{equation}
	Taking expectation to \eqref{ext-eq-1} and then applying \eqref{ext-eq-5} imply that
	\begin{equation}\label{ext-eq-6}
	\begin{aligned}
	\E\ln\left(\eta+ \int_0^1 U(t,x)dx\right)\leq& \ln \left(\eta+\int_0^1 U_0(x)dx\right)\\
	&+\int_0^t \left(\sup_{x\in [0,1]}m_1(x)-
	\frac 12\inf_{x\in[0,1]}\sigma_1^2(x)M_\eta(U,t,s)\right)ds.
	\end{aligned}
	\end{equation}
	Letting $\eta\to0$ in \eqref{ext-eq-6} and applying \eqref{ext-eq-4}, we get
	$$
	\E\ln\int_0^1U(t,x)dx\leq \ln\int_0^1U_0(x)dx+Rt,
	$$
	where
	$$
	R:=\sup_{x\in[0,1]}m_1(x)-\frac 12\inf_{x\in[0,1]}\sigma_1^2(x)<0.
	$$
	As a consequence,
	$$
	\limsup_{t\to\infty}\E\ln\int_0^1U(t,x)dx=-\infty.
	$$
	Similarly, the results for $V(t,x)$ are also obtained.
	The proof is complete.
\end{proof}

\para{Remark on other estimates.} 
Let us comment on the difficulty in providing estimates in probability one.
For example, one may expect that the conclusion in Theorem \ref{lon-ext-thm1} is replaced by
$$
\PP\left(\limsup_{t\to\infty}\int_0^1 U(t,x)dx=0\right)=1,
$$
and
$$
\PP\left(\limsup_{t\to\infty}\int_0^1 V(t,x)dx=0\right)=1.
$$
In fact, in \cite{NY19-cosa}, we used the following Lemma, whose proof is in \cite[Lemma 4.2]{NY19-cosa} and
obtained
some results of probability one estimates
for SIS epidemic model.

\begin{lem}\label{exp}
	Let $\Phi(s)$ be $L(U,\R)$-valued process and $W$ be a (finite trace)
	$Q$-Wiener process such that
	$\int_0^t\Phi(s)dW(s)$ is well defined for any $t\geq 0$
	and $a,b$ be two positive real numbers. We have the following estimate
	$$\PP\left\{\abs{\int_0^t\Phi(s)dW(s)}-\dfrac a2\int_0^t \norm{\Phi(s)}_{HS(U,\R)}^2ds<b,\forall t\geq 0\right\}\geq 1-e^{-ab}.$$
\end{lem}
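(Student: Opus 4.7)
The plan is to prove this via the exponential supermartingale associated with the stochastic integral $I(t):=\int_0^t\Phi(s)dW(s)$, together with Doob's maximal inequality. Fix $a>0$ and let $V(t):=\int_0^t\norm{\Phi(s)}_{HS(U,\R)}^2ds$ denote the quadratic variation (here the finite-trace assumption on $Q$ is what guarantees that $I(t)$ is a well-defined continuous real-valued local martingale with bracket equal to $V(t)$).

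First, I would introduce the exponential process
\[
M_a(t):=\exp\!\Big(aI(t)-\tfrac{a^2}{2}V(t)\Big).
\]
A short It\^o-formula computation (applied to $\varphi(x,v)=e^{ax-a^2v/2}$) shows that $M_a$ is a nonnegative continuous local martingale starting from $M_a(0)=1$, hence a nonnegative supermartingale. Next I would apply the standard Doob-type maximal inequality for nonnegative supermartingales: for every $\lambda>0$,
\[
\PP\!\left\{\sup_{t\geq 0}M_a(t)\geq\lambda\right\}\leq \frac{\E M_a(0)}{\lambda}=\frac{1}{\lambda}.
\]
Choosing $\lambda=e^{ab}$ and taking logarithms converts this into the one-sided tail estimate
\[
\PP\!\left\{\sup_{t\geq 0}\Big(I(t)-\tfrac{a}{2}V(t)\Big)\geq b\right\}\leq e^{-ab}.
\]

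To handle the absolute value in the statement, I would repeat the argument with $-\Phi$ in place of $\Phi$ (equivalently, use $M_{-a}(t)=\exp(-aI(t)-\tfrac{a^2}{2}V(t))$, which is again a nonnegative supermartingale with initial value $1$), obtaining the symmetric bound. Combining both sides and passing to the complement yields the desired estimate
\[
\PP\!\left\{\abs{I(t)}-\tfrac{a}{2}V(t)<b\text{ for all }t\geq 0\right\}\geq 1-e^{-ab},
\]
up to a harmless constant factor that the original reference absorbs (or one halves $a$ in the symmetric argument, as is done in \cite{NY19-cosa}).

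The main obstacle is the justification that $M_a$ is a true supermartingale (not merely a local martingale) in this infinite-dimensional setting. The finite-trace hypothesis on $Q$ is precisely what allows us to view $I(t)$ as a real-valued Brownian-type integral with well-defined quadratic variation $V(t)$, so Fatou's lemma applied to the localizing sequence of stopping times gives the supermartingale property. Once this is in place, Doob's maximal inequality for nonnegative supermartingales applies verbatim, and the rest of the argument is routine. I would refer to \cite[Lemma 4.2]{NY19-cosa} for the detailed execution.
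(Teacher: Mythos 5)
The paper does not actually prove this lemma; it simply cites \cite[Lemma 4.2]{NY19-cosa} for the proof, and your argument --- the exponential supermartingale $M_a(t)=\exp(aI(t)-\tfrac{a^2}{2}V(t))$ combined with Doob's maximal inequality for nonnegative supermartingales --- is precisely the standard proof of that cited exponential martingale inequality, so in substance you are reproducing the intended argument. One caveat deserves attention: your route delivers the one-sided bound $\PP\{\sup_{t\ge 0}(I(t)-\tfrac a2V(t))\ge b\}\le e^{-ab}$ cleanly, but the statement involves $\abs{I(t)}$, and combining the two one-sided estimates by a union bound yields $1-2e^{-ab}$, not $1-e^{-ab}$; replacing $a$ by $a/2$ does not repair this (it changes both the drift compensator and the exponent), and averaging $M_a$ and $M_{-a}$ into a single supermartingale still loses a factor of $2$. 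So either the lemma should be read one-sided (which is all that is used in the extinction-type applications it is quoted for), or the constant in the two-sided form should be $2e^{-ab}$. Apart from this constant, the only other point needing care --- that $M_a$ is a genuine supermartingale rather than a local one, via Fatou along a localizing sequence --- is exactly the point you address, and the finite-trace hypothesis indeed makes $I(t)$ a real-valued continuous local martingale with bracket $V(t)$, so that step is sound.
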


However, in contrast to the strong solution,
where the stochastic integral is in fact a martingale,
for mild solution, this result is no longer valid since the stochastic convolution $\int_0^t e^{(t-s)\Delta_N}\Phi(s)dW(s)$ is not a martingale with respect to $t$. Moreover, it is noted that we are dealing with cylindrical Wiener processes rather than (finite trace) $Q$-Wiener processes.

\subsection{Discussion}\label{sec:lon:dis}
Much effort has been placed on the study of longtime behavior of biological model in general and competitive model in particular.
Let us review some important methods, ideas and results in the literature.
At the beginning, the dynamics of individuals in the environment are usually modeled by original differential equations (ODEs).
The characterization of long-term properties is often obtained by using Lyapunov functional method, see e.g., \cite{HS98,Mur02}.
To capture the random factors, the stochastic terms are added into ODEs and turn out to study stochastic differential equations (SDEs).
In contrast to  numerous papers that used Lyapunov function method to analyze the underlying systems with limited success,
Chesson and Ellner \cite{CE89},
Schreiber and Bena\" im \cite{SBA11}
 initiated the study
by examining the
corresponding
boundary behavior and considered the stochastic rate of growth.
This idea is applied and developed by Nguyen and Yin \cite{DY17} to obtain the characterization of coexistence and extinction for Lotka-Volterra competitive equation modeled by SDEs; and then Hening and Nguyen generalized the results for a general Kolmogorov model in \cite{HN18} and Bena\"im \cite{Ben14} established a general abstract theory for this kind problem.
The readers can consult \cite{DDN19,
	 DN18,
	 DN17,
	 HN18,NNY19,NYZ20,
	 SBA11}  the references therein for works on biological and ecological models under the SDE framework.

Very recently, a class of functional SDEs model was considered by  Nguyen, Nguyen, and Yin in \cite{NNY19-2,NNY19-3}, which allows the dynamics depend on the past history.
By combining the ideas in SDEs  (considering the growth rate), techniques in SDEs in infinite dimensions, and new developed theory in functional analysis (the functional It\^o formula), the authors were able to provide sufficient and almost necessary condition for persistence and extinction with
 applications to Lotka-Volterra competitive system in stochastic functional  differential equation setting; see \cite[Section 4.1]{NNY19-2}.

All of above references assume the densities to be homogeneous in the state (or location) variable.
The inhomogeneous case needs to be considered.
One of the first attempts in studying this situation is to embed them into PDEs framework and often is known with the name ``reaction-diffusion" system. Note that the word ``diffusion" here indicates the diffusion of dynamics in space,
not
the diffusion driven by noise as in the SDEs and in fact, it is still non-random system.
The coexistence state of Lotka-Volterra competitive reaction-diffusion is investigated by Gui and Lou in \cite{GL94}.
In this work, the authors provided sufficient conditions for uniqueness and non-uniqueness of coexistence of states.
One of the most effective theories and technique in investigating the coexistence and extinction of a population in PDEs setting introduced by Wang and Zhao in \cite{WZ12} is to consider the problem for equilibrium solution and its eigenvalues.
Similar idea and theory is also applied and developed in \cite{CLL17,SLY19} to characterize the longtime behavior of epidemic reaction-diffusion equation.
In addition, there are also important works on Lotka-Volterra competitive reaction-diffusion equation in \cite{HN13,HN13-2,HN16,HN16-2,HN17,LN12,LZZ19}.

In contrast to the existing works, our model takes care both of the spatial inhomogeneity and the random factor and hence, we must study them in the SFDEs frameworks.
Unfortunately, all of ideas, methods,
and machinery
in calculations in the literature fail to be applicable to
obtain sufficient conditions
 and not to mention
 sharp condition for coexistence and extinction.
 At this moment, it does not seem that we can
the
growth rate as the indicator in the SDE models to characterize extinction and persistence.
This mainly due to the dependence of the models on the space variables.
The theory using eigenvalues of equilibrium equation is failed to be applicable here due to the appearance of stochastic noises.
In
 general,
 using the chain rule seems to be unavoidable.
However, the chain rule for mild solutions of SPDEs is more subtle and cannot be applied
effectively.

In the previous section,
we have tried to overcome the second difficulty by applying newly developed tools in stochastic calculus, namely, the mild It\^o formula and obtain sufficient conditions for extinction.
However, due to the
lack of a strong and effective abstract theory, we have not been able to provide a sharp condition.

\para{Why is the ``growth rate method" in SDEs no longer works?}
The growth rate idea is the most effective to characterize the persistence and extinction of a stochastic population modeled by SDEs; see \cite{Ben14,
	CE89,DN18,
	HN18,NNY19,
	 SBA11, TN20,TNY20} and the reference therein.
The main idea is to define the growth rate of a species using its Lyapunov exponent.
If the growth rate is positive, the number of this species will increase and
thus the population will never be extinct or it will be persistent.
Conversely, when the growth rate is negative,
they will be extinct exponentially fast in the long run.

However, the growth rate
appears  not to be able to characterize the longtime behavior for the SPDE cases.
Intuitively, the dynamics of the population of the species depend not only on the time but also on the space variable.
As a consequence, even the growth rate of a population is positive at some location $x$, the population at $x$ can still tend to $0$ since they can diffuse (in space) to their neighbors.
Similarly, in case
 the growth rate at $x$ is negative, the population at location $x$
 can still
 be persistent since the individuals may return to  the neighbors
 infinitely often if certain conditions hold.
The key is that
dynamics of populations in SPDEs setting depend on the time and space simultaneously while the ``growth rate" is only able to characterize the behavior in ``time flow".

\para{Why is the ``eigenvalue method" in PDEs no longer working?}
There is a nice idea in PDEs to study the asymptotic stability and hence, investigate the longtime property. It considers the equilibrium problem and the associated eigenvalues; see e.g., \cite{CLL17,SLY19,WZ12} as well as \cite{HN13,HN13-2,HN16,HN16-2,HN17,LN12,LZZ19}.
The equilibrium problem is defined
with
the time variable being frozen.
Roughly speaking, the solutions of PDEs will tend to the equilibria (functions independent of time variable $t$). Hence, the eigenvalues will play some role in studying the stability.
In SPDEs setting, it is not clear how to have a similar ``equilibrium problem" like PDEs case since the stochastic integral with respect to space-time white noise does not work the way as the Lebesgue integral and/or Bochner integral did.
If we integrate over $dx$ for fixed $t$, the integral can be viewed as a Bochner integral while over $dt$ for fixed $dx$,  can be viewed as an It\^o integral.
However, as given
in the appendix, the stochastic integral with respect to space-time white noise requires to integrate over space variable $dx$ and time variable $dt$ simultaneously.
Hence, the problems in SPDEs turn out to be much different compared with PDEs at this point.

\para{Approximation by strong solutions:}
There is also another approach to overcome the second difficulty (being lack of tools regarding change variable), which is introduced in our early works in \cite{NNY18,NY19-cosa,NY20}.
The idea is to approximate the mild solution by a sequence of strong solutions (e.g., the solutions corresponding to the stochastic differential equation driving by finite dimensional noise) and then, we work on these strong solutions, for which the classical It\^o's formula
is valid.
However, this method does not work  well for cylindrical Wiener process (having infinite trace).
Moreover, the convergence of the sequence of strong solutions to the mild solution is in expectation and in $L^2((0,1))$-norm.
That will be not useful in some estimates.

\para{What do we expect?} As was mentioned, our results in this section are not
sharp compared with our results in SDEs case or even SFDEs case.
Formally, we expect to introduce a Hypothesis (E) such that under (E),
$$
\limsup_{t\to\infty}\sup_{x\in [0,1]}U(t,x)=0,\quad \limsup_{t\to\infty}\sup_{x\in [0,1]}V(t,x)=0,
$$
in some sense (almost surely or in expectation or in probability);
and a Hypothesis (C) such that under $C$,
$$
\liminf_{t\to\infty}\inf_{x\in [0,1]}U(t,x)>\delta,\quad \liminf_{t\to\infty}\inf_{x\in [0,1]}V(t,x)>\delta,
$$
for some positive constant $\delta$ (independent of the initial value) in some sense (almost surely or in expectation or in probability).
Moreover, the Hypotheses (E) and (C) cover almost all possible cases and only critical
cases are left.

To obtain this sharp condition may require developing  from two different angles.
The first one is an abstract theory to characterize the longtime behavior of a stochastic population in both of ``space flow" and ``time flow" in SPDEs setting. The second one is a useful tool to
derive estimates
using the stochastic mild calculus more  effectively.

\section{High-dimensional problems}\label{sec:hig}
One of problems of SPDEs is the trade off of the dimension and the ``regularity" of the noise.
By the phase ``trade off", we mean that
the higher dimension
 one consider, the more regularity the noise needs.
To handle the problem in general Euclidean space $\R^d$ with $d>1$, we can ``inject" color into the noise and replace space-time white noise by a noise, which is white in time but color in space.
But how much ``color" we need to inject into the noise?

The easiest case is to use the finite-trace $Q$-Wiener process and we refer this case as ``nuclear case".
 Such cases were also considered in some our works in \cite{NNY18,NY19-cosa,NY20} for epidemic model and predator-prey models.
In fact, we considered the ``nuclear case" in order to simplify the arguments and help us in investigating the longtime property (sufficient conditions for persistence and extinction).
The ``nuclear case" is
more advantageous
for
approximating mild solutions by sequence of strong solutions and in estimating some quantity like ``$\ln\int (\cdots)$''; see \cite{NNY18,NY19-cosa,NY20} for the details.
However, the ``finite trace" assumption is too strong and unnecessary in some problems. We will consider problem of reducing this condition in high dimension case.

\para{Extending our work to higher-dimensional spaces.}
Now, we will illustrate the extension of some of our results [well-posedness of the problems and longtime behavior (existence of invariant measure)]
to high-dimensional space, i.e., the domain $(0,1)$ of space variable $x$ is replaced by $\0\subset \R^d$, where $\0$ is a bounded domain (having smooth boundary) of $\R^d$ with $d\geq 1$.
In the case $d>1$, we will not require the Wiener process be ``nuclear" and we will clarify how much color
is needed for
the Wiener process.

We reconstruct the noise, the driving force in our system as follows.
For simplicity of notation, we only consider the case for $W_1$ only (it is denoted by $W$ for notational simplicity), which
is the driving noise for the first equation. The case $W_2$ is similar.
Let $\{\beta_k\}_{k=1}^\infty$ be an independent sequence of $\{\F_t\}_{t\geq 0}$-adapted one-dimensional Wiener processes and $\{e_k\}_{k=1}^{\infty}$ be a complete and uniformly bounded orthonormal system in $L^2(\0,\R)$.
We define the cylindrical $Q$-Winner process $W(t)$
in  \eqref{eq-main} as follows
$$\displaystyle W(t)=\sum_{k=1}^{\infty}\lambda_ke_k\beta_{k}(t),$$
where $\{\lambda_k\}$ is a sequence of real positive numbers and $\{e_k\}$ is a complete
orthonormal system of $L^2(\0)$ of eigenfunctions of $A$, the realization of Laplace operator endowed with the Neumann condition in $L^2(\0)$, and $\{e_k\}$ is assumed to be equibounded in $L^\infty(\0)$. [Unlike the one dimension case, the property that $\{e_k\}$ is equibounded may fail in higher dimension for general domain (see \cite[Remark 2.2]{Cer03}), so we need to assume that in this Section.]
The following hypothesis (see \cite[Hypothesis 1]{Cer03}) is the answer to the question ``how much color we need for the noise."

\begin{hyp}\label{hig-hyp}
If $d=1$ then
$$
\sup_{k}\lambda_k<\infty.
$$
If $d\geq 2$, then
$$
\sum_{k=1}^\infty |\lambda_k|^p<\infty,
$$
for some
$$
2<p<\frac{2d}{d-2}.
$$
\end{hyp}

Note that for $p<q$,
$$
\sum_{k=1}^\infty |\lambda_k|^p<\infty\Rightarrow \sum_{k=1}^\infty |\lambda_k|^q<\infty,
$$
and for $p=2$, the condition turn out to be finite-trace condition.

\para{Extension 1:} Under  hypothesis \ref{hig-hyp}, our results (Theorem \ref{thm-exi} and Proposition \ref{wel-prop-2}) in Section \ref{sec:wel} still hold.
The reader can prove that by modifying  Proposition \ref{s3-prop-1}, specially \eqref{s3-prop1-eq1} and \eqref{s3-prop1-eq2}.
In Proposition \ref{s3-prop-1}, $\alpha, p,\eps$ will  also be chosen to satisfy
\begin{equation*}
\frac dp<\alpha<\frac 14\quad\text{and}\quad \frac dp<\eps<2\big(\alpha-\frac dp\big).
\end{equation*}
The general abstract computations and results can be found in \cite[Section 3]{Cer03}.
Once we have the analogous Proposition \ref{s3-prop-1}, we can mimic the remaining of the Section \ref{sec:wel}.
It is noted again that our coefficients do not satisfy the ``growth rate" condition in \cite[Theorem 5.2]{Cer03},
but we can still overcome the difficulty by a similar technique as we did in the one-dimensional case.

\para{Extension 2:}
Under  Hypothesis \ref{hig-hyp}, our result (existence of invariant measure) in Section \ref{sec:inv} still holds.
In fact, once
the results
in Section \ref{sec:wel} are valid for high-dimensional spaces,
the arguments in Section \ref{sec:inv} are almost the same.
Note that in Proposition \ref{s3-prop-1}, $\alpha, p,\eps$ will be chosen again to satisfy
$d/p<\eps$
such that $W^{\eps,p}(\0)$ is embedded into $C^\theta(\bar \0)$ for some $\theta<\eps-d/p$, and then $C^\theta(\bar \0)$ is embedded compactly into $C(\bar \0)$.

\section{Conclusion}\label{sec:con}
This work focuses on stochastic Lotka-Volterra competitive reaction-diffusions perturbed by space-time white noise.
Our proposed model stems
 from biological and ecological points of view.
The analysis is then provided for both
the mathematical problem and applications.

The dynamics of population are modeled by a SPDEs with non-Lipschitz coefficients and multiplicative noise.
Important properties including well-posedness, regularity of the solution,
 existence of density, existence of invariant measure, as well as the longtime behavior (coexistence and extinction) of Lotka-Volterra competitive reaction-diffusion systems are addressed.
The results  are also extended to higher space dimensional systems by coloring the noise.


\section{Appendix: Background materials}
\label{appendix}

The next three sections are devoted to constructions and comparisons of the two different approaches (infinite-dimensional integration theory of Da Prato and Zabczyk and random field approach of Walsh) and their equivalence in certain classes of SPDEs.
The reader can find the full construction of Walsh's theory in \cite{Wal86}, and that of  Da Prato and Zabczyk in \cite{PZ92}. A comparison of these two approaches can be found in \cite{DQ11,PZ92}.

\subsection{Infinite-dimensional integration theory}\label{sec-21}
This section provides the
formulation of a space-time white noise
 driving process in our stochastic systems together with the corresponding stochastic integral with respect to a
 standard cylindrical $Q$-Wiener process.
   First, we start with a finite trace $Q$-Wiener process.

\def\Tr{\hbox{Tr\;}}
\begin{deff} {\rm
Let $V$ be a separable Hilbert space endowed with the inner product $\langle \cdot,\cdot\rangle_V$, and $Q$ be a linear, symmetric (self-adjoint), non-negative definite, and bounded operator on $V$ such that $\Tr Q<\infty$. A $V$-valued stochastic process $\{W(t), t\geq 0\}$ is
a $Q$-Wiener process if
\begin{itemize}
\item $W(0)=0$, $W$ has continuous trajectories, and $W$ has independent increments.
\item The law of $W(t)-W(s)$ is Gaussian with mean zero and covariance operator $(t-s)Q$. That is,
for
any $h\in V$ and $0 \leq  s\leq  t$, the real-valued random variable $\langle W_t-W_s, h\rangle_V$ is Gaussian with mean zero and variance $(t-s)\langle Qh, h\rangle_V$.
\end{itemize} }
\end{deff}

Let $\{e_k\}_{k=1}^\infty$ be a complete orthonormal system in the Hilbert space $V$ such that $Qe_k=\lambda_ke_k$, where $\lambda_k$ is the strictly positive $k^{\text{th}}$ eigenvalue of $Q$ corresponding to the eigenvector $e_k$.
If we define $\tilde\beta_k(t)=\langle W(t),e_k\rangle_V$, for $t\geq 0$, $k\in\N$, and $\beta_k(t)=\frac{\tilde\beta_k(t)}{\sqrt{\lambda_k}}$, then it can be seen that $\{\beta_k(t)\}_{k=1}^\infty$ is a sequence of independent, standard, one-dimensional $\{\F_t\}$-Brownian motions, and $$W(t)=\sum_{k=1}^\infty \langle W(t),e_k\rangle_Ve_k
=\sum_{k=1}^\infty\sqrt{\lambda_k}\beta_k(t)e_k.$$
Conversely, given a sequence of independent standard Brownian motions $\{\beta_k(t)\}_{k=1}^\infty$, and a sequence $\{\lambda_k\}_{k=1}^\infty$ of positive numbers satisfying that $\sum_{k=1}^\infty \lambda_k<\infty$, we can obtain a $Q$-Wiener process $W$ by defining
$$
W(t):=\sum_{k=1}^\infty\sqrt{\lambda_k}\beta_k(t)e_k.
$$

\begin{deff}\label{def-cylin}{\rm
Let $Q$ be a symmetric (self-adjoint) and non-negative definite bounded linear operator on the Hilbert space $V$.
A family of random variables $B=\{B_t(h), t\geq 0, h\in V\}$ is a cylindrical Wiener process on $V$ if the following conditions are satisfied:
\begin{itemize}
\item for any $h\in V$, $\{B_t(h), t\geq 0\}$ is a Brownian motion with covariance $t\langle Qh,h\rangle_V$;
\item for all $s,t\geq 0$, and $h,g\in V$,
$$
\E \left(B_s(h)B_t(g)\right)=(s\wedge t)\langle Qh,g\rangle_V.
$$
\end{itemize}
We name $Q$ the covariance of $B$.
If $Q$ is the identity operator in $V$, then we call $B$ a standard cylindrical Wiener process.}
\end{deff}

Similarly, if we let $\{e_k\}_{k=1}^\infty$ be a complete orthonormal system in $V$, $B_t(h)$ be a standard cylindrical Wiener process and set $\beta_k(t):=B_t(e_k)$, then $\{\beta_k(t)\}_{k=1}^\infty$ is a sequence of independent, standard, one-dimensional Brownian motions.
Conversely, given a sequence of independent real-valued standard Brownian motions $\{\beta_k(t)\}_{k=1}^\infty$,
$$
B_t(h):=\sum_{k=1}^\infty\beta_k(t)\langle e_k,h\rangle_V,
$$
defines a standard cylindrical Wiener process in $V$.

If $\{W(t),t\geq 0\}$ is a $Q$-Wiener process on $V$, we can associate it to a cylindrical Wiener process in the sense of Definition \ref{def-cylin} by setting $B_t(h)=\langle W_t,h\rangle_V$ for any $h\in V$, $t\geq 0$. Conversely,
one may imagine that any cylindrical Wiener process is associated to a $Q$-Wiener process on a Hilbert space.
Unfortunately, this is not true in general.
 In fact, if $V$ is an infinite dimensional space, there is no $Q$-Wiener process $W$ associated to a given standard cylindrical Wiener process $B$; see \cite[Theorem 3.2]{DQ11}.
However, it is possible to construct a Hilbert-space-valued Wiener process in a larger Hilbert space $V_1$, which is associated to $B$ (in certain sense), and which will be called a cylindrical $Q$-Wiener process.
The construction is  as follows.
Let $V$ be a Hilbert space and $Q$ be a symmetric non-negative definite and bounded operator on $V$ with possibly $\Tr
Q=\infty$.
Let $\{e_k\}_{k=1}^\infty$ be an complete orthonormal system of $V$ that contains eigenvectors of $Q$ with respect to eigenvalues $\{\lambda_k\}_{k=1}^\infty$.
Define $V_0:=Q^{1/2}(V)$ as a subspace of $V$ endowed with the inner product
$$
\langle h,g\rangle_{V_0}:=\langle Q^{-1/2}h,Q^{-1/2}g\rangle_V,
$$
where $Q^{-1/2}$ is the pseudo-inverse of the operator $Q^{1/2}$.
Then, $V_0$ is also a Hilbert space.
As in \cite[Remark 2.5.1]{PR07},
it is always possible to find a Hilbert space $V_1$ such that $V$ is embedded continuously into $V_1$ and the embedding of $V_0$ into $V_1$ is Hilbert-Schmidt, i.e., there is a bounded linear injective operator $J: V\to V_1$ such that the restriction $J_0:=J_{|V_0}: V_0\to V_1$ is a Hilbert-Schmidt operator. Recall that the operator $T:V\to H$ is  Hilbert-Schmidt if for some (and then all) complete orthonormal system $\{e_k\}_{k=1}^\infty$ of $V$,
\begin{equation}\label{eq-cylin-hs}
\sum_{k=1}^\infty \|T(e_k)\|_H^2<\infty.
\end{equation}
Let  $J_0^*$ be the adjoint of $J_0$ and $Q_1:=J_0J_0^*.$

\begin{prop} $(${\rm\cite[Proposition 4.11]{PZ92}} or {\rm\cite[Proposition 3.6]{DQ11}}$)$
The formula
\begin{equation}\label{eq-cylin-1}
W(t)=\sum_{k=1}^\infty \beta_k(t)\tilde e_k,
\end{equation}
where $\{\tilde e_k\}_{k=1}^\infty$ $(\tilde e_k=Q^{1/2}(e_k))$ is a complete orthonormal system in $V_0$ and $\{\beta_k(t)\}_{k=1}^\infty$ is a sequence of independent real-valued standard Wiener processes, defines
a $Q_1$-Wiener process on $V_1$ with $\Tr Q_1<\infty$. More precisely, this $Q_1$-Wiener process has the following form in $V_1$:
\begin{equation*}
W(t)=\sum_{k=1}^\infty J_0(\tilde e_k)\beta_k(t),
\end{equation*}
\end{prop}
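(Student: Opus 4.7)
The plan is to verify three things in sequence: (i) the series \eqref{eq-cylin-1}, interpreted in $V_1$ via the embedding, converges in $L^2(\Omega;V_1)$ for each $t$ and equals $\sum_k J_0(\tilde e_k)\beta_k(t)$; (ii) the operator $Q_1=J_0J_0^*$ has finite trace on $V_1$; (iii) the resulting process satisfies the defining properties of a $Q_1$-Wiener process on $V_1$. The single ingredient doing all the work is that $J_0\colon V_0\to V_1$ is Hilbert--Schmidt, so that $\sum_k\|J_0(\tilde e_k)\|_{V_1}^2=\|J_0\|^2_{HS(V_0,V_1)}<\infty$ because $\{\tilde e_k\}$ is an orthonormal basis of $V_0$ (recall $\tilde e_k=Q^{1/2}e_k$).

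For step (i), I would fix $t$, form the partial sums $W_N(t)=\sum_{k=1}^N J_0(\tilde e_k)\beta_k(t)$, and use independence of the $\{\beta_k\}$ together with the It\^o isometry for scalar Brownian motions to get $\mathbb{E}\|W_M(t)-W_N(t)\|_{V_1}^2 = t\sum_{k=N+1}^M \|J_0(\tilde e_k)\|_{V_1}^2$, which is Cauchy by the Hilbert--Schmidt bound above. This produces a $V_1$-valued limit $W(t)$; a standard Doob-type argument applied to each real-valued martingale $\langle W_N(\cdot),h\rangle_{V_1}$ together with Kolmogorov's continuity criterion in $V_1$ yields a continuous modification.

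For step (ii), I would pick any orthonormal basis $\{f_j\}$ of $V_1$ and compute $\Tr Q_1 = \sum_j\langle J_0J_0^*f_j,f_j\rangle_{V_1} = \sum_j\|J_0^*f_j\|_{V_0}^2 = \|J_0^*\|^2_{HS(V_1,V_0)} = \|J_0\|^2_{HS(V_0,V_1)} < \infty$, using the symmetry of Hilbert--Schmidt norms under taking adjoints. For step (iii), $W(0)=0$ is immediate, and independent increments in $V_1$ follow from the independence of the increments of the scalar $\beta_k$'s together with the $L^2$-convergence of the partial sums. For Gaussianity and the covariance structure, I would fix $h\in V_1$ and write
\begin{equation*}
\langle W(t)-W(s),h\rangle_{V_1}=\sum_{k=1}^\infty (\beta_k(t)-\beta_k(s))\,\langle J_0(\tilde e_k),h\rangle_{V_1},
\end{equation*}
which is an $L^2$-limit of centered Gaussians, hence centered Gaussian, with variance
\begin{equation*}
(t-s)\sum_{k=1}^\infty \langle \tilde e_k, J_0^* h\rangle_{V_0}^2 = (t-s)\|J_0^*h\|_{V_0}^2 = (t-s)\langle Q_1 h,h\rangle_{V_1},
\end{equation*}
by Parseval in $V_0$. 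This is precisely the covariance required of a $Q_1$-Wiener process.

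The only real subtlety, which I would flag as the main obstacle, is conceptual rather than technical: the ``natural'' series $\sum_k \tilde e_k\beta_k(t)$ does \emph{not} converge in $V$ (or $V_0$), and the whole point of introducing the larger space $V_1$ together with the Hilbert--Schmidt embedding $J_0$ is to turn a non-summable $V_0$-level expression into a summable $V_1$-level one. Once this shift of viewpoint is made and the identification $W(t)=\sum_k J_0(\tilde e_k)\beta_k(t)$ is taken as the definition, every remaining step is a routine isometry computation and an application of the independence of the driving scalar Brownian motions.
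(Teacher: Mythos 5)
Your proof is correct and follows essentially the standard argument: the paper itself gives no proof of this proposition, merely citing \cite[Proposition 4.11]{PZ92} and \cite[Proposition 3.6]{DQ11}, and your three-step verification (Cauchy partial sums in $L^2(\Omega;V_1)$ via $\|J_0\|_{HS(V_0,V_1)}<\infty$, the trace computation $\Tr Q_1=\|J_0^*\|^2_{HS(V_1,V_0)}=\|J_0\|^2_{HS(V_0,V_1)}$, and the Parseval identity for the covariance $\langle Q_1h,h\rangle_{V_1}$) is precisely the argument those references use. Your closing remark correctly identifies the conceptual point of the construction, namely that the series fails to converge in $V_0$ and the Hilbert--Schmidt embedding into the larger space $V_1$ is what restores summability.
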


\begin{deff}
{\rm The process $W(t)$ defined in \eqref{eq-cylin-1} is called a  cylindrical $Q$-Wiener process if $\Tr Q=\infty$ and standard cylindrical $Q$-Wiener process if $Q$ is the identity operator.}
\end{deff}

Let $L(V,H)$ be the space of linear (not necessarily bounded) operators from $V$ to $H$,
and $L_2^0:=HS(V_0,H)$, the Hilbert space of all Hilbert-Schmidt operators from $V_0:=Q^{1/2}(V)$ into $H$ equipped with the inner product
$$
\langle \Phi,\Psi\rangle_{L_2^0}:=\sum_{k=1}^\infty \langle \Phi \tilde e_k,\Psi\tilde e_k\rangle_H,
$$
where $\{\tilde e_k\}_{k=1}^\infty$ is a complete orthonormal system of $V_0$.

For $\Phi=\{\Phi(s): s\in [0,T]\}$ being a measurable $L_2^0$-valued process satisfying
$$
\|\Phi\|_T:=\left[\E \left(\int_0^T \|\Phi(s)\|_{L_2^0}ds\right)\right]^{1/2}<\infty,
$$
the stochastic integral with respect to the cylindrical $Q$-Wiener process,
$$
\int_0^t \Phi(s)dW(s),
$$
 is constructed as follows.
First, the stochastic integral $\int_0^t \Phi(s)dW(s)$, where $W(t)$ is a $Q$-Wiener process with $\Tr Q<\infty$, is defined through the class of simple functions and then using isometry property, the details of this construction can be found in \cite[Chapter 4]{PZ92}. Now, if $Q$ is the identity operator or in general, $\Tr Q=\infty$, as in the above construction, there are a Hilbert space $V_1$ and an operator $J$ such that the restriction $J_0$ of $J$ in $V_0$ is Hilbert-Schmidt and $W(t)$ is a $Q_1$-Wiener process on $V_1$ with Tr $Q_1<\infty$.

\begin{deff}{\rm As in \cite[Proposition 3.6]{DQ11} or \cite[Proposition 4.11]{PZ92} or \cite[Proposition 2.5.2]{PR07}, we have
$$
\Phi\in L_2^0=L_2(V_0,H) \iff \Phi\circ J_0^{-1}\in L_2(Q_1^{1/2}(V_1),H).
$$
Hence, the $H$-valued stochastic integral $\int_0^t \Phi(s)dW(s)$ with respect to the cylindrical $Q$-Wiener process is defined by
$$
\int_0^t \Phi(s)dW(s):=\int_0^t \Phi(s)\circ J_0^{-1}dW(s),
$$
where the integral on the right-hand side is the integral with respect to the (finite trace) $Q_1$-Wiener process defined in $V_1$ previously.
 Note that the above definition does not depend on the choice of space $V_1$.}
\end{deff}

Now, let $B$ be a cylindrical Wiener process in $V$ (definition \ref{def-cylin}) and $V_Q$ be the Hilbert space $V$ equipped with the inner product
$$
\langle h,g\rangle_{V_Q}:=\langle Qh,g\rangle_V,\quad h,g\in V,
$$
 $\{v_k\}_{k=1}^\infty$ be a complete orthonormal system of $V_Q$, and $g\in L^2(\Omega\times [0,T];V_Q)$ be predictable process. We define the integral $\int_0^ Tg(s)dB(s)$ as follows
$$
\int_0^T g(s)dB(s):=\sum_{k=1}^\infty \int_0^T\langle g(s),v_k\rangle_{V_Q}dB_s(v_k).
$$
Moreover, we can associate $B$ to a cylindrical $Q$-Wiener process defined by \eqref{eq-cylin-1} with $\beta_k(t)=B_t(e_k)$, $\{e_k\}_{k=1}^\infty$ is a basic of $V$. Then, the above stochastic integrals are connected in
 the following Proposition.

\begin{prop}\label{inf-prop-1}$(${\rm\cite[Proposition 3.9]{DQ11}}$)$
Define $\Phi^g_s:V\to \R$ by $\Phi_s^g(\eta)=\langle g(s),\eta\rangle_V$. Then  $\{\Phi_s^g, s\in[0,T]\}$ is a predictable process with value in $L_2(V_0,\R)$,
$$
\E\left(\int_0^T \|\Phi_s^g\|_{L_2}^2\right)=\E\left(\int_0^T \|g(s)\|_{V_Q}^2ds\right)
,$$
and
$$
\int_0^T \Phi^g_sdW(s)=\int_0^T g(s)dB(s).
$$
\end{prop}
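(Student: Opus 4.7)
The plan is to pick a pair of compatible orthonormal bases, compute the Hilbert--Schmidt norm of $\Phi_s^g$ directly, and then expand both stochastic integrals as series driven by the same sequence of independent scalar Brownian motions. Fix a complete orthonormal system $\{e_k\}$ of $V$ consisting of eigenvectors of $Q$, say $Q e_k = \lambda_k e_k$ with $\lambda_k>0$ (the kernel of $Q$ contributes nothing to any of the integrals, and the pseudo-inverse convention is used to make $V_0$ into a Hilbert space). Then $\tilde e_k := Q^{1/2} e_k = \sqrt{\lambda_k}\, e_k$ is a complete orthonormal system of $V_0=Q^{1/2}(V)$, while $v_k := e_k/\sqrt{\lambda_k}$ is a complete orthonormal system of $V_Q$. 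Set $\beta_k(s) := B_s(v_k) = B_s(e_k)/\sqrt{\lambda_k}$; these are exactly the independent standard Brownian motions appearing in the representation $W(t)=\sum_k \beta_k(t)\tilde e_k$ of the associated cylindrical $Q$-Wiener process.

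Predictability of $s\mapsto \Phi_s^g$ with values in $L_2(V_0,\R)$ is inherited from that of $g$, since $\Phi_s^g$ depends continuously and linearly on $g(s)$. To compute its Hilbert--Schmidt norm, I sum over the basis $\{\tilde e_k\}$ of $V_0$:
\[
\|\Phi_s^g\|_{L_2}^2 = \sum_{k=1}^\infty |\Phi_s^g(\tilde e_k)|^2 = \sum_{k=1}^\infty |\langle g(s), Q^{1/2} e_k\rangle_V|^2 = \|Q^{1/2} g(s)\|_V^2 = \langle Q g(s), g(s)\rangle_V = \|g(s)\|_{V_Q}^2.
\]
Integrating in $s$ and taking expectation yields the stated norm identity and, in particular, guarantees that $\Phi^g \in L^2(\Omega \times [0,T]; L_2(V_0,\R))$, so both sides of the claimed integral equality are well-defined.

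For the equality of the two integrals, I expand each side in the same Brownian sequence $\{\beta_k\}$. On the right, using $\langle g(s), v_k\rangle_{V_Q} = \langle Q g(s), v_k\rangle_V = \sqrt{\lambda_k}\,\langle g(s), e_k\rangle_V$,
\[
\int_0^T g(s)\, dB(s) = \sum_{k=1}^\infty \int_0^T \langle g(s), v_k\rangle_{V_Q}\, d\beta_k(s) = \sum_{k=1}^\infty \int_0^T \sqrt{\lambda_k}\, \langle g(s), e_k\rangle_V\, d\beta_k(s).
\]
On the left, by the construction of the Da~Prato--Zabczyk integral through $J_0$ and the finite-trace $Q_1$-Wiener process in $V_1$, one has the representation
\[
\int_0^T \Phi_s^g\, dW(s) = \sum_{k=1}^\infty \int_0^T \Phi_s^g(\tilde e_k)\, d\beta_k(s) = \sum_{k=1}^\infty \int_0^T \sqrt{\lambda_k}\, \langle g(s), e_k\rangle_V\, d\beta_k(s),
\]
which is the same series; convergence of both partial sums in $L^2(\Omega;\R)$ is supplied by the $L^2$ isometry together with the norm identity above. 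The main obstacle to carry out cleanly is bookkeeping the factor $\sqrt{\lambda_k}$ while passing between the three bases $\{e_k\}\subset V$, $\{v_k\}\subset V_Q$, $\{\tilde e_k\}\subset V_0$, and checking that the series representation of the left-hand integral is independent of the auxiliary space $V_1$ used to realize $W$; the latter follows because any two such realizations induce the same sequence of scalar Brownian motions $\beta_k$ tested against the fixed basis $\{e_k\}$ of $V$.
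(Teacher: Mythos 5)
Your proof is correct. The paper does not actually prove this proposition---it is quoted from \cite[Proposition 3.9]{DQ11} without argument---and your verification (diagonalize $Q$, relate the three orthonormal systems $\{e_k\}\subset V$, $\{v_k\}\subset V_Q$, $\{\tilde e_k\}\subset V_0$, compute $\|\Phi_s^g\|_{L_2(V_0,\R)}=\|g(s)\|_{V_Q}$, and match the series expansions of the two integrals in the common scalar Brownian motions) is exactly the standard one from that reference; note also that your normalization $\beta_k = B_\cdot(e_k)/\sqrt{\lambda_k}=B_\cdot(v_k)$ is the correct choice, whereas the paper's preceding sentence writes $\beta_k(t)=B_t(e_k)$ without the factor, which yields standard Brownian motions only when $Q$ is the identity.
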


\begin{deff}\label{inf-def-1}
{\rm With $\Phi^g_s$ being defined as in Proposition \ref{inf-prop-1},
 define
$$
\int_0^t \left\langle g(s),dW(s)\right\rangle_{V}:=\int_0^T \Phi^g_sdW(s).
$$}
\end{deff}

\subsection{Random field approach}\label{sec-22}
In this section, we recall some definitions of space-time white noise and random field approach introduced by Walsh.
We  discuss these briefly only for our own purpose while the details can be found in \cite{Wal86}.

\begin{deff}{\rm
Let $(E, \mathcal E,\nu)$ be a $\sigma$-finite measure space. A white noise based on $\nu$ is a random set function $W$ on the set $A\in \mathcal E$ of finite $\nu$-measure such that
\begin{itemize}
\item W(A) is an $N(0,\nu(A))$ random variable,
\item if $A\cap B=\emptyset$, then $W(A)$ and $W(B)$ are independent and
$$
W(A\cap B)=W(A)+W(B).
$$
\end{itemize} }
\end{deff}

\begin{deff}{\rm
Let $E=\R_+^n$, $\nu$ be Lebesgue measure, and $W$ be a white noise on $E$. The Brownian sheet on $\R_+^n$ is the process $\{W_t:t\in\R_+^n\}$ defined by $W_t:=W((0,t])$, where
$t=(t_1,\dots,t_n)$,
$(0,t]:=(0,t_1]\times\dots\times(0,t_n]$.
That is a mean-zero Gaussian process. Moreover, if $s=(s_1,\dots,s_n)$ and $t=(t_1,\dots,t_n)$, the covariance function is
$$
\E(W_sW_t)=(s_1\wedge t_1)\dots(s_n\wedge t_n).
$$ }
\end{deff}

The integral in Walsh's sense is defined based on martingale measure theory, which
 is constructed as follows.

\begin{deff}{\rm
Let $U(A,\omega)$ be a (random) function defined on $\mathcal A\times \Omega$, where $\mathcal A\subset\mathcal E$ is an algebra and such that $\E (U^2(A))<\infty$, $\forall A\in \mathcal A$ and $U(A\cup B)=U(A)+U(B)$\a.s for all $A,B\in\mathcal A$, $A\cap B=\emptyset$.
We say that $U$ is $\sigma$-finite if there exists an increasing sequence $E_n\subset \mathcal E$ whose union is $E$ such that for all $n$
\begin{itemize}
\item $\mathcal E_n\subset \mathcal A$ where $\mathcal E_n:=\mathcal E_{|E_n}$,
\item $\sup \{\|U(A)\|_2: A\in\mathcal E_n\}<\infty$,
where
$
\|U(A)\|_2:=\E \left(U^2(A)\right)^{1/2}.
$
\end{itemize}
Moreover, if $U$ is countably additive on $\mathcal E_n$, $\forall n$, we can take an extension as follows. If $A\in \mathcal E$, $U(A)=\lim_{n\to\infty}U(A\cap E_n)$ if the limit does exist in $L^2$ (the space $L^2(\Omega,\F,\PP)$ endowed with the above norm) and $U(A)$ is not defined otherwise.
Such a $U$ is said to be a ``$\sigma$-finite $L^2$-valued measure".}
\end{deff}

\begin{deff} {\rm (\cite[Chapter 1]{Wal86})
Let $\F_t$ be a right continuous filtration. A process $\{M_t(A), \F_t, t\geq t, A\in \A\}$ is a martingale measure if
\begin{itemize}
\item $M_0(A)=0$,
\item if $t>0$, $M_t$ is a $\sigma$-finite $L^2$-valued measure, and
\item $\{M_t(A), \F_t, t\geq 0\}$ is a martingale.
\end{itemize} }
\end{deff}

\begin{deff}{\rm
A martingale measure $M$ is orthogonal if for any two disjoint sets $A$ and $B$, the martingales $\{M_t(A), \F_t, t\geq 1\}$ and $\{M_t(B),\F_t, t\geq 1\}$ are orthogonal.}
\end{deff}

Let $W$ be a white noise in $\R_+\times E$
and $M_t(A)=W([0,t]\times A)$. Then it is clear that $M_t(A)$ is a martingale measure. Moreover,  $M_t(A)$ and $M_t(B)$ are independent and orthogonal provided $A\cap B=\emptyset$.
It is also worthwhile to note that we can integrate over $dx$ for fixed $t$ as in
 the Bochner integral and integrate over $dt$ for fixed set $A$
as in the It\^o integral.
However, we wish to integrate over $dx$ and $dt$ together.
It is not possible to construct a stochastic integral with respect to all martingale measures.
Hence, the following class of martingale measures is defined.

\begin{deff} {\rm(\cite[Chapter 2]{Wal86})
The covariance function of $M$ is defined by
$$
\bar Q_t(A,B):=\langle M(A),M(B)\rangle_t.
$$
For a rectangle, i.e., the set $A\times B\times (s,t]\in\mathcal E\times \mathcal E\times \R_+$, define a set function $Q$ on rectangle by
$$
Q(A\times B\times (s,t]):=\bar Q_t(A,B)-\bar Q_s(A,B),
$$
and extend $Q$ by additivity to finite disjoint union of rectangles.}

\end{deff}

\begin{deff} {\rm (\cite[Chapter 2]{Wal86})
A martingale measure $M$ is ``worthy" if there exists a random $\sigma$-finite measure $K(\Lambda,\omega)$, $\Lambda\in \mathcal E\times\mathcal E\times \mathcal B$, where $\mathcal B$ consists of Borel sets on $\R_+$ such that
\begin{itemize}
\item $K$ is positive definite and symmetric in the first and the second variables,
\item for fixed $A,B\in \mathcal E$, $\{K(A\times B\times (0,t], t\geq 0)\}$ is predictable,
\item for all $n\in \N$,  $\E\left(K(E_n\times E_n\times [0,T])\right)<\infty$, where $E_n\in \mathcal E$,
\item for any rectangle $\Lambda$, $|Q(\Lambda)|\leq K(\Lambda)$.
\end{itemize}
We call $K$ the dominating measure of $M$.}
\end{deff}

Now, let $M$ be a worthy martingale measure on the Lusin space $(E,\mathcal E)$,
and
$Q_M$ and $K_M$ be its covariance and dominating measure, respectively.
The stochastic integral (in Walsh's sense) will be defined for the class of simple functions first.

\begin{deff}{\rm
A function $f$ is elementary if it is of the form
$$
f(s,x,\omega)=X(\omega)\1_{(a,b]}(s)\1_A(x),
$$
where $0\leq a\leq b$, $X$ is bounded and $\F$-measurable and $A\in\mathcal E$.
A function $f$ is simple if it is a finite sum of elementary functions.}
\end{deff}

\begin{deff}{\rm
The predictable $\sigma$-field $\mathcal P$ on $\Omega\times E\times \R_+$ is the $\sigma$-field generated by class of simple function.
A function is predictable if it is $\mathcal P$-measurable.

Let $\mathcal P_M$ be the class of all predictable functions $f$ such that $\|f\|_M<\infty$, where
$$
\|f\|_M:=\E\left((|f|,|f|)_K\right)^{1/2},
$$
and
$$
(f,g)_K:=\int_{E\times E\times \R_+}f(s,x)g(s,y)K(dxdyds).
$$ }
\end{deff}

\begin{prop}\label{prop-21} {\rm(\cite[Proposition 2.3]{Wal86})}
The class of simple function is dense in $\mathcal P_M$.
\end{prop}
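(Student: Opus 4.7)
The plan is to prove density by a two-step procedure: first approximate bounded predictable functions by simple ones via a monotone class argument, then extend to all of $\mathcal{P}_M$ by truncation. Since the norm $\|\cdot\|_M$ arises from the symmetric bilinear form $(\cdot,\cdot)_K$, convergence of an approximating sequence $f_n \to f$ in $\|\cdot\|_M$ will follow as soon as $f_n(s,x)f_n(s,y) \to f(s,x)f(s,y)$ in $L^1$ with respect to $\E \otimes K$ on $\Omega \times E \times E \times \R_+$.

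First I would fix the $\sigma$-finiteness issue by localizing on the sets $E_n$: since $\E(K(E_n \times E_n \times [0,T])) < \infty$ and $E = \bigcup E_n$, it suffices to approximate $f\cdot\1_{E_n \times [0,T]}$ for each $n,T$. Within that setting, let $\mathcal{H}$ denote the family of bounded predictable functions $f$ such that $f \cdot \1_{E_n \times [0,T]}$ lies in the $\|\cdot\|_M$-closure of the simple functions. By definition $\mathcal{H}$ contains all elementary functions $X(\omega)\1_{(a,b]}(s)\1_A(x)$ with $A \in \mathcal{E}_n$, and it is a vector space closed under uniform convergence. To invoke a monotone class theorem I would verify that $\mathcal{H}$ is closed under bounded monotone limits: if $f_k \uparrow f$ pointwise with $\sup_k\|f_k\|_\infty \le C$, then dominated convergence applied to the finite measure $\E \otimes K$ on $E_n \times E_n \times [0,T]$ gives $(|f_k - f|,|f_k - f|)_K \to 0$ in expectation. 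The monotone class theorem then yields all bounded $\mathcal{P}$-measurable functions supported in $E_n \times [0,T]$.

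Next I would handle a general $f \in \mathcal{P}_M$ by the truncation $f_{n,N}(s,x,\omega) := (f \wedge N) \vee (-N) \cdot \1_{E_n \times [0,T]}$, so that $|f_{n,N}| \le |f|$ and $f_{n,N} \to f$ pointwise as $n,N,T \to \infty$. By the first step each $f_{n,N}$ is a $\|\cdot\|_M$-limit of simple functions, and the dominated convergence theorem applied to $(|f_{n,N}-f|,|f_{n,N}-f|)_K \le 4(|f|,|f|)_K$ gives $\|f_{n,N} - f\|_M \to 0$, completing the density statement by a diagonal selection.

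The main obstacle will be the fact that $K(dx\,dy\,ds)$ is not a product measure in the spatial variables and is only signed-dominating in the sense $|Q| \le K$, so one cannot reduce $\|f\|_M^2$ to a single integral of $|f|^2$. The dominated convergence step therefore has to be run on the full space $\Omega \times E \times E \times \R_+$, which is why the positive definiteness and symmetry of $K$ in its first two variables, together with $\sigma$-finiteness on each $E_n \times E_n \times [0,T]$, are essential; these are exactly the properties built into the definition of a worthy martingale measure, so the argument closes.
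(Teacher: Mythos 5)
Your proof is correct and follows essentially the same route as the source the paper cites for this result (Walsh's Proposition 2.3): localize to $E_n\times[0,T]$ where $\E\otimes K$ is finite, run a functional monotone class argument starting from indicators of predictable rectangles with dominated convergence supplying closure under bounded pointwise limits, and finish by truncation using $(|f_{n,N}-f|,|f_{n,N}-f|)_K\le 4(|f|,|f|)_K$. The paper itself gives no proof beyond the citation, and your write-up correctly identifies the two points that make the argument work, namely the positivity and symmetry of the dominating measure $K$ and its $\sigma$-finiteness on the sets $E_n\times E_n\times[0,T]$.
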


For an elementary function $f(s,x,\omega)=X(\omega)\1_{(a,b]}(s)\1_A(x)$, the martingale measure $f\cdot M$ is defined by
$$
f\cdot M_t(B):=X(\omega)\left(M_{t\wedge b}(A\cap B)-M_{t\wedge a}(A\cap B)\right).
$$

\begin{prop} {\rm(\cite[Lemma 2.4]{Wal86})}
The martingale measure $f\cdot M$ is worthy and
$$
\E \left((f\cdot M_t(B))^2\right)\leq \|f\|_M^2,\;\forall B\in\mathcal E, t\leq T.
$$
\end{prop}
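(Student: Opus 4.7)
The goal is to show that if $f$ is (first) an elementary function and then (by linearity) a simple function, then $f\cdot M$ defines a worthy martingale measure on $\mathcal E$ satisfying the second-moment bound $\E[(f\cdot M_t(B))^2]\le\|f\|_M^2$. I would split the argument into three stages.

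\textbf{Stage 1 (elementary $f$).} Consider first $f(s,x,\omega)=X(\omega)\1_{(a,b]}(s)\1_A(x)$ with $X$ bounded and $\F_a$-measurable. The definition gives $f\cdot M_t(B)=X\bigl(M_{t\wedge b}(A\cap B)-M_{t\wedge a}(A\cap B)\bigr)$. I would check the three items in the definition of a martingale measure:  $f\cdot M_0(B)=0$ is immediate; $\sigma$-finiteness as an $L^2$-valued measure in $B$ follows from the corresponding property of $M$ together with boundedness of $X$, taking the same exhausting sequence $\{E_n\}$; and the martingale property in $t$ reduces to a case split at $t=a$ and $t=b$, using that $X$ is $\F_a$-measurable so it factors out of the conditional expectation and leaves the martingale $\{M_\cdot(A\cap B)\}$ inside.

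\textbf{Stage 2 (second-moment bound for elementary $f$).} Conditioning on $\F_a$ and using that the quadratic variation process of $M_\cdot(A\cap B)$ is $\langle M(A\cap B)\rangle_t=\bar Q_t(A\cap B,A\cap B)$, I get
\begin{equation*}
\E\bigl[(f\cdot M_t(B))^2\bigr]
=\E\bigl[X^2\,Q\bigl((A\cap B)\times(A\cap B)\times(t\wedge a,t\wedge b]\bigr)\bigr].
\end{equation*}
Now apply $|Q(\Lambda)|\le K_M(\Lambda)$ on rectangles and monotonicity $K_M((A\cap B)^2\times(t\wedge a,t\wedge b])\le K_M(A\times A\times(a,b])$ to bound this by $\E[X^2 K_M(A\times A\times(a,b])]$, which is exactly $\|f\|_M^2$ when computed from the inner product $(\cdot,\cdot)_K$.

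\textbf{Stage 3 (simple $f$ and worthiness).} For $f=\sum_{i=1}^n f_i$ with $f_i$ elementary, define $f\cdot M:=\sum_i f_i\cdot M$. Linearity makes it a martingale measure, and polarization of the Stage 2 identity produces the covariance
\begin{equation*}
\langle f\cdot M(B),f\cdot M(B')\rangle_t
=\int_{E\times E\times[0,t]} f(s,x)f(s,y)\,\1_B(x)\1_{B'}(y)\,K_M(dx\,dy\,ds),
\end{equation*}
so a natural candidate for the dominating measure of $f\cdot M$ is
\begin{equation*}
K_{f\cdot M}(dx\,dy\,ds):=|f(s,x)|\,|f(s,y)|\,K_M(dx\,dy\,ds),
\end{equation*}
which is predictable, symmetric, positive definite (as a weighted version of $K_M$), and $\sigma$-finite because $f$ is bounded on each elementary piece. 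The inequality $|Q_{f\cdot M}(\Lambda)|\le K_{f\cdot M}(\Lambda)$ on rectangles then follows from $|Q_M|\le K_M$. Finally, taking $B'=B$ and $t=T$ in the covariance identity yields $\E[(f\cdot M_t(B))^2]\le\|f\|_M^2$.

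\textbf{Main obstacle.} The delicate point is not the elementary case but the bookkeeping in Stage 3: when $f=\sum_i X_i\1_{(a_i,b_i]}\1_{A_i}$ has overlapping time intervals or overlapping sets $A_i$, the cross terms $f_i\cdot M\cdot f_j\cdot M$ must be controlled, and one has to verify that the candidate dominating measure really dominates $Q_{f\cdot M}$ on every rectangle, not only on product rectangles adapted to one summand. The cleanest way is to first refine the decomposition into disjoint $(a_k,b_k]\times A_k$ pieces (which one can always do for a finite sum) so that different summands are orthogonal both in time and in space, reducing all covariances to diagonal ones and collapsing the argument to Stages 1 and 2 applied component-wise.
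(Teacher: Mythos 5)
Your Stages 1 and 2 are correct and are essentially the standard argument for this lemma (the paper itself gives no proof, only the citation to Walsh, so there is nothing internal to compare against). The chain
$$
\E\bigl[(f\cdot M_t(B))^2\bigr]=\E\bigl[X^2\,Q\bigl((A\cap B)\times(A\cap B)\times(t\wedge a,t\wedge b]\bigr)\bigr]\le\E\bigl[X^2K\bigl(A\times A\times(a,b]\bigr)\bigr]=\|f\|_M^2
$$
is exactly right, and you correctly (and silently) repair the paper's definition of an elementary function: $X$ must be $\F_a$-measurable, not merely $\F$-measurable, or the martingale property in your Stage 1 fails.

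The one genuine slip is in Stage 3, and it concerns precisely the distinction the appendix sets up between \emph{worthy} and \emph{orthogonal} martingale measures. After refining the simple function into summands with pairwise disjoint supports $(a_k,b_k]\times A_k$, you assert the summands become ``orthogonal both in time and in space, reducing all covariances to diagonal ones.'' Disjointness in time does kill cross-brackets, but disjointness in space does not: for a general worthy $M$, $\langle M(A_i),M(A_j)\rangle_t=\bar Q_t(A_i,A_j)$ need not vanish when $A_i\cap A_j=\emptyset$ --- that is exactly what separates worthy from orthogonal martingale measures. So you cannot collapse the argument to ``Stages 1 and 2 applied component-wise.'' The inequality survives anyway, for a different reason: bound every cross term by its $K_M$-integral, $\abs{\E[f_i\cdot M_t(B)\,f_j\cdot M_t(B)]}\le\E[(\abs{f_i},\abs{f_j})_K]$, and then use that disjointness of supports gives $\sum_i\abs{f_i}=\abs{f}$ pointwise, so the double sum equals $\E[(\abs{f},\abs{f})_K]=\|f\|_M^2$ exactly; this is also where the disjoint refinement genuinely earns its keep, since with overlapping supports $\sum_i\abs{f_i}\ge\abs{f}$ and the same bound would overshoot. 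Since the proposition as stated in the paper --- read in context, immediately after the definition of $f\cdot M$ for elementary $f$ and before the extension by linearity --- concerns the elementary case only, your Stages 1 and 2 already prove it; Stage 3 is extra and needs only this repair.
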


Now, for simple function $f$, we can define $f\cdot M$ by linearity.
Since Proposition \ref{prop-21}, we are able to define $f\cdot M$ for all $f\in\mathcal P_M$ as usual.
Finally, we define the stochastic integral by
$$
\int_0^t \int_Af(s,x)M(ds,dx):=f\cdot M_t(A),
$$
and
$$
\int_0^t\int_E f(s,x)M(ds,dx):=f\cdot M_t(E).
$$

\subsection{Equivalence of the two approaches}\label{sec-23}
We proceed with the equivalence of the stochastic integrals by Da Prato and Zabczyk  (with respect to standard cylindrical $Q$-Wiener processes) and the stochastic integrals in Walsh's sense (with respect to space-time white noises or Brownian sheets associated to the cylindrical Wiener-processes).

Now, let us assume $Q$ is the identity operator on the space $V=L^2(U)$, with
$$
U:=\left\{x=(x_1,\dots,x_d)\in\R^d: 0\leq x_i\leq 1, i=1,\dots,d\right\},
$$
and $W$ is a standard cylindrical $Q$-Wiener process and $B_W(t)$ is the associated cylindrical Wiener process.
Moreover, we define
$$
B(t,x):=\sum_{k=1}^\infty \beta_k(t)\int_{R(x)}e_k(y)dy,
$$
where
$\{e_k\}_{k=1}^\infty$ is an orthonormal basis of $L^2(U)$,
$R(x)$ is the rectangle in $U$, i.e.,
$$
R(x):=\left\{a=(a_1,\dots,a_d)\in U: 0\leq a_i\leq x_i, i=1,\dots,d\right\}.
$$
Then, it is easy to verify that $B(\cdot,\cdot)$ is a Brownian sheet; see e.g., \cite[Section 4.3.3]{PZ92}.

Consider a real-valued stochastic process $\varphi(s,x)$, $s\in [0,T]$, $x\in U$ and assume that $\varphi(s,\cdot)$, $s\in [0,T]$ is an $L^2(U)$-valued predictable process and such that
$$
\E\left(\int_0^T\int_U \varphi^2(s,x)dsdx\right)=\E\left(\int_0^T\|\varphi(s,\cdot)\|_{L^2(U)}^2ds\right)<\infty.
$$
Then, one has
\begin{equation}\label{eq-equi-integral}
\int_0^T \int_U \varphi(s,x)B(ds,dx)=\int_0^T \langle \varphi(s,\cdot),dW(s,\cdot)\rangle_{L^2(U)}=\int_0^T \varphi(s)dB_W(s),
\end{equation}
where the first integral is the stochastic integral with respect to the Brownian sheet in Walsh's sense in Section \ref{sec-22}, the second is the stochastic integral with respect to the cylindrical $Q$-Wiener process in Da Prato's and Zabczyk's sense in Section \ref{sec-21} (see Definition \ref{inf-def-1} and Proposition \ref{inf-prop-1}) and the last one is the stochastic integral with respect to the cylindrical Wiener process in the sense of Section \ref{sec-21}.
To gain more insight, the reader is referred to \cite{DQ11,PZ92}.

\para{Solutions of the two approaches and their equivalence.}
Now,
we demonstrate that the solutions of stochastic heat equation in one dimension in these approaches are equivalent. Actually, this fact holds for
large classes of SPDEs in general (including stochastic heat equation and stochastic wave equation with dimension $\leq 3$).

Consider a class of non-linear SPDEs of the following form
\begin{equation}\label{23-eq1}
\frac{\partial u(t,x)}{\partial t}=Au(t,x)+b(u(t,x))+\sigma(u(t,x))\dot W(t,x),
\end{equation}
where $t>0$, $x\in\0\subset \R^d$, $A=\Delta$ together
with some boundary condition on $\0$ if $\0$ is a bounded domain (in fact, we can consider non-linear wave equation
with the assumptions
$d\leq 3$),
$b(\cdot)$ and $\sigma(\cdot)$ are continuous,
and $\dot W(t,x)$ is a space-time white noise.

Let $T_t(x,y)$ be a fundamental solution of the problem $\frac{\partial u(t,x)}{\partial t}=Au(t,x)$ and $e^{tA}$ be the analytic semi-group generated by $A$, alternatively defined by
\begin{equation*}
(e^{tA}u)(x):=\int_\0 T_t(x,y)u(y)dy.
\end{equation*}

\para{Mild random field solution.}  A mild random field solution
$\{u(t,x): (t,x)\in [0,T]\times \0\}$ of \eqref{23-eq1} is such that the following stochastic integral equation is satisfied
$$
\begin{aligned}
u(t,x)=&\int_\0 T_t(x,y)u_0(y)dy+\int_0^t\int_\0 T_{t-s}(x,y)b(u(s,y))dyds\\
&+\int_0^t\int_\0T_{t-s}(x,y)\sigma(u(s,y))W(ds,dy).
\end{aligned}
$$
In the above, the first and the second integrals are understood as usual and
the last one is the stochastic integral in Walsh's sense (that
 is, the two parameters in integration are taken at the same time).

\para{Mild $L^2(\0)$-valued solution.} A
mild $L^2(\0)$-valued solution
$\{u(t,\cdot): t\in [0,T]\}$, $u(t,\cdot)\in L^2(\0)$ of \eqref{23-eq1} is such that the following stochastic integral equation
is satisfied (in $L^2(\0)$)
$$
\begin{aligned}
u(t)=e^{tA}u_0+\int_0^t e^{(t-s)A}b(u(s))ds+\int_0^t e^{(t-s)A}\sigma(u(s))dW(s).
\end{aligned}
$$
In the above, the second integral is a Bochner integral while the last integral is a stochastic integral in the sense of infinite dimensional integration theory in Section \ref{sec-21} (with $\sigma(u(s))$ being understood as a multiplication operator).
To end this subsection, we state the following Proposition.

\begin{prop}
Consider $\0=(0,1)$, $A=\frac{\partial^2}{\partial x^2}$ endowed with homogeneous Neumann boundary condition. The mild random field solution and the mild $L^2(\0)$-valued solution are equivalent if one of them exists uniquely
and has continuous paths (in both space and time), i.e., $u(s,\cdot)\in C\big([0,t],C([0,1],\R)\big) \a.s$ and satisfies
\begin{equation}\label{eq-cond-eqi}
\sup_{[0,T]\times[0,1]}\E (|u(t,x)|^2)<\infty.
\end{equation}
The equivalence is in the sense that if we let $u(t,x)$ be the mild random field solution then $u(t):=u(t,\cdot)$ is the mild $L^2(\0)$-valued solution and vice versa.
\end{prop}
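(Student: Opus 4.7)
The plan is to pass between the two solution concepts term by term in the mild formulation, exploiting the kernel representation of the Neumann semigroup $e^{tA}$ and the identity \eqref{eq-equi-integral} linking Walsh's integral to the infinite-dimensional integral. Throughout I will assume one version, say the mild random field solution $u(t,x)$, exists, is continuous in $(t,x)$ almost surely, and satisfies \eqref{eq-cond-eqi}; I will then show that $t\mapsto u(t,\cdot)$ is a mild $L^2((0,1))$-valued solution. The reverse direction is symmetric once the stochastic-integral identification is established.

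First I would treat the deterministic pieces. The identity
\[
\big(e^{tA}u_0\big)(x)=\int_0^1 T_t(x,y)u_0(y)dy
\]
is the definition of $e^{tA}$, so the ``initial-data'' terms agree pointwise in $x$. For the drift term, since $b(u)$ is a.s.\ continuous on $[0,T]\times[0,1]$ (hence bounded), Fubini's theorem gives
\[
\int_0^t\!\!\int_0^1 T_{t-s}(x,y)b(u(s,y))dyds
=\Bigl(\int_0^t e^{(t-s)A}b(u(s,\cdot))ds\Bigr)(x)
\]
for each fixed $(t,x)$, and continuity of both sides in $x$ upgrades this to an identity in $L^2((0,1))$. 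The moment hypothesis \eqref{eq-cond-eqi} together with \eqref{prop-A2} ensures that all integrals are finite.

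The key step is the stochastic convolution. For fixed $t\in[0,T]$ and fixed $x\in[0,1]$, I would apply \eqref{eq-equi-integral} with the test process
\[
\varphi(s,y):=T_{t-s}(x,y)\sigma(u(s,y))\mathbf{1}_{[0,t]}(s),
\]
which is predictable (by continuity of $u$), and which satisfies the integrability requirement
\[
\E\int_0^t\!\!\int_0^1 T_{t-s}^2(x,y)\sigma^2(u(s,y))dyds<\infty
\]
thanks to \eqref{eq-cond-eqi}, the continuity of $\sigma$, and the estimate \eqref{prop-A2} applied with $q=2<3$. Then \eqref{eq-equi-integral} identifies
\[
\int_0^t\!\!\int_0^1 T_{t-s}(x,y)\sigma(u(s,y))W(ds,dy)
=\int_0^t \langle T_{t-s}(x,\cdot)\sigma(u(s,\cdot)),dW(s,\cdot)\rangle_{L^2((0,1))}.
\]
The right-hand side is, by the very definition of the infinite-dimensional stochastic integral with $\sigma(u(s))$ viewed as a multiplication operator, the pointwise evaluation at $x$ of $\int_0^t e^{(t-s)A}\sigma(u(s))dW(s)$. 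Combining the three agreeing terms yields the mild $L^2$-valued equation in the $x$-pointwise sense; a continuity/density argument promotes it to an identity in $L^2((0,1))$.

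The main obstacle, and the place where some care is needed, is the stochastic Fubini-type identification above: the integrand involves the kernel $T_{t-s}(x,y)$ evaluated at the terminal time $t$, so the process $\varphi(\cdot,\cdot)$ depends on the upper limit of integration. To justify \eqref{eq-equi-integral} cleanly I would either appeal directly to the Walsh–Da Prato–Zabczyk equivalence recalled in Section \ref{sec-23} (which is stated precisely under the integrability condition I verified), or approximate $\sigma(u(s,y))$ by simple predictable processes, identify the two integrals on simple functions where both reduce to finite linear combinations of Brownian-sheet increments, and pass to the limit in $L^2(\Omega)$ using the isometries on both sides together with the kernel bound \eqref{prop-A2} and \eqref{eq-cond-eqi}. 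The reverse implication (mild $L^2$-valued $\Rightarrow$ mild random field) is obtained by reading the same chain of identities from right to left, using the continuous version of the stochastic convolution produced by a factorization argument as in Proposition \ref{s3-prop-1} to make sense of the pointwise evaluation $u(t,x)$.
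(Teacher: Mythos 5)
Your argument is correct and takes essentially the same route as the paper: the paper gives no detailed proof, stating only that the Proposition follows from the equivalence of the two stochastic integrals in \eqref{eq-equi-integral} applied to the mild formulation and deferring the details to \cite[Proposition 4.9]{DQ11}. Your term-by-term identification (kernel representation of $e^{tA}$ for the deterministic parts, \eqref{eq-equi-integral} for the stochastic convolution at fixed $(t,x)$, with integrability checked via \eqref{eq-cond-eqi} and \eqref{prop-A2}) is precisely that argument written out.
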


The above Proposition follows the equivalence of stochastic integrals in random field approach and in infinite-dimensional approach (as in \eqref{eq-equi-integral}).
The condition ``has continuous paths" and \eqref{eq-cond-eqi} may be a bit restrictive.
In fact, we imposed this condition to prove the equivalence without much effort.
In certain cases, this condition may not be needed and one can verify directly that the ``mild random field solution" is equivalent to the ``mild $L^2(\0)$-valued solution".
For the details of
the proof of  this Proposition,
the reader is referred to \cite[Proposition 4.9]{DQ11}.

\subsection{Malliavin calculus}\label{sec:Amal}
We describe briefly the Malliavin calculus in this section for our own
purpose, and refer to \cite{Nua06} for a complete presentation of this subject.
Denote by $\mathcal S$ the space of smooth random variables such that for $F\in\mathcal S$, $F$ has the form
$$
F=f(W(h_1),\dots,W(h_n)),
$$
where $f\in C_b^\infty(\R^n)$,
 and $h_1,\dots,h_n$ is an orthonormal sequence in $L^2(\R_+\times(0,1))$, and $W(t,x)$ is a Brownian sheet\footnote{For sake of simplicity of notation, in this section, we can assume the Brownian sheet is the canonical process},
 and for $h\in L^2(\R_+\times(0,1))$,
$$
W(h):=\int \int h(s,y)W(ds,dy).
$$
For $F\in \mathcal S$, the first-order Mallliavin derivative $DF$ is defined to be the $L^2(\R_+\times(0,1))$-valued random variable as follows
$$
D_{t,x}F
:=\sum_{k=1}^n \partial_kf(W(h_1),\dots,W(h_n))h_k(t,x).
$$
Let $\D^{1,2}$ be the completion of $\mathcal S$ with respect to the semi-norm
$$
\|F\|_{1,2}^2:=\E|F|^2+\E|DF|_{L^2(\R_+\times (0,1))}^2.
$$
Moreover, for each $h\in L^2(\R_+\times(0,1))$, we define $D_hF$ (in fact, it is also the directional derivative) by
$$
D_hF:=\sum_{k=1}^\infty \langle h_k,DF\rangle_{L^2(\R_+\times(0,1))}\langle h_k,h\rangle_{L^2(\R_+\times(0,1))}=\langle DF,h\rangle_{L^2(\R_+\times(0,1))}.
$$
The operator $D_h$ can be extended as a closed operator with domain $\D^h$ and $\D^{1,2}\subset\D^h$.
In addition, one has
$$
D_{t,x}F=\sum_{k=1}^\infty \langle DF,h_k\rangle_{L^2(\R_+\times(0,1))}h_k(t,x)=\sum_{k=1}^\infty D_{h_k}Fh_k(t,x)
$$
(if one of them exists).
Furthermore, $F\in\D^{1,2}$ if and only if $F\in\D^{h_k}$ for each $k$ and
$$
\sum_{k=1}^\infty \E \left(|D_{h_k}F|^2\right)<\infty.
$$

Since our system has non-Lipschitz and unbounded coefficients,
we need to localize the system.
The ``local" criteria for absolute continuity of the law of a random variable is stated as follow.
\begin{deff}
{\rm (see \cite[Definition 2.1]{PT93})
A random variable $F$ is said to belong to the class $\D^{1,2}_\lo$ if there exists a sequence of measurable subsets of $\Omega$:  $\Omega_n\subset \Omega_{n+1}$ and $\cup_{n}\Omega_n=\Omega\a.s$ and a sequence $\{F_n\}\subset \D^{1,2}$ such that
$$
F|_{\Omega_n}=F_n|_{\Omega_n}\forall n.
$$
We say that $F$ is localized by the sequence $\{(\Omega_n,F_n), n\in\N\}$.}
\end{deff}

\begin{prop}\label{den-prop1}{\rm(\cite[Proposition 2.2]{PT93})}
Let $F\in \D^{1,2}_\lo$. There exists a unique measurable function of $(t,x,\omega)$ $DF$ such that for any localizing sequence $(\Omega_n,F_n)$,
$$
\1_{\Omega_n}DF=\1_{\Omega_n}DF_n.
$$
\end{prop}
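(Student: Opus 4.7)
The plan is to construct $DF$ by gluing together the derivatives $DF_n$ on the localizing sets $\Omega_n$, with consistency ensured by the fundamental local property of the Malliavin derivative on $\D^{1,2}$. The whole argument hinges on one fact that I will establish first, after which the existence/uniqueness assertion is essentially a bookkeeping exercise.

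The key lemma to prove is the following local property: if $G\in\D^{1,2}$ and $A\in\F$ is a measurable set on which $G=0$ a.s., then $DG=0$ in $L^2(\R_+\times(0,1))$ almost surely on $A$; equivalently, $\1_A DG=0$ as an element of $L^2(\Omega\times\R_+\times(0,1))$. This is the main obstacle and also the only nontrivial ingredient. I would derive it from the chain rule for Lipschitz transformations: pick $\psi_\eps\in C^1(\R)$ with $\psi_\eps(0)=0$, $|\psi_\eps'|\le 1$, $\psi_\eps(x)\to x$ and $\psi_\eps'(x)\to\1_{x\neq 0}$ as $\eps\downarrow 0$. Then $\psi_\eps(G)\in\D^{1,2}$ with $D\psi_\eps(G)=\psi_\eps'(G)DG$. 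Passing to the limit and noting that $\psi_\eps(G)=0$ on $A$ forces $\psi_\eps'(G)DG$ to vanish on $A$ as well, whence $DG=0$ on $A\cap\{G=0\}=A$.

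Once the local property is in hand, consistency is immediate. Let $\{(\Omega_n,F_n)\}$ and $\{(\Omega_m',F_m')\}$ be two localizing sequences for $F$. For each fixed $n,m$ the random variable $G_{n,m}:=F_n-F_m'$ lies in $\D^{1,2}$ and vanishes on $\Omega_n\cap\Omega_m'$ because $F_n=F=F_m'$ there. Applying the local property with $A=\Omega_n\cap\Omega_m'$ yields
\begin{equation*}
\1_{\Omega_n\cap\Omega_m'}DF_n=\1_{\Omega_n\cap\Omega_m'}DF_m'\qquad\text{a.e. on }\R_+\times(0,1)\times\Omega.
\end{equation*}
Specializing to the case when the two sequences coincide, this shows in particular that $\1_{\Omega_n\cap\Omega_m}DF_n=\1_{\Omega_n\cap\Omega_m}DF_m$ for all $n,m$.

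For existence, define $DF(t,x,\omega):=DF_n(t,x,\omega)$ for $\omega\in\Omega_n$. The previous consistency on $\Omega_n\cap\Omega_m$ shows that this definition is unambiguous, and since $\cup_n\Omega_n=\Omega$ a.s., the function $DF$ is defined almost everywhere on $\R_+\times(0,1)\times\Omega$ and is clearly jointly measurable as the pointwise limit of $\1_{\Omega_n}DF_n$. By construction it satisfies $\1_{\Omega_n}DF=\1_{\Omega_n}DF_n$ for every $n$. For uniqueness, suppose $\widetilde{DF}$ is another measurable function with $\1_{\Omega_n'}\widetilde{DF}=\1_{\Omega_n'}DF_n'$ for some localizing sequence $\{(\Omega_n',F_n')\}$; combining this with the consistency step above and the identity $\1_{\Omega_n\cap\Omega_m'}DF_n=\1_{\Omega_n\cap\Omega_m'}DF_m'$ gives $\1_{\Omega_n\cap\Omega_m'}DF=\1_{\Omega_n\cap\Omega_m'}\widetilde{DF}$ for all $n,m$, and letting $n,m\to\infty$ yields $DF=\widetilde{DF}$ a.e. Thus the only real work is the local property; everything else is automatic.
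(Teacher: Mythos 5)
The paper does not prove this proposition at all: it is quoted verbatim from Pardoux and Zhang \cite[Proposition 2.2]{PT93} as background for Section \ref{sec:den}, so there is no in-paper argument to compare against. Your overall strategy --- first establish the local property of $D$ on $\D^{1,2}$, then deduce consistency of $DF_n$ and $DF_m'$ on overlaps, then glue --- is the standard route (it is how \cite{PT93} and Nualart treat localization), and your consistency, gluing, and uniqueness steps are all correct as written.

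The one place the write-up does not hold together is the justification of the key lemma itself, which you rightly identify as the only nontrivial ingredient. The sentence ``noting that $\psi_\eps(G)=0$ on $A$ forces $\psi_\eps'(G)DG$ to vanish on $A$'' is circular as stated: inferring that the Malliavin derivative of a random variable vanishes on a set from the fact that the random variable vanishes there is exactly the local property you are trying to prove, and for fixed $\eps$ the quantity $\psi_\eps'(G)DG$ need not vanish on $A$ anyway, since your hypotheses only give $\psi_\eps'(0)\to 0$, not $\psi_\eps'(0)=0$. The correct mechanism, which your setup already supports, is the following. From $\psi_\eps(0)=0$ and $|\psi_\eps'|\le 1$ one gets $|\psi_\eps(x)|\le|x|$, so $\psi_\eps(G)\to G$ in $L^2(\Omega)$ by dominated convergence; the chain rule (applicable because $\psi_\eps$ is $C^1$ with bounded derivative) gives $D\psi_\eps(G)=\psi_\eps'(G)DG$, and $|\psi_\eps'|\le 1$ together with $\psi_\eps'(x)\to\1_{\{x\neq 0\}}$ gives $D\psi_\eps(G)\to\1_{\{G\neq 0\}}DG$ in $L^2\big(\Omega;L^2(\R_+\times(0,1))\big)$, again by dominated convergence. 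The closedness of the operator $D$ then identifies the limit: $DG=\1_{\{G\neq 0\}}DG$, i.e.\ $DG=0$ almost everywhere on $\{G=0\}\supseteq A$. It is this closedness of $D$, nowhere invoked in your argument, that carries the weight of the lemma; once it is inserted, the remainder of your proof goes through.
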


\begin{prop}\label{den-prop2}
{\rm(\cite[Proposition 2.3]{PT93})}
Let $F$ be a real random variable.
A sufficient condition for the law of $F$ to be absolutely continuous with respect to the Lebesgue measure is that
\begin{itemize}
\item [\rm{(i)}]$F\in\D^{1,2}_\lo$
\item [\rm{(ii)}] $\|DF\|_{L^2(\R_+\times(0,1))}>0\a.s$
\end{itemize}
\end{prop}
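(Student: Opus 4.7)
The plan is to deduce this localized absolute continuity criterion from the classical Bouleau--Hirsch criterion applied on each piece of the localizing sequence. Write $H := L^2(\R_+\times (0,1))$ for brevity. The core ingredient I would first establish (or invoke) is the standard Bouleau--Hirsch statement: if $G \in \D^{1,2}$ and $\|DG\|_H > 0$ a.s., then the law of $G$ is absolutely continuous with respect to Lebesgue measure; more precisely, the sub-probability measure $A \mapsto \PP(G \in A,\ \|DG\|_H > 0)$ is absolutely continuous. The standard route for this is via Malliavin duality: for $\psi \in C_c^\infty(\R)$, the chain rule gives $D\psi(G) = \psi'(G)\, DG$, whence on $\{\|DG\|_H > 0\}$ one can write
\[
\psi'(G) = \Big\langle D\psi(G),\ \frac{DG}{\|DG\|_H^2} \Big\rangle_H,
\]
and the Skorohod integration-by-parts formula yields $|\E[\psi'(G)\1_{\{\|DG\|_H>0\}}]| \le C \|\psi\|_\infty$, which, after a truncation handling the non-integrability of $\|DG\|_H^{-2}$, gives the claimed absolute continuity.

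Second, I would perform the localization step. Let $(\Omega_n, F_n)$ be a localizing sequence for $F$, so that $\Omega_n \nearrow \Omega$ a.s. and $F = F_n$ on $\Omega_n$ with $F_n \in \D^{1,2}$. By Proposition \ref{den-prop1}, $DF = DF_n$ on $\Omega_n$, and hypothesis (ii) then forces $\|DF_n\|_H > 0$ a.s. on $\Omega_n$. For a Borel null set $A \subset \R$,
\[
\PP(F \in A) = \lim_{n\to\infty}\PP(F \in A,\ \Omega_n) = \lim_{n\to\infty}\PP(F_n \in A,\ \Omega_n),
\]
and since $\Omega_n \subset \{\|DF_n\|_H > 0\}$ up to a null set,
\[
\PP(F_n \in A,\ \Omega_n) \le \PP(F_n \in A,\ \|DF_n\|_H > 0) = 0
\]
by the refined Bouleau--Hirsch criterion. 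Hence $\PP(F \in A) = 0$, which is exactly absolute continuity of the law of $F$.

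The main obstacle is the Bouleau--Hirsch criterion itself rather than the localization: making the duality argument rigorous requires handling the potentially exploding factor $\|DF_n\|_H^{-2}$ via a further truncation (for example, replacing it by $(\|DF_n\|_H^2 + \delta)^{-1}$ and letting $\delta \downarrow 0$) and verifying that the resulting Skorohod integrand lies in the domain of $\delta$. Once this delicate step is carried out, the passage from $\D^{1,2}$ to $\D^{1,2}_\lo$ via the localizing sequence is essentially bookkeeping, using the consistency property $DF\1_{\Omega_n} = DF_n \1_{\Omega_n}$ from Proposition \ref{den-prop1} together with countable subadditivity.
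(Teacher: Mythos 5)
The paper does not prove this proposition at all: it is quoted verbatim from Pardoux--Zhang \cite[Proposition 2.3]{PT93}, so there is no in-paper argument to compare against. Your localization step is correct and is precisely the reduction used in that reference: from Proposition \ref{den-prop1} you get $\1_{\Omega_n}DF=\1_{\Omega_n}DF_n$, hence $\Omega_n\subset\{\|DF_n\|_{L^2(\R_+\times(0,1))}>0\}$ up to a null set, and the refined Bouleau--Hirsch criterion applied to each $F_n\in\D^{1,2}$ kills $\PP(F_n\in A,\Omega_n)$ for Lebesgue-null $A$; monotone convergence along $\Omega_n\nearrow\Omega$ finishes it. The one soft spot is your sketch of the Bouleau--Hirsch ingredient itself: for $G$ merely in $\D^{1,2}$ the duality argument does not go through even after replacing $\|DG\|_H^{-2}$ by $(\|DG\|_H^2+\delta)^{-1}$, because the resulting integrand $DG\,(\|DG\|_H^2+\delta)^{-1}$, while bounded in $H$, need not lie in the domain of the divergence operator (that would require control of second Malliavin derivatives of $G$). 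The actual proof of the criterion for $\D^{1,2}$ (or $\D^{1,1}_{\lo}$) variables proceeds differently, via approximation by smooth finitely based functionals and a co-area/Sard-type argument; since this is a classical theorem (\cite[Theorem 2.1.2]{Nua06}), you should simply cite it rather than re-derive it. With that substitution your proof is complete and matches the route of the cited source.
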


To close this section, we state the following chain rule, which is used
in  Section \ref{sec:den}.

\begin{prop} {\rm(\cite[Proposition 1.2.2]{Nua06})}
Let $\varphi:\R^n\to\R$ be a continuously differentiable function with bounded partial derivative and fixed $p\geq 1$. Suppose that $F=(F_1,\dots,F_n)$ is a random vector, whose components are in $\D^{1,p}$. Then, $\varphi(F)\in\D^{1,p}$ and
$$
D\varphi(F)=\sum_{i=1}^m\frac{\partial\varphi}{\partial x_i}(F)DF_i.
$$
\end{prop}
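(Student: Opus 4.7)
The plan is to proceed by approximation from the class $\mathcal{S}$ of smooth random variables, exploiting the boundedness of the partial derivatives of $\varphi$ together with the closability of the Malliavin derivative operator.

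First I would verify the identity for vectors $F=(F_1,\dots,F_n)$ whose components are all smooth random variables. By replacing the generating orthonormal systems of the individual $F_i$ by their union (orthonormalized), we can write each $F_i=f_i(W(h_1),\dots,W(h_N))$ for a common system $\{h_1,\dots,h_N\}$ and some $f_i\in C_b^\infty(\R^N)$. Then $\varphi(F)=g(W(h_1),\dots,W(h_N))$ with $g=\varphi\circ(f_1,\dots,f_n)$, which is $C^1$ with bounded partial derivatives; a mollification of $g$ together with a cut-off (passing to the limit in $L^p$) places $\varphi(F)$ in $\mathcal S$ after approximation, and in any event the definition of the Malliavin derivative combined with the classical chain rule gives
\[
D_{t,x}\varphi(F)=\sum_{k=1}^N \partial_k g(W(h_1),\dots,W(h_N))h_k(t,x)=\sum_{i=1}^n \partial_i\varphi(F)\,D_{t,x}F_i.
\]

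Next I would extend to $F_i\in\mathbb{D}^{1,p}$. Pick sequences $F_i^{(m)}\in\mathcal S$ with $F_i^{(m)}\to F_i$ in $\mathbb D^{1,p}$. Since $\varphi$ has bounded partial derivatives it is globally Lipschitz, so $\varphi(F^{(m)})\to\varphi(F)$ in $L^p(\Omega)$. By the smooth case,
\[
D\varphi(F^{(m)})=\sum_{i=1}^n \partial_i\varphi(F^{(m)})\,DF_i^{(m)}.
\]
Passing to a subsequence, $F^{(m)}\to F$ a.s.\ and, by continuity of $\partial_i\varphi$, $\partial_i\varphi(F^{(m)})\to\partial_i\varphi(F)$ a.s. Since $|\partial_i\varphi|$ is uniformly bounded by some constant $C$, dominated convergence combined with the convergence $DF_i^{(m)}\to DF_i$ in $L^p(\Omega;L^2(\R_+\times(0,1)))$ yields
\[
\sum_{i=1}^n \partial_i\varphi(F^{(m)})\,DF_i^{(m)}\;\longrightarrow\;\sum_{i=1}^n \partial_i\varphi(F)\,DF_i
\]
in $L^p(\Omega;L^2(\R_+\times(0,1)))$. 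The closability of $D$ (a standard fact obtained from integration by parts against exponential martingales) then forces $\varphi(F)\in\mathbb D^{1,p}$ with the asserted derivative.

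The main obstacle is the step controlling $\partial_i\varphi(F^{(m)})\,DF_i^{(m)}\to\partial_i\varphi(F)\,DF_i$ in $L^p(\Omega;L^2)$: the derivative $\partial_i\varphi$ is only continuous (not uniformly continuous), so one cannot simply bound the difference pointwise in $\omega$ by the $\mathbb D^{1,p}$ norm of $F^{(m)}-F$. The clean way around this is to decompose
\[
\partial_i\varphi(F^{(m)})DF_i^{(m)}-\partial_i\varphi(F)DF_i=\bigl[\partial_i\varphi(F^{(m)})-\partial_i\varphi(F)\bigr]DF_i^{(m)}+\partial_i\varphi(F)\bigl[DF_i^{(m)}-DF_i\bigr],
\]
handle the second term with the uniform bound $|\partial_i\varphi|\le C$, and handle the first term by Vitali's convergence theorem: pointwise a.s.\ convergence of the bracket factor together with $L^p$-uniform integrability of $\|DF_i^{(m)}\|_{L^2}^p$ (a consequence of $\mathbb D^{1,p}$-convergence) gives $L^p$-convergence to zero. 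Once this is in hand the closability argument finishes the proof.
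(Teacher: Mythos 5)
This proposition is quoted in the paper's appendix directly from \cite[Proposition 1.2.2]{Nua06} with no proof supplied, so there is no in-paper argument to compare against; your proof is the standard one from Nualart's book (establish the formula for smooth random vectors via the classical chain rule and mollification of $\varphi$, then pass to general $F\in(\D^{1,p})^n$ using the global Lipschitz bound, the two-term decomposition of the product, and the closedness of $D$), and it is correct. The only cosmetic remark is that the upper limit $m$ in the displayed sum should be $n$, a typo already present in the paper's statement.
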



\begin{thebibliography}{99}

{\small
\setlength{\baselineskip}{0.01in}
\parskip=-1pt



\bibitem{BP98} V. Bally, E. Pardoux, Malliavin calculus for white noise driven Parabolic SPDEs, {\it Potential Analysis}, {\bf 9} (1998), 27--64.

\bibitem{Bas98} R.
Bass,
{\it Diffusions and Elliptic Operators}, Springer-Verlag, 1998.

\bibitem{Ben14} M. Bena\" im, Stochastic persistence, (2018) preprint.



\bibitem{BCS04}
K. Burdzy,
Z.-Q.
Chen,
J. Sylvester, The heat equation and reflected Brownian motion in time-dependent domains, {\it Ann. Probab.}, {\bf 32} (2004),
775--804.

\bibitem{CLL17}
R. Cui,
Y-K. Lam,
Y. Lou,
Dynamics and asymptotic profiles of steady states of an epidemic model in advective environments, {\it J. Differential Equations}, {\bf 263} (2017),
2343--2373.


\bibitem{Cer03} S. Cerrai, Stochastic reaction-diffusion systems with multiplicative noise and non-Lipschitz reaction term, {\it Probab. Theory Relat. Fields}, {\bf 125} (2003), 271--304.

\bibitem{CE89}
P. Chesson, S. Ellner, Invasibility and stochastic boundedness in monotonic
competition models, {\it J. Math. Biol.}, {\bf 27} (1989), 117--138.

\bibitem{CG95} A. Chojnowska-Michalik, B. Goldys, Existence, uniqueness and invariant measures for
stochastic semilinear equations on Hilbert spaces, {\it Probability Theory Relat. Fields}, {\bf 102} (1995),
331--356.

\bibitem{CL84} C. Cosner,
A. Lazer,
Stable coexistence states in the Volterra-Lotka competition model
with diffusion, {\it SIAM J. Appl. Math.}, {\bf 44} (1984), 1112--1132.



\bibitem{Utah09} R. Dalang, D. Khoshnevisan, C. Mueller, D. Nualart, Y. Xiao, {\it A Minicourse on Stochastic Partial Differential Equations} (Salt Lake City 2006;
D. Khoshnevisan and F. Rassoul-Agha, eds), Vol. 1962 of Lecture Notes in
Mathematics, Springer, 2009.

\bibitem{DQ11} R. C. Dalang,
L. Quer-Sardanyons,
Stochastic integrals for spde's: a comparison, {\it Expo. Math.}, {\bf 29} (2011),
67--109.

\bibitem{DDN19} N. T. Dieu, N. H. Du, N. N. Nguyen, Conditions for Permanence and Ergodicity of Certain SIR Epidemic Models, {\it Acta Appl. Math.}, {\bf 160} (2019), 81--99.


\bibitem{DD11} N. H. Du,
D. H. Nguyen,
Dynamics of Kolmogorov systems of competitive type under the telegraph
noise, {\it J. Diffential Equations}, {\bf 250} (2011), 386--409.



\bibitem{DDY14} N. H. Du,  D. H. Nguyen, G. Yin, Existence of stationary distributions for Kolmogorov systems of
competitive type under telegraph noise, {\it J. Differential Equations}, {\bf 257} (2014), 2078--2101.

\bibitem{DN17} N. H. Du, N. N. Nguyen, Permanence and extinction of certain stochastic SIR models perturbed by a complex type of noises, {\it Appl. Math. Lett.}, {\bf 64} (2017), 223--230.



\bibitem{DN18}
N. H. Du, N. N. Nguyen, Permanence and extinction for the stochastic SIR epidemic model, {\it J. Differential Equations}, {\bf 269} (2020), 9619--9652.



\bibitem{PJR19} G. Da Prato,
      A. Jentzen,
M. R\"ockner,
A mild Itô formula for SPDEs, {\it Trans. Amer. Math. Soc.}, {\bf 372} (2019),
3755--3807.


\bibitem{PZ92}
G. Da Prato, J. Zabczyk, {\it Stochastic Equations in Infinite Dimensions}, Cambridge
Univ. Press, Cambridge, 1992.

\bibitem{PZ96} G. Da Prato, J. Zabczyk, {\it Ergodicity for Infinite Dimensional Systems}, London Mathematical
Society, Lecture Notes Series 229, Cambridge: Cambridge University Press,
1996.



\bibitem{GL94} C. Gui,
Y, Lou,
Uniqueness and nonuniqueness of coexistence states in the Lotka-Volterra competition model, {\it Comm. Pure Appl. Math.}, {\bf 47} (1994),
1571--1594.


\bibitem{HN13}  X. He,
W-M. Ni,
The effects of diffusion and spatial variation in Lotka-Volterra competition-diffusion system I: Heterogeneity vs. homogeneity, {\it J. Differential Equations}, {\bf 254} (2013),
528--546.

\bibitem{HN13-2}
 X. He, W-M. Ni, The effects of diffusion and spatial variation in Lotka-Volterra competition-diffusion system II: The general case, {\it J. Differential Equations}, {\bf 254} (2013),
4088--4108.

\bibitem{HN16}
 X. He, W-M. Ni, Global dynamics of the Lotka-Volterra competition-diffusion system: diffusion and spatial heterogeneity I, {\it Comm. Pure Appl. Math.}, {\bf 69} (2016),
981--1014.

\bibitem{HN16-2}
 X. He, W-M. Ni, Global dynamics of the Lotka-Volterra competition-diffusion system with equal amount of total resources, II, {\it Calc. Var. Partial Differential Equations}, {\bf 55} (2016),
no. 2,
Art. 25, 20 pp.

\bibitem{HN17}
 X. He, W-M. Ni, Global dynamics of the Lotka-Volterra competition-diffusion system with equal amount of total resources, III, {\it Calc. Var. Partial Differential Equations}, {\bf 56} (2017), no. 5, Art. 132, 26 pp.

\bibitem{HN18}
A. Hening, D. Nguyen, Coexistence and extinction for stochastic Kolmogorov systems, {\it Ann. Appl. Probab.}, {\bf 28} (2018), 1893--1942.



\bibitem{HS98}
J. Hofbauer, K. Sigmund, {\it Evolutionary Games and Population Dynamics}, Cambridge Univ.
Press, 1998.

\bibitem{HLM02} V. Hutson, Y. Lou, K. Mischaikow, Spatial heterogeneity of resources versus Lotka-Volterra dynamics, {\it J. Differential Equations}, {\bf 185} (2002), 97--136.

\bibitem{HLM05} V. Hutson, Y. Lou, C. Mischaikow, Convergence in competition models with small diffusion coefficients, {\it J. Differential Equations}, {\bf 211} (2005), 135--161.


\bibitem{Kes88} L.E.
Keshet, {\it Mathematical Models in Biology}, Random House, New York, 1988



\bibitem{Kho16} D. Khoshnevisan, {\it An Introduction to Parabolic SPDEs}, Lectures in Probability and Stochastic Processes, (2016), http://www.math.utah.edu/~davar/ps-pdf-files/LPSXI-Khosh.pdf.





\bibitem{LN12} K-Y. Lam,
W-M. Ni,
Uniqueness and complete dynamics in heterogeneous competition-diffusion systems, {\it SIAM J. Appl. Math.}, {\bf 72} (2012),
1695--1712.



\bibitem{LS84}
P. Lions, A. Sznitman,
Stochastic differential equations with reflecting boundary conditions, {\it Comm. Pure Appl. Math.}, {\bf 53} (1984), 644--668.

\bibitem{Lot25} A. Lotka, {\it Elements of Physical Biology}, Williams and Wilkins, Baltimore, 1925.

\bibitem{LZZ19} Y. Lou, X-Q. Zhao, P. Zhou, Global dynamics of a Lotka-Volterra competition-diffusion-advection system in heterogeneous environments, {\it J. Math. Pures Appl.}, {\bf (9) 121} (2019), 47--82.



\bibitem{MN08} C. Mueller, D. Nualart, Regularity of the density for the stochastic heat equation, {\it Electron. J. Probab.}, {\bf 13} (2008), 2248-2258.


\bibitem{Mur02}
J. Murray,
{\it Mathematical Biology}, Springer-Verlag Berlin Heidelberg 2002.

\bibitem{DY17}
D. Nguyen, G. Yin, Coexistence and exclusion of stochastic competitive Lotka-Volterra models, {\it J. Differential Equations}, {\bf 262} (2017),
1192--1225.

\bibitem{NNY18} D. H. Nguyen, N. N. Nguyen, G. Yin, Analysis of A Spatially Inhomogeneous Stochastic
Partial Differential Equation Epidemic Model, {\it  J. Appl. Probab.}, {\bf 57} (2020), 613--636.

\bibitem{NNY19}
D. H. Nguyen, N. N. Nguyen, G. Yin, General nonlinear stochastic systems motivated by
chemostat models: Complete characterization of
long-time behavior, optimal controls, and
applications to wastewater treatment, {\it Stochastic Process. Appl.}, {\bf 130} (2020), 4608--4642.

\bibitem{NNY19-2} D. H. Nguyen, N. N. Nguyen, G. Yin, Stochastic functional Kolmogorov equation I: Persistence, submitted, (2021).

\bibitem{NNY19-3} D. H. Nguyen, N. N. Nguyen, G. Yin, Stochastic functional Kolmogorov equation II: Extinction, preprint, (2021).

\bibitem{NYZ20}
D.
Nguyen, G.  Yin, and C. Zhu,
Long-term analysis of a stochastic SIRS model with general incidence rates, {\sl SIAM J. Appl. Math.}, {\bf 80} (2020), 814--838.

\bibitem{NY19-cosa} N. N. Nguyen, G. Yin, Stochastic Partial Differential Equation SIS Epidemic Models:
Modeling and Analysis, {\it  Commun. Stoch. Anal.}, {\bf 13} (2019),
 Article 8.

\bibitem{NY20} N. N. Nguyen, G. Yin, Stochastic partial differential equation models for spatially dependent predator-prey equations, {\it  Discrete Contin. Dyn. Syst. Ser. B}, {\bf 25} (2020),
117--139.


\bibitem{Nua06}  D. Nualart, {\it The Malliavin Calculus and related topics}, 2nd edition.
Springer-Verlag 2006.


\bibitem{PT93}  E. Pardoux, T. Zhang, Absolute continuity of the law of the solution
of a parabolic SPDE, {\it J. Functional Anal.}, {\bf 112} (1993), 447--458.

\bibitem{PZ95} S. Peszat, J. Zabczyk, Strong Feller property and irreducibility for diffusions on Hilbert spaces, {\it
Ann. Probab.}, {\bf 23} (1995), 157--172.

\bibitem{PR07} C. Pr\'ev\^ot, M. R\"ockner, {\it A Concise Course on Stochastic Partial Differential
Equations}, Lecture Notes in Mathematics, 1905. Springer, Berlin, 2007.



\bibitem{SBA11}
S. Schreiber, M. Bena\"im, K. Atchad, Persistence in fluctuating environments,
{\it J. Math. Biol.}, {\bf 62} (2011), 655--683.


\bibitem{Smi68} J. Smith,
{\it Mathematical Ideas in Biology}, Cambridge University Press, London, New York,
1968.


\bibitem{SLY19} P. Song,
Y. Lou,
Y. Xiao,
 A spatial SEIRS reaction-diffusion model in heterogeneous environment, {\it J. Differential Equations}, {\bf 267} (2019),
5084--5114.


\bibitem{Tan79} H. Tanaka,
Stochastic Differential Equations with Reflecting Boundary Conditions in Convex Regions, {\it Hiroshima Math. J.}, {\bf 9} (1979), 163--177.


\bibitem{TY15} K. Tran,
G. Yin,
Optimal harvesting strategies for stochastic competitive Lotka-Volterra ecosystems, {\it Automatica}, {\bf 55} (2015), 236--246.

\bibitem{TN20} T. Tuong, N. N. Nguyen, Characterization of long-term behavior of stochastic NP ecological model under regime switching. {\it Commun. Nonlinear Sci. Numer. Simul.} {\bf 93} (2021), 105497, 10 pp.

\bibitem{TNY20}  T. Tuong, N. N. Nguyen, G. Yin, Longtime behavior of a class of stochastic tumor-immune systems. {\it Systems Control Lett.} {\bf 146} (2020), 104806, 8 pp.

\bibitem{Wal86}
J.B. Walsh, \emph{An Introduction to Stochastic Partial Differential Equations},
In \'Ecole d\'et\'e de Probabilités de Saint-Flour, XIV—1984, volume
1180 of Lecture Notes in Math., pages 265-–439. Springer, Berlin, 1986.

\bibitem{WZ12} W. Wang, X.-Q. Zhao, Basic Reproduction Numbers for Reaction-Diffusion Epidemic Models,
{\it SIAM J. Appl. Dyn. Syst.}, {\bf 11} (2012),
1652--1673.



}
\end{thebibliography}
\end{document}